\renewcommand\eqref[1]{(\ref{#1})}
\newcommand*{\mint}[1]{%
  \mint@l{#1}{}%
}
\newcommand*{\mint@l}[2]{%
  \@ifnextchar\limits{%
    \mint@l{#1}%
  }{%
    \@ifnextchar\nolimits{%
      \mint@l{#1}%
    }{%
      \@ifnextchar\displaylimits{%
        \mint@l{#1}%
      }{%
        \mint@s{#2}{#1}%
      }%
    }%
  }%
}
\newcommand*{\mint@s}[2]{%
  \@ifnextchar_{%
    \mint@sub{#1}{#2}%
  }{%
    \@ifnextchar^{%
      \mint@sup{#1}{#2}%
    }{%
      \mint@{#1}{#2}{}{}%
    }%
  }%
}
\def\mint@sub#1#2_#3{%
  \@ifnextchar^{%
    \mint@sub@sup{#1}{#2}{#3}%
  }{%
    \mint@{#1}{#2}{#3}{}%
  }%
}
\def\mint@sup#1#2^#3{%
  \@ifnextchar_{%
    \mint@sup@sub{#1}{#2}{#3}%
  }{%
    \mint@{#1}{#2}{}{#3}%
  }%
}
\def\mint@sub@sup#1#2#3^#4{%
  \mint@{#1}{#2}{#3}{#4}%
}
\def\mint@sup@sub#1#2#3_#4{%
  \mint@{#1}{#2}{#4}{#3}%
}
\newcommand*{\mint@}[4]{%
  \mathop{}%
  \mkern-\thinmuskip
  \mathchoice{%
    \mint@@{#1}{#2}{#3}{#4}%
        \displaystyle\textstyle\scriptstyle
  }{%
    \mint@@{#1}{#2}{#3}{#4}%
        \textstyle\scriptstyle\scriptstyle
  }{%
    \mint@@{#1}{#2}{#3}{#4}%
        \scriptstyle\scriptscriptstyle\scriptscriptstyle
  }{%
    \mint@@{#1}{#2}{#3}{#4}%
        \scriptscriptstyle\scriptscriptstyle\scriptscriptstyle
  }%
  \mkern-\thinmuskip
  \int#1%
  \ifx\\#3\\\else_{#3}\fi
  \ifx\\#4\\\else^{#4}\fi
}
\newcommand*{\mint@@}[7]{%
  \begingroup
    \sbox0{$#5\int\m@th$}%
    \sbox2{$#5\int_{}\m@th$}%
    \dimen2=\wd0 %
    \let\mint@limits=#1\relax
    \ifx\mint@limits\relax
      \sbox4{$#5\int_{\kern1sp}^{\kern1sp}\m@th$}%
      \ifdim\wd4>\wd2 %
        \let\mint@limits=\nolimits
      \else
        \let\mint@limits=\limits
      \fi
    \fi
    \ifx\mint@limits\displaylimits
      \ifx#5\displaystyle
        \let\mint@limits=\limits
      \fi
    \fi
    \ifx\mint@limits\limits
      \sbox0{$#7#3\m@th$}%
      \sbox2{$#7#4\m@th$}%
      \ifdim\wd0>\dimen2 %
        \dimen2=\wd0 %
      \fi
      \ifdim\wd2>\dimen2 %
        \dimen2=\wd2 %
      \fi
    \fi
    \rlap{%
      $#5%
        \vcenter{%
          \hbox to\dimen2{%
            \hss
            $#6{#2}\m@th$%
            \hss
          }%
        }%
      $%
    }%
  \endgroup
}
\numberwithin{equation}{section}
\theoremstyle{plain}
\newtheorem{thm}{Theorem}[section]
\newtheorem{cor}[thm]{Corollary}
\newtheorem{lem}[thm]{Lemma}
\newtheorem{ob}[thm]{Observation}
\theoremstyle{definition}
\newtheorem{rem}[thm]{Remark}
\newcommand{\G}{\mathbb{G}}
\newcommand{\X}{\mathbb{X}}
\def\G{\mathbb{G}}
\def\H{\mathbb{H}^{n}}
\def\X{\mathbb{X}}
\def\L{\mathcal{L}_{p}}
\def\p{2^{*}_{\beta}}
\def\gnorm{\|g\|_{L^{2}(\H,\mu)}}
\def\G{{\mathbb G}}
\def\L{\mathcal{L}}
\def\Ls{\mathcal{L}_{s}}
\def\Lss{\mathcal{L}^{s}}
\def\Rn{\mathbb{R}^{n}}
\def\H{\mathbb{H}^{n}}
\def\na{\nabla_{\mathbb{H}^{n}}}
\def\tr{\text{tr}}
\title[Log-Sobolev, Hardy and Poincar\'e inequalities on the Heisenberg group]{Logarithmic Sobolev, Hardy and Poincar\'e inequalities on the Heisenberg group}
\author[M. Chatzakou]{Marianna Chatzakou}
\address{
	Marianna Chatzakou:
	\endgraf
    Department of Mathematics: Analysis, Logic and Discrete Mathematics
    \endgraf
    Ghent University, Belgium
  	\endgraf
	{\it E-mail address} {\rm marianna.chatzakou@ugent.be}
		}
\author[A. Kassymov]{Aidyn Kassymov}
\address{
  Aidyn Kassymov:
  \endgraf
  Institute of Mathematics and Mathematical Modeling
  \endgraf
  28 Shevchenko str.
  \endgraf
  050010 Almaty
  \endgraf
  Kazakhstan
  \endgraf
	{\it E-mail address}  {\rm kassymov@math.kz}}
\author[M. Ruzhansky]{Michael Ruzhansky}
\address{
  Michael Ruzhansky:
  \endgraf
  Department of Mathematics: Analysis, Logic and Discrete Mathematics
  \endgraf
  Ghent University, Belgium
  \endgraf
 and
  \endgraf
  School of Mathematical Sciences
  \endgraf
  Queen Mary University of London
  \endgraf
  United Kingdom
  \endgraf
  {\it E-mail address} {\rm michael.ruzhansky@ugent.be}
  }
\begin{document}

\thanks{The authors are supported by the FWO Odysseus 1 grant G.0H94.18N: Analysis and Partial Differential Equations and by the Methusalem programme of the Ghent University Special Research Fund (BOF) (Grant number 01M01021). Marianna Chatzakou is a postdoctoral fellow of the Research Foundation – Flanders (FWO) under the postdoctoral grant No 12B1223N. Michael Ruzhansky is also supported by EPSRC grants EP/R003025/2 and EP/V005529/1, and Aidyn Kassymov by the MESRK grant AP14869275. \\
\indent
{\it Keywords:} log-Sobolev inequality; log-Gagliardo-Nirenberg inequality;  Nash inequality; Gross inequality; Poincar\'e inequality; Heisenberg group; fractional operator.}

\begin{abstract} 
In this paper we first prove a number of important inequalities with explicit constants in the setting of the Heisenberg group $\H$. This includes the fractional and integer Sobolev, Gagliardo-Nirenberg, (weighted) Hardy-Sobolev,  Nash inequalities, and their logarithmic versions. In the case of the first order Sobolev inequality, our constant recovers the sharp constant of Jerison and Lee. Remarkably, we also establish the analogue of the Gross inequality with a semi-probability measure on $\H$ that allows- as it happens in the Euclidean setting- an extension to infinite dimensions, and particularly can be regarded as an inequality on the infinite dimensional $\mathbb{H}^{\infty}$. Finally, we prove the so called generalised Poincar\'e inequality on $\H$ both with respect to the aforementioned semi-probability measure and the Haar measure on $\H$, also with explicit constants.
\end{abstract}

\maketitle

\tableofcontents

\section{Introduction and main results}
One of the aims of this work is to obtain the fractional, logarithmic and in some cases weighted versions of a family of well-studied inequalities such as the Sobolev, Hardy-Sobolev, Hardy, and Gagliardo-Nirenberg inequalities  in the setting of the Heisenberg group $\mathbb{H}^n$. We put an emphasis on establishing there inequalities with explicit constants. It is true that even though a lot of work has been done in order to prove and extend these inequalities in the Euclidean setting, regarding their consideration in the sub-Riemannian setting there is still a lot to be done. Exceptional to this, we shall refer to the important works of Frank and Lieb \cite{FL12} and Roncal and Thangavelu \cite{RT16} where the authors prove several sharp inequalities in the setting of the Heisenberg group $\H$. 

Additionally, in the last section we prove the so-called generalised Poincar\'e inequality (see \cite{Bec98}) in the regarded setting, with respect to two different and essential for our considerations measures on $\H$. The versions of the Poincar\'e inequalities proven are both global; integration is regarded on the whole of $\H$, and local; integration is regarded on balls in $\H$.

One of the main contributions of the current work is that the involved constants in the aforementioned inequalities are explicit. In this respect, the current work can be served as a special case of the works \cite{CKR21c,CKR21b, CKR21a} in the sense that in these works the authors established similar type inequalities for general or graded/stratified Lie groups including just representations of the appearing constants, but also an extension of the latter works since the current work contains several Poincar\'e types inequalities. The importance of knowing explicitly the appearing constants is self-evident in view of the applications of these inequalities in PDEs, variational calculus, differential geometry, and other branches of mathematics as discussed in more details in the sequel. Additionally to that, having control over the constants is necessary in view of the possibly of extending the consideration of specific inequalities to infinite-dimensional objects like, in this case, the infinite dimensional Heisenberg groups $\mathbb{H}^{\infty}$, see e.g. the works \cite{DM08} and  \cite{BGM13} on the setting $\mathbb{H}^{\infty}$, when equipped with a suitable probability measure. Details of this consideration are given below in \ref{iii}.

On the other hand, it is known, see e.g \cite{BZ05}, \cite{ABD07}, \cite{DV12}, that there is a strong link between the logarithmic-Sobolev inequalities and the Poincar\'e inequalities. Here we establish several versions of the Poincar\'e inequalities in the setting of the Heisenberg group $\H$, where either the Haar measure is involved, or a semi-probability measure that reads as the natural extension of the Gaussian measure on $\mathbb{R}^n$. This appears in the analogue of the logarithmic Sobolev inequality in the setting of $\H$ that is proven in the current work. 

Hence the stimulus behind this work is mainly based on the following elements:
\begin{enumerate} [label=\roman*.]
    \item \label{a} Logarithmic, or more generally coercive inequalities, are strongly linked  other inequalities and  properties of the related semigroups; see e.g. \cite{AB00, GZ03,BH99, Car91,HZ10};
\item   \label{b} Particularly, the log-Sobolev inequality has links to measuring uncertainty, c.f. \cite{Bec95,Bec00}, and to blow-up results on certain PDEs even in the quite general setting of homogeneous Lie groups, c.f. \cite{KRS20}; Nash inequality considered in the setting of Markov semigroups can be used to proving estimates for the related heat kernel; Hardy inequality can be used to  proving stability of the relativistic matter \cite{FLS08}; and Gagliardo-Nirenberg inequalities (also known as ``interpolation theorem'' in nonlinear problems) can be used to proving estimates for the study of nonlinear evolution equations, c.f. \cite{FNQ18};
\item \label{iii}  While \ref{a} and \ref{b} explain the general motivation behind our analysis, here we obtain explicit expression of the appearing constants, and having good control of these constants allows one to pass to infinite dimensions.  In particular, this means that we can still consider the Gross-type ``semi-Gaussian'' inequality with the Gaussian measure on the first stratum of the group, see Theorem \ref{semi-g}, on an object like $\mathbb{H}^{\infty}$ with the infinite-dimensional first stratum and a $1$-dimensional center. This relies on the  appearing normalisation constant $\gamma$, see Remark \ref{mainrem}, that essentially allows one to interpret the measure as the probability measure on the whole $\mathbb{H}^{\infty}$ as it happens in the Euclidean setting. Hence it becomes consequential to study properties of the analogues of the Ornstein–Uhlenbeck operator on $\mathbb{H}^{\infty}$, see e.g. \cite{Pic10}. The last realisation unlocks new investigation quests, eventually including the direction of stochastic analysis on $\mathbb{H}^{\infty}$. 
\item  \label{iv} Sobolev and Poincar\'e iequalities are closely related \cite{LR04}. The Poincar\'e inequality is the main protagonist in the domain of coercive inequalities and isoperimetry; see \cite{BZ05} for a general overview. In the case of the global Poincar\'e inequalities with respect to a probability measure very little is known even in the simple setting of the Heisenberg group $\H$, see e.g. \cite{CFZ21}, \cite{DZ22}, \cite{HZ10}. On the other hand, Poincar\'e inequalities on a bounded domain in the setting of nilpotent Lie groups are known only with respect to the horizontal gradient on the group, see \cite{Jer86}.
\end{enumerate}

Next, we give a short presentation of the main results of this work, sometimes combined with some additional comments about related works and approaches. Before doing so, let us note that in the sequel $\H $ stands for the Heisenberg group, and $Q=2n+2$ stands for the homogeneous dimension of $\H$. We also assume that for the parameter $s$ we have $s \in (0,1]$.\footnote{While the definition of the modified sub-Laplacian $\mathcal{L}_s$ is given for $s\in (0,1)$, here we allow the parameter $s$ to also take the extreme value $1$ since in this case $\mathcal{L}^s$ boils down to the sub-Laplacian itself, and this consideration makes sense in our setting.} 

\begin{itemize}
\item{\textbf{The (fractional) (log-)Sobolev inequality on $\H$}:}  The works \cite{Bec12, KRS20, KS20} contain functional inequalities, in some cases of Sobolev type, that involve powers of the sub-Laplacian. Our version of the fractional Sobolev inequality as appears in Theorem \ref{thm1} involves either the modified fractional operator $\mathcal{L}_s$ given by \eqref{Ls.Thang} (introduced by Roncal and Thangavelu in \cite{RT16} that in the case where $s=1$ boils down to the sub-Laplacian on $\H$) or fractional powers of the sub-Laplacian $\Lss$ and the first reads as follows:
    \begin{equation}\label{into1}
        \|f\|^{2}_{L^{\frac{2Q}{Q-2s}}(\H)}\leq C_{B,s}\langle \Ls f,f\rangle_{L^{2}(\H)},
    \end{equation}
where $C_{B,s}$ is given in \eqref{bestfrac}. The fractional Sobolev inequality that involves $\Lss$ is the inequality \eqref{2b} in Theorem \ref{thm1}. Inequality \eqref{into1} essentially extends the sharp inequality of Jerison and Lee in \cite{JL88} in the sense that when $s=1$, i.e. when $\mathcal{L}_s$ is simply the sub-Laplacian on $\H$, inequality \eqref{into1} coincides with the latter inequality, see also Remark \ref{remJL}. It is important to note that when combining the latter with the logarithmic H\"older inequality as in Lemma \ref{holder}, then the resulting inequality gives as a special case for $s=1$ the following so called Log-Sobolev inequality on $\H$:
  \begin{equation}\label{LogSobolevint.intro}
\int_{\H}\frac{|u|^{2}}{\|u\|^{2}_{L^{2}(\H)}}\log\left(\frac{|u|^{2}}{\|u\|^{2}_{L^{2}(\H)}}\right) dx \leq \frac{Q}{2}\log\left(C_{B,1}\|\nabla_{\H}u\|^{2}_{L^{2}(\H)}\right),
\end{equation}
where the constant  $C_{B,1}$ is given explicitly. A generalisation of the log-Sobolev inequality is its weighted version and is shown in Theorem \ref{wwsobingrthm}. 
   
\item{\textbf{The (log-)fractional Gagliardo-Nirenberg inequality on $\H$}:} As one sees in the seminal work of Brezis and Mironescu \cite{BM18} on the history of Gagliardo-Nirenberg inequality, not much attention was paid until recently to the determination of the appearing constant even in the simple case of $\mathbb{R}^n$. In 2020 in the work \cite{RTY20} the authors prove the Gagliardo-Nirenberg inequality on graded groups with a constant that cannot (apart from the trivial cases of $\mathbb{R}^n$) be explicitly computed. Here, in Theorem \ref{GN4},  in the case of $\H$ we obtain:
\begin{equation}\label{into.GN}
 \int_{\H}|u(x)|^{q}dx\leq C_{GN,s_1,s_2}\langle\mathcal{L}_{s_{1}}u,u\rangle_{L^{2}(\H)}^{\frac{Q(q-2)-2s_{2}q}{4(s_{1}-s_{2})}}\langle\mathcal{L}_{s_{2}}u,u\rangle_{L^{2}(\H)}^{\frac{s_{1}(q-2)-Q(q-2)}{4(s_{1}-s_{2})}},
\end{equation}
    where $C_{GN,s_1,s_2}=C_{B,s_1}^{\frac{qa}{2}} C_{B,s_2}^{\frac{q(1-a)}{2}}$ for $C_{B,s_i}$, $i=1,2$, explicitly given in \eqref{bestfrac}, whenever $1>s_1>s_2 \geq 0$, and $\frac{2Q}{Q-2s_2}\leq q \leq \frac{2Q}{Q-2s_1}$. In principle, the Gagliardo-Nirenberg inequality on $\H$ in its general form is not sharp. However, in the case where $s_1=1$ and $s_2=0$ inequality \eqref{into.GN} is sharp, see Remark \ref{rem.GN}. Logarithmic versions of the general form of \eqref{into.GN} are given in Theorem \ref{thmlogGN}.
\item {\textbf{(log-)fractional Hardy--Sobolev inequality on $\H$:}} The (log-)Sobolev type inequalities with weights are sometimes called (log-)Hardy inequalities. The fractional Hardy inequality on $\H$ is shown in \cite{RT16} by Roncal and Thangavelu. Here in Theorem \ref{FHS} we prove the log-Hardy-Sobolev inequality on $\H$, which is for $s \in (0,1)$, $2^{*}_{\beta}=\frac{2(Q-\beta)}{Q-2s}$, and $0< \beta <2s$ the following inequality   
      \begin{equation*}
  \begin{split}
       &\int_{\H}\frac{|x|^{-\frac{2\beta}{2^{*}_{\beta}}}|u(x)|^{2}}{\||\cdot|^{-\frac{2\beta}{2^{*}_{\beta}}}u\|^{2}_{L^{2}(\H)}}\log\left(\frac{|x|^{-\frac{2\beta}{2^{*}_{\beta}}}|u(x)|^{2}}{\||\cdot|^{-\frac{2\beta}{2^{*}_{\beta}}}u\|^{2}_{L^{2}(\H)}}\right) dx\\&
       \leq \frac{Q-\beta}{2s-\beta}\int_{\H}\frac{|u|^{2}}{|x|^{2s-\beta}}dx\log\left(\left(C^{\frac{\beta}{2s}}_{BH,s}(C_{B,s}\|U_{s}\|_{\text{op}})^{\frac{n+1}{n+1-s}\frac{2s-\beta}{2s}}\right)^{\frac{2}{2^{*}_{\beta}}}\frac{\langle \Lss u,u\rangle_{L^{2}(\H)}}{\||\cdot|^{-\frac{2\beta}{2^{*}_{\beta}}}u\|^{2}_{L^{2}(\H)}}\right)\,,
   \end{split}
\end{equation*}
  where $\|U_{s}\|_{\text{op}}$ stands for the norm of the operator $U_s$ defined in \eqref{Us}. The latter inequality, see the discussion given after Theorem \ref{FHS}, gives under suitable choices of $\beta$ both the Hardy and the Sobolev inequalities. The fractional log-Hardy inequality on $\H$ is given in Theorem \ref{THM:weHarnon}.

   
\item{\textbf{The Nash inequality and its application to heat equation on $\H$}:} Nash inequality was shown by Nash \cite{Nas58} in the Euclidean setting. The best-appearing constant was determined by Carlen and Loss in \cite{CL} a few years later. In their monograph Varopoulos, Salof-Coste and Coulhon \cite{VSCC93} show Nash inequality in the form \begin{equation}
\label{intro.VSDD}\|u\|^{2+\frac{4}{Q}}_{L^2(\H)}\leq C \|u\|^{\frac{4}{Q}}_{L^1(\H)}\|\nabla_{\H}u\|^{2}_{L^2(\H)}\,,
\end{equation} for some $C>0$. Here we find an explicit formula for $C$ in \eqref{intro.VSDD} which, as it happens on $\mathbb{R}^n$ depends only on the dimension $n$ of the first stratum, and consequently only on $Q$, and generalises \eqref{intro.VSDD} in the following sense:

\begin{equation}\label{intro.Nash}
 \|u\|_{L^2(\H)}^{2+\frac{4s}{Q}} \leq C_{B,s}\|U\|_{\rm op} \|u\|_{L^1(\H)}^{\frac{4s}{Q}}\langle\Lss u,u\rangle_{L^{2}(\H)}\,,
 \end{equation}
 see Theorem \ref{THM:Nash}. The modified fractional version of Nash inequality is given in \eqref{Nash.g}. As before, the inequality \eqref{intro.Nash} for $s=1$,  boils down to the inequality \eqref{intro.VSDD} with an explicit constant. An application of Nash inequality for the time decay of the solution to the heat equation with respect to the sub-Laplacian $\mathcal{L}$ on $\H$ is given in Corollary \ref{cor:par}. \item{\textbf{The Gross-type log-Sobolev inequality on $\H$}:}
 Gross in \cite{Gro75} proved the logarithmic Sobolev inequality on $\mathbb{R}^n$ with respect to the Gaussian measure $d\mu(x)=(2\pi)^{-\frac{n}{2}}e^{-\frac{|x|^2}{2}}dx$ of the form:
 \begin{equation}\label{EQ:Gross}
    \int_{\mathbb{R}^{n}}|u(x)|^{2}\log  \left(\frac{|u(x)|}{\|u\|_{L^{2}(\mathbb{R}^{n},\mu)}}\right)d\mu(x)
    \leq  \int_{\mathbb{R}^{n}}|\nabla u(x)|^{2}d\mu(x)\,,
\end{equation}
with the idea of extending it to infinite dimensions. Indeed, the appearing normalisation constant $(2\pi)^{-n}$ allows for such a consideration. Gross's inequality is linked to  ultracontractivity and hypercontractivity properties of the corresponding Markovian semigroups, see e.g. \cite{Ad79, Tos97}, uncertainty principles \cite{Bec95} and Poincar\'e inequalities \cite{Bec89}.
In Theorem \ref{semi-g}  we prove the following analogue of Gross's inequality on $\H$ which we shall call the ``semi-Gaussian'' log-Sobolev (or Gross type) inequality on $\H$ since the appearing measure is Gaussian on the first stratum. The latter inequality is of the form 
\begin{equation}\label{intro:semi-g}
    \int_{\H}|g|^2\log|g|\,d\mu \leq \int_{\H} |\nabla_{\H}g|^2\,d\mu\,,
\end{equation}
for any $g$ such that $\|g\|_{L^2(\H,\mu)}=1,$
 where $\mu=\mu_1 \otimes \mu_2$, and $\mu_1$ is the Gaussian measure on $\mathbb{R}^{2n}$ given by $d\mu_1=\gamma e^{-\frac{|\xi|^2}{2}}d\xi$, for $\xi\in \mathbb{R}^{2n}$, and $|\xi|$ being the Euclidean norm of $\xi$, where 
 \begin{equation*}
     \gamma:=n!\left(\frac{(n+1)e^{\frac{2n}{n+1}{-1}}}{2\pi n^{2}}\right)^{n+1}\,,
 \end{equation*}
 where  $\mu_2$ is the Lebesgue measure $d\tau$ on $\mathbb{R}$, where we have denoted by $x=(\xi,\tau) \in \mathbb{R}^{2n} \times \mathbb{R}$ an element in $\H$. As it happens on $\mathbb{R}^n$, the latter inequality is actually equivalent to \eqref{LogSobolevint.intro}; see Theorem \ref{equiv.thm}. Let us underline that $\gamma$ for large $n$ (and so also for large $Q$) is uniformly bounded, see Observation \ref{mainrem}, and most importantly, in this limiting case, the appearing probability measure on the first stratum of $\H$ becomes identical to the Gaussian measure on $\mathbb{R}^{2n}$. Hence, the idea of passing to infinite dimensions is justified in the case of $\H$ by this expression.
 \item{\textbf{Poincar\'e inequalities on $\H$}:} Regarding Poincar\'e inequalities on the whole of $\H$, in Theorem \ref{thm.poi.H}, we prove the generalised Poincar\'e inequality, see e.g. \cite{Bec89}, with respect to the semi-Gaussian measure $\mu$ as in \eqref{intro:semi-g}. The latter reads as follows
 \[
\int_{\H}|g|^2\,d\mu-\left(\int_{\H}|g|^p\,d\mu \right)^{\frac{2}{p}}\leq  \frac{2(2-p)}{p} \int_{\H}|\nabla_{\H}g|^2\,d\mu\,,
 \]
 for $p<2$. Later on, in Corollary \ref{cor.jer}, we deal with the Poincar\'e inequality on bounded domains in $\H$, and in particular we give an upper bound when $p=2$ for the term 
 \[
 \int_{B_r(y)}|g-g_{B_r(y)}|^p\,dx 
 \]
 appearing in Jerison's Poincar\'e inequality \cite{Jer86} in terms of the infimum over $s\in (0,1]$ of the quantity 
 \[
 C_{B,s}\frac{Q}{2s} \langle \mathcal{L}_sg,g \rangle_{L^2(B_r(y))}\,,
 \]
 where $B_r(y)$ is the ball of radius $r$ with center $y$ with respect to the homogeneous quasi-norm on $\H$ which satisfies $\text{Vol}(B_r(y))\leq 1$, and where $C_{B,s}$ depends only on $s$ and is given in \eqref{bestfrac}.
\end{itemize}

Actually, Bakry-Emery conditions give the conditions on (the full) probability measures under which logarithmic inequalities are satisfied; see \cite{GZ03}. Such an approach is not part of the current work and will be addressed in the sequel. In this spirit let us mention an important result by Hebisch and Zegarlinski in \cite{HZ10} that in our case can be viewed as follows:  there is no probability measure of the form $e^{-c\rho^2}dx$ with $\rho$ being a smooth homogeneous quasi-norm on $\H$ that gives rise to a Gross-inequality of the form  \eqref{intro:semi-g}.

There will be a  subsequent work that initiates from the current one, as mentioned earlier in \ref{iii}, with the study of the properties of the corresponding Ornstein–Uhlenbeck operator on $\H$ and on $\mathbb{H}^{\infty}$. Recall that the Gross inequality \eqref{EQ:Gross} on $\mathbb{R}^n$ is corresponding essentially to a reformulation of the hypercontractivity of the associated Markovian semigroup; see e.g. \cite{BE85}.

\section{Preliminaries}
In  this  section,  we  briefly  recall  definitions and  main  properties  of  the Heisenberg group $\H$ as well as some notions and ideas from \cite{RT16} and \cite{FL12} that are useful for the subsequent analysis. 
Let us note that the comprehensive study on stratified groups, that include $\H$ as a special case, has been initiated in the works of Folland and Stein \cite{FS}. However, in our exposition below we chose to follow a more recent presentation of  the open access book \cite{FR16}. 

The Heisenberg group $\H$ can be viewed via the identification of manifolds $\mathbb{H}^{n}:=(\mathbb{C}^{n}\times \mathbb{R}, \circ),\,n\in\mathbb N$, under the following group law
\begin{equation*}
\begin{split}
(\xi,\tau)\circ(\tilde{\xi},\tilde{\tau})=\left(\xi+\tilde{\xi},\tau+\tilde{\tau}+\frac{1}{2}\text{Im}(\xi\cdot\tilde{\xi})\right),
\end{split}
\end{equation*}
 where $\xi,\tilde{\xi}\in \mathbb{C}^{n}$ and $\tau, \tilde{\tau}\in \mathbb{R}$.
The family of dilations for $z=(\xi,\tau)\in\H$ has the following form
\begin{equation*}
\delta_{\lambda}(z):=(\lambda \xi, \lambda^{2}\tau),\,\,\,\,\, \lambda>0,\,\,\,\,\xi=\xi'+\text{i}\overline{\xi},
\end{equation*}
yielding also the homogeneous dimension $Q$  of $\mathbb{H}^{n}$ which is computed as $Q=2n+2$. By definition, the topological dimension is $2n+1$. The Kaplan norm on $\H$ is for $z=(\xi,\tau)\in \H$ given by
\begin{equation}
    |z|=\left(|\xi|^{4}+16\tau^{2}\right)^{\frac{1}{4}}\,.
\end{equation}

The Lie algebra $\mathfrak{h}$ of the left-invariant vector fields on the Heisenberg group $\mathbb{H}^{n}$ is spanned by the vector fields
$$
X_{i}=\frac{\partial}{\partial\xi'_{i}}+\frac{1}{2}\overline{\xi}_{i}\frac{\partial}{\partial \tau}, \,\,\, Y_{i}=\frac{\partial}{\partial \overline{\xi}_{i}}-\frac{1}{2}\xi'_{i}\frac{\partial}{\partial \tau},\,\,\xi_{i}=\xi'_{i}+\text{i}\overline{\xi}_{i},
$$
for $i=1,\ldots,n,$ having as the (only) non-zero commutator the vector field
$$T=[X_{i},Y_{i}]=-\partial_{\tau},\,\,\,\,\,i=1,\ldots,n.$$
By the general theory, the horizontal gradient on $\mathbb{H}^{n}$ is then given by
$$\nabla_{\H}:=(X_{1},\ldots,X_{n},Y_{1},\ldots,Y_{n})\,,$$
while the positive sub-Laplacian on $\H$ is of the form
$$\L:=-\na^{*}\cdot\na=-\sum_{i=1}^{n}\left(X_{i}^{2}+Y_{i}^{2}\right).$$

Each $\lambda \in \mathbb{R}^{*}=\mathbb{R}\setminus \{0\}$ gives rise to an irreducible unitary representation $\pi_{\lambda}$ of $\H$ that is realised on $L^{2}(\mathbb{R}^{n})$ via the action 
$$\pi_{\lambda} (\xi,\tau)\varphi(t)=e^{\text{i}\lambda\tau}e^{\text{i}(\xi'\cdot t+\frac{1}{2}\xi'\cdot \overline{\xi})}\varphi(t+\overline{\xi}),$$
where $\xi=\xi'+\text{i}\overline{\xi}$.

The group Fourier  transform $\widehat{f}(\lambda)$ of a function $f \in L^1(\H)$ is the operator-valued function defined,
for each $\lambda \in \mathbb{R}^{*}$, by 
\begin{equation}
   \widehat{f}(\lambda):=\pi_{\lambda}(f)=\int_{\H}f(\xi,\tau)\pi_{\lambda}(\xi,\tau)^{*}d\xi d\tau. 
\end{equation}
The Parseval identity on $\H$ reads as follows
\begin{equation}\label{parseval}
    \int_{\H}f(x)\overline{g(x)}dx=\int_{\H}f(\xi,\tau)\overline{g(\xi,\tau)}d\xi d\tau=\frac{2^{n-1}}{\pi^{n+1}}\int_{\mathbb{R}^{*}}\text{tr}(\widehat{f}(\lambda)\widehat{g}(\lambda)^{*})|\lambda|^{n}d\lambda,
\end{equation}
where $\tr(\widehat{f}(\lambda)\widehat{g}(\lambda)^{*})$ stands for the trace of the operator $\widehat{f}(\lambda)\widehat{g}(\lambda)^{*}$. 
By taking $f=g$ in \eqref{parseval}, the latter boils down to the Plancherel identity on $\H$, i.e, we have
\begin{equation*}
    \int_{\H}|f(x)|^{2}dx=\int_{\H}|f(\xi,\tau)|^{2}d\xi d\tau=\frac{2^{n-1}}{\pi^{n+1}}\int_{\mathbb{R}^{*}}\|\widehat{f}(\lambda)\|^{2}_{\text{HS}}|\lambda|^{n}d\lambda,
\end{equation*}
where $\|\hat{f}(\lambda)\|_{\text{HS}}=\tr(\widehat{f}(\lambda)\widehat{f}(\lambda)^{*})^{\frac{1}{2}}$ is the Hilbert-Schmidt norm of $\widehat{f}(\lambda)$. 

Recall that that group Fourier transform of the sub-Laplacian $\mathcal{L}$ on $\H$ is the harmonic oscillator on $\mathbb{R}^n$;  that is we have
\begin{equation*}
   \pi_{\lambda}(\L)=\sigma_{\L}(\lambda)=|\lambda|\left(-\sum_{k=1}^{n}\partial^{2}_{w_{j}}+\sum_{k=1}^{n}w_{j}^{2}\right)\,,\quad w \in \mathbb{R}^n\,.
\end{equation*}
Next, we recall the spectral decomposition of the fractional powers of the sub-Laplacian as in \cite{RT16}. To this end, let us introduce the following notation: 
we shall denote by $f^{\lambda}$ the inverse Fourier transform of $f$ in the central variable $\tau$, that is
\begin{equation*}
    f^{\lambda}(\xi):=\int_{-\infty}^{+\infty}f(\xi,\tau)e^{\text{i}\lambda \tau}d\tau\,.
\end{equation*}
The following functions on $\mathbb{R}^n$ are called in \cite{RT16} the scaled Laguerre functions of type $(n - 1)$:
\begin{equation*}
    \varphi_{k}^{\lambda}(\xi)=L^{n-1}_{k}\left(\frac{|\lambda||\xi|^{2}}{2}\right)e^{-\frac{|\lambda||\xi|^{2}}{4}},
\end{equation*}
where $L_{k}^{n-1}$ is the Laguerre polynomial of type $(n-1)$. 
Using the above notation, the spectral decomposition of $\mathcal{L}$ was described in \cite{RT16} to be given by
\begin{equation*}
    \L u(\xi,\tau)=(2\pi)^{n-1}\int_{-\infty}^{+\infty}\left(\sum_{k=0}^{\infty}(2k+n)|\lambda|f^{\lambda}\ast_{\lambda}\varphi_{k}^{\lambda}(\xi)\right)e^{-i\lambda\tau}|\lambda|^{n}d\lambda,
\end{equation*}
where $*_{\lambda}$ is called the $\lambda$-twisted convolution and is defined via $f^{\lambda} \ast_{\lambda} g^{\lambda}:=(f \ast g)^{\lambda}$. Hence, under the above notation, a natural way to extend the latter to define fractional powers of the sub-Laplacian is via the following formula:
\begin{equation*}
    \Lss u(\xi,\tau)=(2\pi)^{n-1}\int_{-\infty}^{+\infty}\left(\sum_{k=0}^{\infty}((2k+n)|\lambda|)^{s}f^{\lambda}\ast_{\lambda}\varphi_{k}^{\lambda}(\xi)\right)e^{-i\lambda\tau}|\lambda|^{n}d\lambda.
\end{equation*}
By $\Ls$, $s\in (0,1)$, Roncal and Thangavelu in \cite{RT16} denoted the operator that they call the modified fractional power (of $\mathcal{L}$):
\begin{equation}\label{Ls.Thang}
    \Ls:=|2T|^{s}\frac{\Gamma\left(\frac{\L}{2|T|}+\frac{1+s}{2}\right)}{\Gamma\left(\frac{\L}{2|T|}+\frac{1-s}{2}\right)},
\end{equation}
where $T=-\frac{\partial}{\partial \tau}$. The authors also observe that $\Ls$ corresponds to the following spectral multiplier: 
\begin{equation*}
    (2|\lambda|)^{s}\frac{\Gamma\left(\frac{2k+n}{2}+\frac{1+s}{2}\right)}{\Gamma\left(\frac{2k+n}{2}+\frac{1-s}{2}\right)},\,\,\,\,k\in\mathbb{N},
\end{equation*}
with the group Fourier transform of the operator $\Ls$ denoted by 
\begin{equation*}
    \sigma_{\Ls}(\lambda)=\pi_{\lambda}(\Ls).
\end{equation*}
In \cite{CH89} the authors found the fundamental solution of $\Ls$. In particular, we have 
\begin{equation*}
    (\Ls^{-1}\delta_{0})(x)=a_{s}|x|^{-Q+2s},\,\,\,\,\,a_{s}=\frac{2^{n+1-3s}\Gamma^{2}\left(\frac{Q-2s}{4}\right)}{\pi^{n+1}\Gamma(s)},
\end{equation*}
where $\delta_{0}$ denotes a Dirac delta at the point $0$. It is easy to see that $\mathcal{L}_{1}=\mathcal{L}$, hence by the above the explicit representation of the fundamental solution of $\L$ is given by $\frac{2^{n-2}\Gamma^{2}\left(\frac{n}{2}\right)}{\pi^{n+1}}|x|^{-Q+2}$; see also Folland and Stein \cite{FS74}.

Since for our analysis, we are referring to several results from the work \cite{FL12} of Frank and Lieb, let us see how the above operators become under the group law on $\H$ that was adopted there. We have in \cite{FL12}, 

\begin{equation*}
\begin{split}
(\xi,\tau) \circ_{1} (\tilde{\xi},\tilde{\tau})=(\xi+\tilde{\xi},\tau+\tilde{\tau}+2\text{Im}(\xi\cdot\tilde{\xi})),\,\,(\xi,\tau)\in\mathbb{C}^{n}\times\mathbb{R}.
\end{split}
\end{equation*}
Consequently, the vector fields (with $\mathbb{C}^{n}\ni\xi=\xi'+\text{i}\overline{\xi}$) take the form
$$
\widetilde{X}_{i}=\frac{\partial}{\partial\xi'_{i}}+2\overline{\xi}_{i}\frac{\partial}{\partial \tau}, \,\,\, \widetilde{Y}_{i}=\frac{\partial}{\partial\overline{\xi}_{i}}-2\xi'_{i}\frac{\partial}{\partial \tau},\,\,\,\,i=1,\ldots,n,
$$
while the Kaplan distance of $x$, in this case denoted by $|x|_1$, takes the following form
\begin{equation}\label{fln}
    |x|^{4}_{1}:=|\xi|^{4}+\tau^{2},\,\,\,\,\,\,x=(\xi,\tau)\in\H.
\end{equation}
Hence, for the sub-Laplacian, now denoted by $\tilde{\L}$, we have
$$\tilde{\mathcal{L}}=-\frac{1}{4}\sum_{i=1}^{n}\left(\tilde{X}_{i}^{2}+\tilde{Y}_{i}^{2}\right)\,.$$
The analogue of the modified fractional operator $\mathcal{L}_{s}$ with respect to the sub-Laplacian is given by
\begin{equation*}
    \widetilde{\mathcal{L}}_{s}:=|2\widetilde{T}|^{s}\frac{\Gamma\left(\frac{\widetilde{\L}}{|2\widetilde{T}|}+\frac{1+s}{2}\right)}{\Gamma\left(\frac{\widetilde{\L}}{|2\widetilde{T}|}+\frac{1-s}{2}\right)},
\end{equation*}
 where $\widetilde{T}=\frac{\partial}{\partial \tau}$. As in \cite{C82} the representation of the fundamental solution in this case is given by 
\begin{equation}\label{bs}
    \tilde{\mathcal{L}}_{s}^{-1}\delta_{0}=b_{s}|x|_{1}^{2s-Q},\,\,\,\,\,b_{s}=\frac{2^{n-1-s}\Gamma^{2}\left(\frac{Q-2s}{4}\right)}{\pi^{n+1}\Gamma(s)},
\end{equation}
where $\delta_{0}$ is the Dirac delta  at the point $0.$

To show the relation between $\Ls$ and $\tilde{\Ls}$, let us take $u(\xi,\tau)=v(2\xi,\tau)$. Then we have
\[\tilde{\mathcal{L}}u(\xi,\tau)=(\mathcal{L}v)(2\xi,\tau)\,,\]
and consequently also

$$\tilde{\mathcal{L}_{s}}u(\xi,\tau)=(\mathcal{L}_{s}v)(2\xi,\tau)\,,$$
where the last equality implies that

\begin{equation}\label{dil}
    \langle\tilde{\mathcal{L}_{s}}u,u\rangle_{L^{2}(\H)}=2^{-2n}\langle\mathcal{L}_{s}v,v\rangle_{L^{2}(\H)}.
\end{equation}

The auxiliary operator $U_s$, $s \in (0,1)$, was introduced in \cite{RT16} as being the following composition of fractional sub-Laplacians resulting in a bounded operator
\begin{equation}\label{Us}
    U_{s}=\Ls(\Lss)^{-1}\,.
\end{equation}
Its norm was computed there as 
\begin{equation}\label{Us.op.norm}
    \|U_{s}\|_{\rm op}=\sup_{k\geq0}\left(\frac{2k+n}{2}\right)^{-s}\frac{\Gamma\left(\frac{2k+n}{2}+\frac{1+s}{2}\right)}{\Gamma\left(\frac{2k+n}{2}+\frac{1-s}{2}\right)}.
\end{equation}
Giving an upper bound for $\|U_{s}\|_{\text{op}}$ that is convenient for our purposes and more explicit, and actually improves the one given in Subsection 5.3 in \cite{RT16}, requires the Wendel inequality for the ratio of Gamma functions, see \cite[Formula (7)]{W48}, which reads as follows
\begin{equation*}
    \frac{\Gamma(x+s)}{\Gamma(x)}\leq x^{s},\,\,\,\,\,\,x>0\,\,\,\,\text{and}\,\,\,\,\,s\in(0,1).
\end{equation*}
Choosing $x=\frac{2k+n+1-s}{2}$ and $s\in(0,1)$, we get
\begin{equation*}
    \begin{split}
        \left(\frac{2k+n}{2}\right)^{-s}\frac{\Gamma\left(\frac{2k+n}{2}+\frac{1+s}{2}\right)}{\Gamma\left(\frac{2k+n}{2}+\frac{1-s}{2}\right)}&=\left(\frac{2k+n}{2}\right)^{-s}\frac{\Gamma\left(\frac{2k+n+1-s}{2}+s\right)}{\Gamma\left(\frac{2k+n+1-s}{2}\right)}\\&
        \leq\left(\frac{2k+n}{2}\right)^{-s}\left(\frac{2k+n+1-s}{2}\right)^{s}\\&
        =\left(1+\frac{1-s}{2k+n}\right)^{s}\\&
        \leq \left(\frac{n+1-s}{n}\right)^{s}.
    \end{split}
\end{equation*}
A combination of the latter computations together with \eqref{Us.op.norm} yields
\begin{equation}\label{est1}
    \|U_{s}\|_{\text{op}}\leq \left(\frac{n+1-s}{n}\right)^{s}.
\end{equation}

In \cite{RT16} the authors also considered the auxiliary bounded operator $V_{s}=\mathcal{L}^{-1}_{1-s}\mathcal{L}\mathcal{L}^{-s}$, for $s \in (0,1)$. By the properties of the gamma function, one has the following estimate for the norm of $V_s$:
\begin{equation}\label{est2}
    \|V_{s}\|_{\text{op}}:=\|\mathcal{L}^{-1}_{1-s}\mathcal{L}\mathcal{L}^{-s}\|_{\text{op}}\leq \frac{n+2-s}{n+s}\,,
\end{equation}
 see \cite[Subsection 5.3]{RT16}.

We conclude this section with the next result which is a diagonalised version of the Hardy-Littlewood-Sobolev inequality with the best constant on $\H$ as shown in \cite{FL12}. 
\begin{thm}[The Hardy-Littlewood-Sobolev inequality]\label{H.liitle}
    Let $0 < \lambda < Q$ and $p := \frac{2Q}{2Q-\lambda}$. Then for
any $f,g \in L^p(\H)$ we have
\begin{equation*}
\begin{split}
    \int_{\H}\int_{\H}\frac{g(y)\overline{f(x)}}{|y^{-1}\circ_{1}x|_{1}^{\lambda}}dxdy&\leq C_{\text{HLS},\lambda}\|f\|_{L^{p}(\H)}\|g\|_{L^{p}(\H)},
\end{split}
\end{equation*}
where $|\cdot|_1$ is the Kaplan norm given in \eqref{fln}, and the best constant $C_{\textit{HLS},\lambda}$ is given by
\begin{equation*}
    C_{\text{HLS},\lambda}=\left(\frac{\pi^{n+1}}{2^{n-1}n!}\right)^{\frac{\lambda}{Q}}\frac{n!\Gamma\left(\frac{Q-\lambda}{2}\right)}{\Gamma^{2}\left(\frac{2Q-\lambda}{4}\right)}.
\end{equation*}
\end{thm}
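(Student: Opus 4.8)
The plan is to establish the inequality in its sharp form by the rearrangement-free method of Frank and Lieb, exploiting the CR-conformal invariance of the kernel. First I would record two reductions. The kernel is symmetric: since $|z^{-1}|_{1}=|z|_{1}$ and $(y^{-1}\circ_{1}x)^{-1}=x^{-1}\circ_{1}y$, we have $|y^{-1}\circ_{1}x|_{1}=|x^{-1}\circ_{1}y|_{1}$, so the sesquilinear form $B(f,g):=\int_{\H}\int_{\H}\frac{g(y)\overline{f(x)}}{|y^{-1}\circ_{1}x|_{1}^{\lambda}}\,dx\,dy$ is Hermitian. Moreover the Riesz-type kernel $|\cdot|_{1}^{-\lambda}$ is positive definite, being a positive multiple of the fundamental solution of a positive fractional power of the sub-Laplacian (cf. \eqref{bs} with $\lambda=Q-2s$); hence $B(f,f)\geq 0$ and Cauchy--Schwarz for the nonnegative form gives $|B(f,g)|\leq B(f,f)^{1/2}B(g,g)^{1/2}$. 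Thus it suffices to prove the diagonal bound $B(f,f)\leq C_{\text{HLS},\lambda}\|f\|_{L^{p}(\H)}^{2}$, which is exactly the sense in which the stated inequality is the diagonalised version, and the problem becomes that of maximising $B(f,f)$ over $\|f\|_{L^{p}(\H)}=1$.

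Second, I would establish existence of an optimizer. Because $B$ is invariant under the noncompact group generated by left translations, dilations $\delta_{\lambda}$, and the Kelvin-type inversion attached to $|\cdot|_{1}$, a naive direct method loses compactness along conformal orbits. The cleanest route is to transfer the problem to the CR sphere $S^{2n+1}\subset\mathbb{C}^{n+1}$ via the Cayley transform. Under this conformal equivalence the kernel $|y^{-1}\circ_{1}x|_{1}^{-\lambda}$ becomes, up to the conformal Jacobian weights, a power $|1-\zeta\cdot\overline{\eta}|^{-\lambda/2}$ of the chordal distance on the sphere, and the remaining symmetry group is the compact unitary group $U(n+1)$. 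On the sphere the subcritical compactness together with the conformal covariance of the functional yields a maximizer at the endpoint exponent $p$.

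Third --- the technical heart --- I would identify the optimizers, and thereby the constant, by the method of competing symmetries: one combines the inversion coming from $|\cdot|_{1}$ with a fixed rotation of $S^{2n+1}$ to build a contraction-type operator on $L^{p}$ whose iterates drive any nonnegative maximizer toward a constant on the sphere. Monotonicity of $B$ along this iteration forces the extremizers to be precisely the $U(n+1)$-images of constants; pulling these back through the Cayley transform produces the extremal family on $\H$, the CR analogues of the Aubin--Talenti bubbles, of the form $c\,((1+|\xi|^{2})^{2}+\tau^{2})^{-(2Q-\lambda)/4}$. Inserting such an extremizer and evaluating the resulting convolution integral on $\H$ reduces, after passing to polar-type coordinates, to one-dimensional Gamma-function integrals and delivers the closed form $C_{\text{HLS},\lambda}$.

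The main obstacle is the absence of a usable symmetric-decreasing rearrangement on $\H$: Lieb's classical argument on $\mathbb{R}^{n}$ relies on the Riesz rearrangement inequality, which has no counterpart in the sub-Riemannian setting. This is exactly why the competing-symmetries step, rather than rearrangement, must carry the classification of extremizers, and the delicate bookkeeping is the conformal one --- tracking the Jacobian weights under the Cayley transform and the behaviour of $|\cdot|_{1}$ under the inversion --- whereas the concluding Gamma-function evaluation producing the explicit constant is, by comparison, routine.
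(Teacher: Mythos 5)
You should first be aware that the paper does not prove this theorem at all: it is imported verbatim from Frank and Lieb \cite{FL12}, where its proof occupies most of a long article. So there is no internal proof to compare against, and what you have written is a roadmap for reproving a major external result. Within that roadmap, your preliminary reductions are sound and do match the strategy of \cite{FL12}: the form is Hermitian because $|y^{-1}\circ_{1}x|_{1}=|x^{-1}\circ_{1}y|_{1}$; the kernel $|\cdot|_{1}^{-\lambda}$ with $\lambda=Q-2s$ is a positive multiple of the fundamental solution \eqref{bs} of the positive operator $\tilde{\mathcal{L}}_{s}$, so the form is positive semi-definite and Cauchy--Schwarz reduces the problem to the diagonal; and the Cayley transform to the CR sphere $S^{2n+1}$, turning the kernel into $|1-\zeta\cdot\overline{\eta}|^{-\lambda/2}$, is exactly how existence of an optimizer is obtained there.

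The genuine gap is in your third step, which is where the sharp constant actually comes from. The method of competing symmetries of Carlen and Loss requires \emph{two} operations that each do not decrease the functional: a conformal rotation \emph{and} the symmetric-decreasing rearrangement; the iteration converges to a constant precisely because the rearrangement keeps pulling the iterates back into a compact family. You correctly observe that no such rearrangement exists on $\H$ or on the CR sphere, but you then assign to "competing symmetries" the job that the rearrangement was doing --- with only the conformal rotations available there is no contraction to iterate, so the scheme you describe cannot be set in motion. What Frank and Lieb actually do instead is entirely different: they use the conformal automorphisms of the sphere to normalize the optimizer so that its center of mass vanishes, write down the Euler--Lagrange equation, and then expand the functional to second order against the ambient coordinate functions of $S^{2n+1}\subset\mathbb{C}^{n+1}$; positivity of the second variation at a maximizer, combined with the center-of-mass constraint, forces the optimizer to be constant. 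Without that argument (or a substitute for it) your sketch has no mechanism for identifying the extremizers, and hence no way to arrive at the stated value of $C_{\text{HLS},\lambda}$; the concluding Gamma-function computation is indeed routine, but only once you know what function to plug in.
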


\section{Fractional (logarithmic) Sobolev inequalities on $\mathbb{H}^{n}$}\label{sec:LS}

In this section, we show the fractional and the modified fractional (with respect to the operator $\Ls$ as in \eqref{Ls.Thang}) Sobolev inequalities on $\H$. We note that the first order Sobolev inequality on $\H$ with best constant was obtained by Jerison and Lee in \cite{JL88}, and here we extend their results for the modified fractional sub-Laplacian and for fractional powers of it. The log-Sobolev inequalities are also obtained, as well as the ``horizontal'' log-Sobolev inequality that eventually leads to the analogue of the Gross inequality on $\H$.

\begin{thm}[Fractional Sobolev inequality]\label{thm1}
   Let $s\in (0,1]$. The (modified) fractional Sobolev inequality on $\H$ is given by
    \begin{equation}\label{1b}
        \|f\|^{2}_{L^{\frac{2Q}{Q-2s}}}\leq C_{B,s}\langle \Ls f,f\rangle_{L^{2}(\H)},
    \end{equation}
    where 
    \begin{equation}\label{bestfrac}
        C_{B,s}=\frac{2^{-2s}\pi^{-s}(n!)^{\frac{s}{n+1}}\Gamma^{2}\left(\frac{Q-2s}{4}\right)}{\Gamma^{2}\left(\frac{Q+2s}{4}\right)}.
    \end{equation}
    Alternatively, inequality \eqref{1b} can be expressed in terms of fractional powers of $\L$ as 
    \begin{equation}\label{2b}
    \begin{split}
        \|f\|^{2}_{L^{\frac{2Q}{Q-2s}}}\leq C_{B,s}\|U_{s}\|_{\rm op} \langle \Lss f,f\rangle_{L^{2}(\H)},  
        \end{split}
    \end{equation}
    where $\|U_{s}\|_{\rm op}$ has been estimated in \eqref{est1}. 
\end{thm}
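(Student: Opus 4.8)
The plan is to obtain \eqref{1b} by a duality argument from the sharp Hardy--Littlewood--Sobolev (HLS) inequality of Theorem \ref{H.liitle}. Since that inequality is stated with respect to the Frank--Lieb group law $\circ_1$ and the associated Kaplan norm $|\cdot|_1$, I would first prove the Sobolev inequality in that realisation, for the operator $\widetilde{\mathcal{L}}_s$, and only at the end transfer it to the $\mathcal{L}_s$ picture by means of the dilation identity \eqref{dil}. The point of working in the $\circ_1$ realisation is that there the inverse of $\widetilde{\mathcal{L}}_s$ has the explicit convolution kernel $b_s|x|_1^{2s-Q}$ from \eqref{bs}, whose homogeneity exponent matches that of the HLS kernel exactly.

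First I would note that $\widetilde{\mathcal{L}}_s$ is a positive, self-adjoint, left-invariant convolution operator, so by \eqref{bs}, for $h\in L^{\frac{2Q}{Q+2s}}(\H)$,
\[
\langle \widetilde{\mathcal{L}}_s^{-1}h,h\rangle_{L^2(\H)}=b_s\int_{\H}\int_{\H}\frac{h(y)\overline{h(x)}}{|y^{-1}\circ_1 x|_1^{Q-2s}}\,dx\,dy .
\]
The right-hand side is precisely the bilinear form of Theorem \ref{H.liitle} with $f=g=h$ and $\lambda=Q-2s$ (so $0<\lambda<Q$); for this choice the HLS exponent is $p=\frac{2Q}{2Q-\lambda}=\frac{2Q}{Q+2s}$, whence
\[
\langle \widetilde{\mathcal{L}}_s^{-1}h,h\rangle_{L^2(\H)}\le b_s\,C_{\mathrm{HLS},\,Q-2s}\,\|h\|_{L^{\frac{2Q}{Q+2s}}(\H)}^2 .
\]

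Next, since $\langle \widetilde{\mathcal{L}}_s^{-1}h,h\rangle=\|\widetilde{\mathcal{L}}_s^{-1/2}h\|_{L^2}^2$, the last display says that the self-adjoint operator $\widetilde{\mathcal{L}}_s^{-1/2}$ is bounded from $L^{\frac{2Q}{Q+2s}}(\H)$ into $L^2(\H)$, with norm at most $(b_s C_{\mathrm{HLS},\,Q-2s})^{1/2}$. By duality together with self-adjointness (and $(L^{\frac{2Q}{Q+2s}})^*=L^{\frac{2Q}{Q-2s}}$) the same operator is bounded from $L^2(\H)$ into $L^{\frac{2Q}{Q-2s}}(\H)$ with the same norm, and substituting $h=\widetilde{\mathcal{L}}_s^{1/2}f$ yields the Sobolev inequality in the Frank--Lieb picture,
\[
\|f\|_{L^{\frac{2Q}{Q-2s}}(\H)}^2\le b_s\,C_{\mathrm{HLS},\,Q-2s}\,\langle \widetilde{\mathcal{L}}_s f,f\rangle_{L^2(\H)} .
\]
Finally, writing $f(\xi,\tau)=v(2\xi,\tau)$ and combining the scaling $\|f\|_{L^{2Q/(Q-2s)}}^2=2^{-4n/p^*}\|v\|_{L^{2Q/(Q-2s)}}^2$ with the identity \eqref{dil}, $\langle\widetilde{\mathcal{L}}_s f,f\rangle=2^{-2n}\langle\mathcal{L}_s v,v\rangle$, transfers the inequality to $v$ with a multiplicative factor $2^{-2n+4n/p^*}=2^{-2ns/(n+1)}$. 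Inserting the explicit value of $b_s$ from \eqref{bs} and of $C_{\mathrm{HLS},\,Q-2s}$ from Theorem \ref{H.liitle} (using $\Gamma(\tfrac{Q-\lambda}{2})=\Gamma(s)$ and $\Gamma^2(\tfrac{2Q-\lambda}{4})=\Gamma^2(\tfrac{Q+2s}{4})$), the $\Gamma$-quotient and the factor $(n!)^{\frac{s}{n+1}}\pi^{-s}$ appear immediately, while the powers of $2$ from $b_s C_{\mathrm{HLS}}$ ($2^{-2s/(n+1)}$) and from the dilation ($2^{-2ns/(n+1)}$) combine to $2^{-2s}$, reproducing exactly the constant $C_{B,s}$ of \eqref{bestfrac}. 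The alternative form \eqref{2b} then follows from \eqref{1b} by writing $\mathcal{L}_s=U_s\mathcal{L}^s$ with $U_s$ as in \eqref{Us}: since $U_s$ is a positive spectral multiplier, $\langle\mathcal{L}_s f,f\rangle=\langle U_s\mathcal{L}^s f,f\rangle\le\|U_s\|_{\mathrm{op}}\langle\mathcal{L}^s f,f\rangle$, with $\|U_s\|_{\mathrm{op}}$ estimated in \eqref{est1}.

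The main obstacle I anticipate is not conceptual but the consistent bookkeeping of constants: one must keep the HLS bilinear form, the fundamental solution \eqref{bs}, and the target inequality all expressed with respect to the same group law and Kaplan norm — which is why the argument is run in the $\circ_1$ realisation — and then track carefully the anisotropic powers of $2$ generated by the dilation $\xi\mapsto 2\xi$ through both the $L^{\frac{2Q}{Q-2s}}$-norm and the quadratic form, so that they cancel against those in $b_s C_{\mathrm{HLS}}$ to give precisely $2^{-2s}$. The endpoint case $s=1$ needs only the remark that $\mathcal{L}_1=\mathcal{L}$, so that \eqref{1b} recovers the Jerison--Lee inequality with its sharp constant (cf. Remark \ref{remJL}).
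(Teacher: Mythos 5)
Your proposal is correct and follows essentially the same route as the paper: the sharp Hardy--Littlewood--Sobolev inequality applied to the quadratic form $\langle \tilde{\mathcal{L}}_s^{-1}h,h\rangle$ via the fundamental solution \eqref{bs}, a duality step, and then the transfer from $\tilde{\mathcal{L}}_s$ to $\mathcal{L}_s$ by the dilation identity \eqref{dil}, with the powers of $2$ combining to $2^{-2s}$ exactly as you describe. The only (cosmetic) difference is that you phrase the duality abstractly through the boundedness of $\tilde{\mathcal{L}}_s^{-1/2}$ and its adjoint, whereas the paper carries it out by pairing $u$ against $|u|^{q}u^{-1}$ after a Cauchy--Schwarz step on the Fourier transform side.
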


\begin{proof}
By using Parsevals's theorem and \cite[Corollary 4.1]{GK}, we have
\begin{equation*}
    \begin{split}
        |\langle u,g\rangle_{L^{2}(\H)}|^{2}&=\left|\int_{\mathbb{R}\setminus \{0\}}\text{Tr}\left(\pi_{\lambda}(u)\pi_{\lambda}(g)^{*}\right)d\mu(\lambda)\right|^{2}\\&
            =\left|\int_{\mathbb{R}\setminus \{0\}}\text{Tr}\left(\sigma_{\tilde{\Ls}}^{\frac{1}{2}}(\lambda)\pi_{\lambda}(u)\pi_{\lambda}(g)^{*}\sigma_{\tilde{\Ls}}^{-\frac{1}{2}}(\lambda)\right)d\mu(\lambda)\right|^{2}\\&
        \leq \left(\int_{\mathbb{R}\setminus \{0\}}\|\sigma_{\tilde{\Ls}}^{\frac{1}{2}}(\lambda)\pi_{\lambda}(u)\|^{2}_{\text{HS}}d\mu(\lambda)\right)\left(\int_{\mathbb{R}\setminus \{0\}}\|\sigma_{\tilde{\Ls}}^{-\frac{1}{2}}(\lambda)\pi_{\lambda}(g)\|^{2}_{\text{HS}}d\mu(\lambda)\right)\\&
        =\|\tilde{\Ls}^{\frac{1}{2}}u\|^{2}_{L^{2}(\H)}\|\tilde{\Ls}^{-\frac{1}{2}}g\|^{2}_{L^{2}(\H)}\\&
        =\langle \tilde{\Ls} u,u\rangle_{L^{2}(\H)}\langle  g,\tilde{\Ls}^{-1}g\rangle_{L^{2}(\H)},
    \end{split}
\end{equation*}
where $d\mu=\frac{2^{n-1}}{\pi^{n+1}}|\lambda|^{n}d\lambda$.
Using the Hardy-Littlewood-Sobolev inequality as in Theorem \ref{H.liitle} we obtain, with $p=\frac{2Q}{Q+2s}$,
\begin{equation*}
    \begin{split}
        \langle  g,\tilde{\Ls}^{-1}g\rangle_{L^{2}(\H)}&=b_{s}\int_{\H}\int_{\H}\frac{g(y)\overline{g(x)}}{|y^{-1}\circ_{1}x|_{1}^{Q-2s}}dxdy\\&
         \leq b_{s}C_{\text{HLS},Q-2s}\|g\|^{2}_{L^{p}(\H)}\\&
         =\frac{2^{n-1-s}\Gamma^{2}\left(\frac{Q-2s}{4}\right)}{\pi^{n+1}\Gamma(s)}\left(\frac{\pi^{n+1}}{2^{n-1}n!}\right)^{\frac{Q-2s}{Q}}\frac{n!\Gamma\left(s\right)}{\Gamma^{2}\left(\frac{Q+2s}{4}\right)}\|g\|^{2}_{L^{p}(\H)}\\&
         =\frac{2^{-\frac{2s}{n+1}}\pi^{-s}(n!)^{\frac{s}{n+1}}\Gamma^{2}\left(\frac{Q-2s}{4}\right)}{\Gamma^{2}\left(\frac{Q+2s}{4}\right)}\|g\|^{2}_{L^{p}(\H)},
    \end{split}
\end{equation*}
where $p=\frac{2Q}{Q+2s}$, while the constant $b_{s}$ and  the distance function $|\cdot|_{1}$ are defined in \eqref{bs} and \eqref{fln}, respectively.
Hence, by the above we have
\begin{equation*}\label{2sc}
    \begin{split}
       \|u\|^{2q}_{L^{q}(\H)}&= |\langle u,|u|^{q}u^{-1}\rangle_{L^{2}(\H)}|^{2}\\&
       \leq \langle \tilde{\Ls} u,u\rangle_{L^{2}(\H)}\langle  |u|^{q}u^{-1},\tilde{\Ls}^{-1}|u|^{q}u^{-1}\rangle_{L^{2}(\H)}\\&
       \leq \frac{2^{-\frac{2s}{n+1}}\pi^{-s}(n!)^{\frac{s}{n+1}}\Gamma^{2}\left(\frac{Q-2s}{4}\right)}{\Gamma^{2}\left(\frac{Q+2s}{4}\right)}\langle \tilde{\Ls} u,u\rangle_{L^{2}(\H)} \|u\|^{2q-2}_{L^{q}(\H)},
    \end{split}
\end{equation*}
for $q=\frac{p}{p-1}=\frac{2Q}{Q-2s}$. We note that $|u|^q u^{-1}$ is well defined in view of $||u|^q u^{-1}|\leq |u|^{q-1}$, and $q>1$. The latter can be rewritten as 

\begin{equation*}
    \|u\|^{2}_{L^{\frac{2Q}{Q-2s}(\H)}}\leq \frac{2^{-\frac{2s}{n+1}}\pi^{-s}(n!)^{\frac{s}{n+1}}\Gamma^{2}\left(\frac{Q-2s}{4}\right)}{\Gamma^{2}\left(\frac{Q+2s}{4}\right)}\langle \tilde{\Ls} u,u\rangle_{L^{2}(\H)}. 
\end{equation*}

Now by \eqref{dil} and substituting $u(\xi,\tau_{1})=f(2\xi,\tau_{1})$ we get, noting $\frac{2Q}{Q-2s}=\frac{2n+2}{n+1-s}$,
\begin{equation}\label{new}
    \begin{split}
       \left(\int_{\H}2^{-2n}|f(\xi_{1},\tau_{1})|^{\frac{2n+2}{n+1-s}}d\xi_{1} d\tau_{1}\right)^{\frac{n+1-s}{n+1}}&=\left(\int_{\H}|f(2\xi,\tau_{1})|^{\frac{2n+2}{n+1-s}}d\xi d\tau_{1}\right)^{\frac{n+1-s}{n+1}}\\&
       =\left(\int_{\H}|u(\xi,\tau_{1})|^{\frac{2n+2}{n+1-s}}d\xi d\tau_{1}\right)^{\frac{n+1-s}{n+1}}=\\&
        \leq\frac{2^{-\frac{2s}{n+1}}\pi^{-s}(n!)^{\frac{s}{n+1}}\Gamma^{2}\left(\frac{Q-2s}{4}\right)}{\Gamma^{2}\left(\frac{Q+2s}{4}\right)}\langle \tilde{\Ls} u,u\rangle_{L^{2}(\H)}\\&
        =\frac{2^{-\frac{2s}{n+1}-2n}\pi^{-s}(n!)^{\frac{s}{n+1}}\Gamma^{2}\left(\frac{Q-2s}{4}\right)}{\Gamma^{2}\left(\frac{Q+2s}{4}\right)}\langle \Ls f,f\rangle_{L^{2}(\H)}\,.
    \end{split}
\end{equation}
We note that $2^{-\frac{2s}{n+1}-2n}2^{2n\frac{n+1-s}{n+1}}=2^{-2s}$.
Hence \eqref{new} implies
\begin{eqnarray*}
    \left(\int_{\H}|f(\xi_{1},\tau_{1})|^{\frac{2n+2}{n+1-s}}d\xi_{1} d\tau_{1}\right)^{\frac{n+1-s}{n+1}} &\leq & 2^{-2s} \frac{\pi^{-s}(n!)^{\frac{s}{n+1}}\Gamma^{2}\left(\frac{Q-2s}{4}\right)}{\Gamma^{2}\left(\frac{Q+2s}{4}\right)} \langle \Ls f,f\rangle_{L^{2}(\H)}\,,
\end{eqnarray*}
 the last can be rewritten as 
\begin{equation*}
    \|f\|^{2}_{L^{\frac{2Q}{Q-2s}}}\leq C_{B,s}\langle \Ls f,f\rangle_{L^{2}(\H)}\,,
\end{equation*}
with $C_{B,s}$ as in \eqref{bestfrac}. Equivalently, using the fact that $U_s=\Ls(\Lss)^{-1}$, the latter can be expressed as in \eqref{2b}, and the proof is complete.
\end{proof}
\begin{rem}\label{remJL}
We note that when  $s=1$ in Theorem \ref{thm1}, the appearing constant in the inequality \eqref{1b} coincides with the one in the Sobolev inequality with respect to the sub-Laplacian involving the vector fields $\tilde{X}_{i}$ and $\tilde{Y}_{i},$ for $i=1,\ldots,n$, that was shown by Jerison and Lee in \cite{JL88}. Additionally, it is clear from the proof of Theorem \ref{thm1} and the sharpness of the Sobolev inequality involving the modified sub-Laplacian in \cite{JL88} that the Sobolev inequality involving the sub-Laplacian $\Lss$ is also sharp in general, and for the appearing constant $C_{B,s}$ for $s=1$, one has   
\begin{equation}\label{bestintsob}
    C_{B,1}=\frac{(n!)^{\frac{1}{n+1}}}{\pi n^{2}}\,,
\end{equation}
since 
\[
\frac{\Gamma\left(\frac{Q-2}{4} \right)}{\Gamma\left(\frac{Q+2}{4} \right)}=\frac{\Gamma\left(\frac{Q-2}{4} \right)}{\Gamma\left(\frac{Q-2}{4}+1 \right)}=\frac{4}{Q-2}=\frac{2}{n}\,,
\]
where we have used the property that for $x>0$ we have $\Gamma(x+1)=x \Gamma(x)$.
\end{rem}

\begin{rem}
    In \cite[p. 151]{RT16} the authors deduce a weaker form of Hardy inequality which is essentially the (modified) fractional inequality on $\H$ and conclude that the resulting inequality is not the one obtained by Frank and Lieb in \cite[p.353]{FL12}. However, comparing the latter inequality with the one in \cite{RT16} for $s=1$, one can observe that there is the extra constant $2^{-\frac{1}{n+1}}$ on the left-hand side of the inequality in \cite{RT16}. The authors of \cite{RT16} have confirmed to us that this factor arose from a confusion related to the use of two different sub-Laplacian in these two papers, and should not be there. The arguments in the proof of Theorem \ref{thm1} take this fact into account, and yield the correct constant in these inequalities.
    
\end{rem}

To obtain the logarithmic analogues of the (fractional) Sobolev inequalities as in Theorem \ref{thm1}, the latter must be combined with the logarithmic H\"{o}lder's inequality on general measure spaces. The following lemma states the desired inequality that was first shown in \cite{CKR21c}; see also \cite[Lemma 3.2]{KRS20} for a more general version of it.
\begin{lem}[Logarithmic H\"older inequality]\label{holder}
Let $\X$ be a  measure space and let $u\in L^{p}(\mathbb{X})\cap L^{q}(\mathbb{X})\setminus\{0\}$, where $1<p<q< \infty.$ 
Then we have
\begin{equation}\label{holdernn}
\int_{\mathbb{X}}\frac{|u|^{p}}{\|u\|^{p}_{L^{p}(\mathbb{X})}}\log\left(\frac{|u|^{p}}{\|u\|^{p}_{L^{p}(\mathbb{X})}}\right)dx\leq \frac{q}{q-p}\log\left(\frac{\|u\|^{p}_{L^{q}(\mathbb{X})}}{\|u\|^{p}_{L^{p}(\mathbb{X})}}\right).
\end{equation}
\end{lem}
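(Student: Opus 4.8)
The plan is to recognise the left-hand side of \eqref{holdernn} as a relative entropy and to control it by a single application of Jensen's inequality against the probability measure naturally attached to $u$. First I would introduce, using $u\not\equiv 0$, the measure $d\nu:=\frac{|u|^{p}}{\|u\|^{p}_{L^{p}(\mathbb{X})}}\,dx$ on $\mathbb{X}$; it is a probability measure since $\int_{\mathbb{X}}|u|^{p}\,dx=\|u\|^{p}_{L^{p}(\mathbb{X})}$ gives $\nu(\mathbb{X})=1$. Writing $h:=\frac{|u|^{p}}{\|u\|^{p}_{L^{p}(\mathbb{X})}}$, so that $d\nu=h\,dx$, the left-hand side of \eqref{holdernn} is exactly $\int_{\mathbb{X}}\log h\,d\nu$.

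The key algebraic manoeuvre is to split the logarithm so that one Jensen step produces the $L^{q}$ norm. Setting $r:=\frac{q-p}{p}>0$, I would write $\log h=\frac{1}{r}\log\!\left(h^{r}\right)=\frac{p}{q-p}\log\!\left(h^{(q-p)/p}\right)$ and apply Jensen's inequality to the concave function $\log$ with respect to $\nu$:
\begin{equation*}
\int_{\mathbb{X}}\log h\,d\nu=\frac{p}{q-p}\int_{\mathbb{X}}\log\!\left(h^{(q-p)/p}\right)d\nu\leq\frac{p}{q-p}\log\!\left(\int_{\mathbb{X}}h^{(q-p)/p}\,d\nu\right).
\end{equation*}
The exponent $r$ is chosen precisely so that the inner integral collapses onto the $L^{q}$ norm: since $d\nu=h\,dx$ one has $\int_{\mathbb{X}}h^{(q-p)/p}\,d\nu=\int_{\mathbb{X}}h^{(q-p)/p+1}\,dx=\int_{\mathbb{X}}h^{q/p}\,dx$, and because $h^{q/p}=|u|^{q}/\|u\|^{q}_{L^{p}(\mathbb{X})}$ this equals $\|u\|^{q}_{L^{q}(\mathbb{X})}/\|u\|^{q}_{L^{p}(\mathbb{X})}$.

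Finally I would substitute this evaluation back and tidy the constants. Using $\log\!\big(\|u\|^{q}_{L^{q}(\mathbb{X})}/\|u\|^{q}_{L^{p}(\mathbb{X})}\big)=q\log\!\big(\|u\|_{L^{q}(\mathbb{X})}/\|u\|_{L^{p}(\mathbb{X})}\big)$, the bound becomes $\frac{pq}{q-p}\log\!\big(\|u\|_{L^{q}(\mathbb{X})}/\|u\|_{L^{p}(\mathbb{X})}\big)=\frac{q}{q-p}\log\!\big(\|u\|^{p}_{L^{q}(\mathbb{X})}/\|u\|^{p}_{L^{p}(\mathbb{X})}\big)$, which is exactly the asserted inequality. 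The only point that needs genuine care is the legitimacy of the Jensen step and the well-posedness of the two sides: the function $h^{(q-p)/p}$ lies in $L^{1}(\nu)$ because its $\nu$-integral is $\|u\|^{q}_{L^{q}(\mathbb{X})}/\|u\|^{q}_{L^{p}(\mathbb{X})}<\infty$ by the hypothesis $u\in L^{q}(\mathbb{X})$, while integrability of the entropy integrand $h\log h$ follows from interpolating $h^{1+\varepsilon}$ between $L^{1}$ and $L^{q/p}$ for small $\varepsilon>0$; in any case the statement is an upper bound, so no lower-integrability obstruction can invalidate it. This integrability check is the only technical ingredient, the conceptual content being the single Jensen application.
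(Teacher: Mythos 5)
Your proof is correct, and it is essentially the standard argument: the paper itself does not reprove this lemma but cites \cite{CKR21c} (see also \cite[Lemma 3.2]{KRS20}), where the proof is exactly this single application of Jensen's inequality for the concave logarithm against the probability measure $\frac{|u|^{p}}{\|u\|^{p}_{L^{p}(\mathbb{X})}}\,dx$, with the exponent $\frac{q-p}{p}$ chosen so that the inner integral collapses to $\|u\|^{q}_{L^{q}(\mathbb{X})}/\|u\|^{q}_{L^{p}(\mathbb{X})}$. Your integrability remarks (the positive part of $h\log h$ is dominated by a constant times $h^{q/p}\in L^{1}$, and the inequality is only an upper bound) correctly dispose of the one technical point.
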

A combination of Lemma \ref{holder} together with Theorem \ref{thm1} yields the logarithmic version of the Sobolev inequality involving both the modified fractional sub-Laplacian $\Ls$ and powers of the sub-Laplacian $\Lss$. 
\begin{thm}[Logarithmic Sobolev inequality]\label{thm:Logsob}
Let $s \in (0,1]$. The (modified) fractional logarithmic Sobolev inequality on $\H$ is as follows   
   \begin{equation}\label{LogSobolev1}
\int_{\H}\frac{|u|^{2}}{\|u\|^{2}_{L^{2}(\H)}}\log\left(\frac{|u|^{2}}{\|u\|^{2}_{L^{2}(\H)}}\right) dx \leq \frac{Q}{2s}\log\left(C_{B,s}\frac{\langle \Ls u,u\rangle_{L^{2}(\H)}}{\|u\|^{2}_{L^{2}(\H)}}\right),
\end{equation}
where $C_{B,s}$ is given in \eqref{bestfrac}.
   Additionally, the logarithmic Sobolev inequality on $\H$ in terms of fractional powers of $\L$ reads as follows 
    \begin{equation}\label{LogSobolev2}
\int_{\H}\frac{|u|^{2}}{\|u\|^{2}_{L^{2}(\H)}}\log\left(\frac{|u|^{2}}{\|u\|^{2}_{L^{2}(\H)}}\right) dx \leq \frac{Q}{2s}\log\left(C_{B,s}\|U_{s}\|_{\rm op}\frac{\langle \Lss u,u\rangle_{L^{2}(\H)}}{\|u\|^{2}_{L^{2}(\H)}}\right),
\end{equation}
  where $\|U_{s}\|_{\rm op}$ has been estimated in \eqref{est1}.
\end{thm}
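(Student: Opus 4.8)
The plan is to combine the logarithmic H\"older inequality from Lemma~\ref{holder} with the fractional Sobolev inequality from Theorem~\ref{thm1}; essentially no new analytic input is required beyond a judicious choice of exponents and the monotonicity of the logarithm. First I would apply Lemma~\ref{holder} on $\X=\H$ with the exponents $p=2$ and $q=\frac{2Q}{Q-2s}$. For every $s\in(0,1]$ these are admissible: one has $q-p=\frac{4s}{Q-2s}>0$, so $q>p$, and $q<\infty$ since $Q=2n+2>2$. The key arithmetic step is the identity $\frac{q}{q-p}=\frac{Q}{2s}$, which follows immediately from $q-p=\frac{4s}{Q-2s}$. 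With this choice Lemma~\ref{holder} reads
\[
\int_{\H}\frac{|u|^{2}}{\|u\|^{2}_{L^{2}(\H)}}\log\left(\frac{|u|^{2}}{\|u\|^{2}_{L^{2}(\H)}}\right)dx\leq \frac{Q}{2s}\log\left(\frac{\|u\|^{2}_{L^{\frac{2Q}{Q-2s}}(\H)}}{\|u\|^{2}_{L^{2}(\H)}}\right).
\]

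Next I would insert the Sobolev bound. By inequality~\eqref{1b} of Theorem~\ref{thm1} we have $\|u\|^{2}_{L^{\frac{2Q}{Q-2s}}(\H)}\leq C_{B,s}\langle \Ls u,u\rangle_{L^{2}(\H)}$, and dividing through by $\|u\|^{2}_{L^{2}(\H)}$ gives
\[
\frac{\|u\|^{2}_{L^{\frac{2Q}{Q-2s}}(\H)}}{\|u\|^{2}_{L^{2}(\H)}}\leq C_{B,s}\frac{\langle \Ls u,u\rangle_{L^{2}(\H)}}{\|u\|^{2}_{L^{2}(\H)}}.
\]
Since $t\mapsto\log t$ is increasing and the prefactor $\frac{Q}{2s}$ is positive, substituting this bound into the right-hand side of the displayed log-H\"older inequality yields \eqref{LogSobolev1} directly.

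Finally, to obtain \eqref{LogSobolev2} I would repeat the last step verbatim, but invoke the alternative form~\eqref{2b} of the Sobolev inequality, namely $\|u\|^{2}_{L^{\frac{2Q}{Q-2s}}(\H)}\leq C_{B,s}\|U_{s}\|_{\rm op}\langle \Lss u,u\rangle_{L^{2}(\H)}$, once more passing the upper bound through the logarithm. There is no genuine obstacle in this argument: the only points demanding a moment's care are checking the admissibility of the exponent pair $\left(2,\frac{2Q}{Q-2s}\right)$ in Lemma~\ref{holder} for all $s\in(0,1]$, and confirming that substituting an upper bound inside the increasing function $\log$ preserves the inequality. Both are routine, so the statement follows immediately from the two cited results.
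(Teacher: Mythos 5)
Your proposal is correct and follows essentially the same route as the paper: the authors likewise apply the logarithmic H\"older inequality with $p=2$, $q=\frac{2Q}{Q-2s}$, note that $\frac{q}{q-2}=\frac{Q}{2s}$, and then insert the Sobolev bound \eqref{1b} (respectively \eqref{2b}) through the increasing logarithm. No further comment is needed.
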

\begin{proof}
From the logarithmic H\"older inequality \eqref{holdernn} we have
\begin{equation}
\int_{\H}\frac{|u(x)|^{2}}{\|u\|^{2}_{L^{2}(\H)}}\log\left(\frac{|u(x)|^{2}}{\|u\|^{2}_{L^{2}(\H)}}\right)dx\leq \frac{q}{q-2}\log\left(\frac{\|u\|^{2}_{L^{q}(\H)}}{\|u\|^{2}_{L^{2}(\H)}}\right).
\end{equation}
For $p=2<q=p^{*}=\frac{2Q}{Q-2s}$ by \eqref{1b} we have
\begin{eqnarray*}\label{log.sob.}
\int_{\H}\frac{|u(x)|^{2}}{\|u\|^{2}_{L^{2}(\H)}}\log\left(\frac{|u(x)|^{2}}{\|u\|^{2}_{L^{2}(\H)}}\right) dx & \leq & \frac{q}{q-2}\log\left(\frac{\|u\|^{2}_{L^{q}(\H)}}{\|u\|^{2}_{L^{2}(\H)}}\right)\nonumber\\
& \leq  & \frac{q}{q-2} \log\left(C_{B,s}\frac{\langle \Ls u,u\rangle_{L^{2}(\H)}}{\|u\|^{2}_{L^{2}(\H)}}\right)\nonumber\\
& =  & \frac{Q}{2s} \log\left(C_{B,s}\frac{\langle \Ls u,u\rangle_{L^{2}(\H)}}{\|u\|^{2}_{L^{2}(\H)}}\right),
\end{eqnarray*}
where $C_{B,s}$ is defined in \eqref{bestfrac},  and this shows \eqref{LogSobolev1}. In the same way, we get \eqref{LogSobolev2}, and the proof of the theorem is complete.\end{proof}
As the next result shows one can rewrite the fractional logarithmic Sobolev inequality \eqref{LogSobolev1} for $s=1$ using the horizontal gradient on $\H$. The proof of Corollary \ref{cor.s=1} is immediate since $\L=-\nabla_{\H}^{*}\nabla_{\H}$. 
\begin{cor}\label{cor.s=1}
    We have
     \begin{equation}\label{LogSobolevint}
\int_{\H}\frac{|u|^{2}}{\|u\|^{2}_{L^{2}(\H)}}\log\left(\frac{|u|^{2}}{\|u\|^{2}_{L^{2}(\H)}}\right) dx \leq \frac{Q}{2}\log\left(C_{B,1}\frac{\|\nabla_{\H}u\|^{2}_{L^{2}(\H)}}{\|u\|^{2}_{L^{2}(\H)}}\right)\,.
\end{equation}
Additionally, whenever $u$ is such that $\|u\|_{L^{2}(\H)}=1$, we have
    \begin{equation}\label{u=1}
        \int_{\H}|u|^{2}\log\left(|u|\right) dx \leq \frac{Q}{4}\log\left(C_{B,1}\|\nabla_{\H}u\|^{2}_{L^{2}(\H)}\right),
    \end{equation}
where $C_{B,1}$ is given in \eqref{bestintsob}.
\end{cor}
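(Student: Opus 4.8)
The plan is to deduce both displayed inequalities directly from the case $s=1$ of the logarithmic Sobolev inequality \eqref{LogSobolev1}, which is already at our disposal from Theorem \ref{thm:Logsob}. The single new ingredient needed is the identification of the quadratic form $\langle \mathcal{L} u,u\rangle$ with the squared $L^2$-norm of the horizontal gradient, so the whole argument is essentially a rewriting.

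First I would specialize \eqref{LogSobolev1} to $s=1$. Since $\mathcal{L}_1=\mathcal{L}$ and $\mathcal{L}=-\nabla_{\H}^{*}\cdot\nabla_{\H}$, an integration by parts — equivalently, using the self-adjointness of $\mathcal{L}$ on $L^2(\H)$ together with the fact that $\nabla_{\H}^{*}$ is the formal adjoint of $\nabla_{\H}$ — yields
\begin{equation*}
\langle \mathcal{L} u,u\rangle_{L^{2}(\H)}=\langle \nabla_{\H}u,\nabla_{\H}u\rangle_{L^{2}(\H)}=\|\nabla_{\H}u\|^{2}_{L^{2}(\H)}.
\end{equation*}
Substituting this into the right-hand side of \eqref{LogSobolev1} with $s=1$ produces \eqref{LogSobolevint} at once, with the constant $C_{B,1}$ given explicitly in \eqref{bestintsob}.

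For \eqref{u=1} I would impose the normalization $\|u\|_{L^{2}(\H)}=1$. Under this assumption the denominators in \eqref{LogSobolevint} disappear, the left-hand side becomes $\int_{\H}|u|^{2}\log(|u|^{2})\,dx$, and writing $\log(|u|^{2})=2\log|u|$ lets me pull out a factor of $2$; dividing both sides by $2$ gives precisely \eqref{u=1}. As for obstacles, there is essentially none: the only step calling for a word of justification is the integration-by-parts identity above, which is standard and holds on the relevant dense domain, so the corollary follows immediately from the already-established Theorem \ref{thm:Logsob}.
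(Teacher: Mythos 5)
Your proposal is correct and matches the paper's own argument: the paper likewise notes that the corollary is immediate from Theorem \ref{thm:Logsob} with $s=1$ via the identity $\mathcal{L}=-\nabla_{\H}^{*}\nabla_{\H}$, which gives $\langle \mathcal{L}u,u\rangle_{L^{2}(\H)}=\|\nabla_{\H}u\|^{2}_{L^{2}(\H)}$. The normalization step yielding \eqref{u=1}, with the factor $2$ absorbed to turn $\frac{Q}{2}$ into $\frac{Q}{4}$, is exactly as intended.
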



\section{Gross inequalities}

In this section, we prove that the analogue of the Gross inequality  holds true in the $\H$-setting. Even more, in Theorem \ref{equiv.thm} we show that the log-Sobolev inequality involving the horizontal gradient, the so-called ``horizontal'' log-Sobolev inequality, \eqref{LogSobolev1} is equivalent to the latter, as it happens in the Euclidean setting, see \cite{Bec98}.  The idea of passing the analogue of the Gross inequality on $\H$ to an infinite dimensional object like $\mathbb{H}^{\infty}$ is discussed in Observation \ref{mainrem}
where we see that in the limiting case, i.e., when the dimension of the first stratum of $\H$ becomes very big, the probability measure that is considered on it, becomes exactly the Gaussian measure on $\mathbb{R}^{2n}$.

\begin{thm}[Semi-Gaussian log-Sobolev inequality on $\H$]\label{semi-g}
The following ``semi-Gaussian'' log-Sobolev inequality is satisfied
\begin{equation}
    \label{gaus.log.sob}
    \int_{\H}|g|^2\log|g|\,d\mu \leq \int_{\H} |\nabla_{\H}g|^2\,d\mu\,,
\end{equation}
for any $g$ such that $\|g\|_{L^2(\H,\mu)}=1,$
 where $\mu=\mu_1 \otimes \mu_2$, and $\mu_1$ is the Gaussian measure on $\mathbb{R}^{2n}$ given by $d\mu_1=\gamma e^{-\frac{|\xi|^2}{2}}d\xi$, for $\xi\in \mathbb{R}^{2n}$, with $|\xi|$ being the Euclidean norm of $\xi$, where 
 \begin{equation}
     \label{k}
     \gamma=n!\left(\frac{n+1}{2\pi n^{2}}\right)^{n+1}e^{n-1}\,,
 \end{equation}
 and  $d\mu_2=d\tau$ is the Lebesgue measure on $\mathbb{R}$ with respect to the variable $\tau$, for $x=(\xi,\tau) \in \H$.
\end{thm}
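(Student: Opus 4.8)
The plan is to deduce the semi-Gaussian inequality \eqref{gaus.log.sob} from the normalised ``horizontal'' log-Sobolev inequality \eqref{u=1}, by a change of unknown that absorbs the Gaussian weight, exactly as one passes from the Euclidean (Stam/Weissler) log-Sobolev inequality to the classical Gross inequality \eqref{EQ:Gross}. I would set $\phi(\xi)=\sqrt{\gamma}\,e^{-|\xi|^2/4}$, so that $d\mu=\phi^2\,dx$ with $dx=d\xi\,d\tau$ the Haar measure, and substitute $u=g\phi$. Since $\|u\|_{L^2(\H)}^2=\int_{\H}|g|^2\phi^2\,dx=\|g\|_{L^2(\H,\mu)}^2=1$, the function $u$ is an admissible competitor in \eqref{u=1}.

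The key structural observation is that $\phi$ depends only on the first-stratum variable $\xi\in\mathbb{R}^{2n}$ and not on the central variable $\tau$. Hence, when applying the vector fields $X_i,Y_i$ to $\phi$, the terms carrying $\partial_\tau$ drop out, and one finds $X_i\phi=-\tfrac12\xi_i'\phi$, $Y_i\phi=-\tfrac12\overline{\xi}_i\phi$; that is, $\nabla_{\H}u=\phi\bigl(\nabla_{\H}g-\tfrac12\xi g\bigr)$, where $\xi g$ denotes the $2n$-tuple $(\xi_1'g,\dots,\overline{\xi}_n g)$. Expanding the square and integrating against $dx$ gives
\[
\|\nabla_{\H}u\|_{L^2(\H)}^2=\int_{\H}|\nabla_{\H}g|^2\,d\mu-\tfrac12\int_{\H}\xi\cdot\nabla_{\H}(|g|^2)\,d\mu+\tfrac14\int_{\H}|\xi|^2|g|^2\,d\mu .
\]
The cross term is treated by integration by parts, using $X_i^*=-X_i$, $Y_i^*=-Y_i$ with respect to $dx$ and the fact that $\xi_i'\phi^2,\ \overline{\xi}_i\phi^2$ are $\tau$-independent, so that only the $\xi$-derivatives contribute; this yields $\int_{\H}\xi\cdot\nabla_{\H}(|g|^2)\,d\mu=-2n+\int_{\H}|\xi|^2|g|^2\,d\mu$ (using $\|g\|_{L^2(\mu)}=1$), and hence
\[
\|\nabla_{\H}u\|_{L^2(\H)}^2=\int_{\H}|\nabla_{\H}g|^2\,d\mu+n-\tfrac14\int_{\H}|\xi|^2|g|^2\,d\mu .
\]
In parallel, writing $\log|u|=\log|g|+\tfrac12\log\gamma-\tfrac14|\xi|^2$ and integrating against $|u|^2\,dx=|g|^2\,d\mu$ gives $\int_{\H}|u|^2\log|u|\,dx=\int_{\H}|g|^2\log|g|\,d\mu+\tfrac12\log\gamma-\tfrac14\int_{\H}|\xi|^2|g|^2\,d\mu$.

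Substituting both identities into \eqref{u=1} and writing $t:=\|\nabla_{\H}u\|_{L^2(\H)}^2$, I expect the weighted second-moment term $\tfrac14\int_{\H}|\xi|^2|g|^2\,d\mu$ to cancel exactly, so that the claim \eqref{gaus.log.sob} reduces to the scalar inequality
\[
\tfrac{Q}{4}\log\!\bigl(C_{B,1}\,t\bigr)\le t-n+\tfrac12\log\gamma,\qquad t>0 .
\]
The final step is to verify this one-variable inequality: the right-hand side minus the left-hand side is strictly convex in $t$ and, since $Q/4=(n+1)/2$, is minimised at $t^*=(n+1)/2$; a direct computation shows that the minimum value equals $0$ precisely for the stated normalisation $\gamma=n!\bigl(\tfrac{n+1}{2\pi n^2}\bigr)^{n+1}e^{n-1}$ together with $C_{B,1}=\tfrac{(n!)^{1/(n+1)}}{\pi n^2}$ from \eqref{bestintsob}. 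Thus the inequality holds for all $t>0$, completing the argument. I expect the main obstacle to be the integration by parts on $\H$: one must check carefully that the central $\partial_\tau$ components of the horizontal vector fields contribute nothing, which hinges on the fact that the Gaussian density $\phi^2$ lives on the first stratum only. Once this is in place the computation parallels the Euclidean one, and the role of the constant $\gamma$ is precisely to saturate the resulting scalar optimisation — which is also what renders the inequality sharp.
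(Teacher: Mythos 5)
Your proposal is correct and follows essentially the same route as the paper's proof: the substitution $u=\gamma^{1/2}e^{-|\xi|^2/4}g$, an application of the normalised horizontal log-Sobolev inequality \eqref{u=1}, the integration-by-parts identity relating $\|\nabla_{\H}u\|_{L^2(\H)}^2$ to $\int_{\H}|\nabla_{\H}g|^2\,d\mu$ (with the central $\partial_\tau$ contributions vanishing because the Gaussian weight lives on the first stratum), and a final scalar inequality saturated by the choice of $\gamma$. The only cosmetic difference is that you verify the scalar inequality by minimising over $t$ directly, whereas the paper applies $\log r\le r-1$; these are equivalent.
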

As mentioned already in the introduction the ``semi-Gaussian'' inequality comes along with the idea of extending it to the infinite-dimensional Heisenberg group $\mathbb{H}^{\infty}$. The following observation argues in this direction. Very interestingly it shows that in the limiting case when $n \rightarrow \infty$ the probability measure in the case of $\H$ behaves exactly like the Gaussian measure on $\mathbb{R}^{2n}$. This means that the analogue of the Gross inequality in the case of $\mathbb{H}^{\infty}$ is the exact extension of the Gross inequality on $\H$.
\begin{ob}\label{mainrem}
First, observe that for large $n\gg 1$ (that is $Q\gg1$), we have that the constant $\gamma$ has the following form
\[
\gamma_{n \gg 1}\simeq(2\pi)^{-n-\frac{1}{2}}n^{-\frac{1}{2}}=\frac{1}{(2\pi)^n}\frac{1}{\sqrt{2\pi n}}\,.
\]
Indeed by using Stirling's approximation formula (see e.g. \cite{Rom18}), i.e., that for large $n$ we have $n! \sim \sqrt{2 \pi n} \left( \frac{n}{e}\right)^{n}$, one can compute 
\begin{eqnarray*}
    \gamma & = & n!\left(\frac{n+1}{2\pi n^{2}}\right)^{n+1}e^{n-1}\\
    & = & n! e^{n-1}(2\pi n)^{-n-1}\left(1+\frac{1}{n} \right)^{n+1}\\
    & \simeq & \sqrt{2\pi n}\left( \frac{n}{e}\right)^n e^{n-1}(2\pi n)^{-n-1} e \\
    & = & (2\pi)^{-n-\frac{1}{2}}n^{-\frac{1}{2}}\,,
\end{eqnarray*}
where we have used the fact that $\lim\limits_{n \rightarrow \infty} \left(1+\frac{1}{n} \right)^{n+1}=e$.
 
    Now, since the normalisation constant $\gamma$ corresponds to the probability measure of the first stratum of topological dimension $2n$, each coordinate $\xi_i$, $i=1,\cdots, 2n$, in it  should correspond to a probability measure $d\mu_{1,j}$, $j=1,\cdots, 2n$, with normalisation constant $\gamma_{n \gg 1, j}=\left(\gamma_{n \gg 1}\right)^{\frac{1}{2n}}$, so that 
    \[
    d\mu_1=d\mu_{1,1} \times \cdots \times d\mu_{1,2n}:= \left(\gamma_{n \gg 1, 1} e^{-\frac{\xi_{1}^{2}}{2}}d\xi_1\right)\times \cdots \times \left(\gamma_{n \gg 1, 2n} e^{-\frac{\xi_{2n}^{2}}{2}}d\xi_{2n}\right)\,.
    \]
    Hence, in order to be able to extend \eqref{gaus.log.sob} to $\mathbb{H}^{\infty}$ with infinite dimensional first stratum we must have $\lim\limits_{n \rightarrow \infty} \left(\gamma_{n \gg 1}\right)^{\frac{1}{2n}}=c$, for some $c>0$. Indeed we have 
    \begin{eqnarray}\label{ninfinity}
      \lim_{n \rightarrow \infty} \left(\gamma_{n \gg 1}\right)^{\frac{1}{2n}} & = & \lim_{n \rightarrow \infty} \left((2\pi)^{-n-\frac{1}{2}}n^{-\frac{1}{2}}\right)^{\frac{1}{2n}} \nonumber \\
      & = & (2\pi)^{-\frac{1}{2}}  \lim_{n \rightarrow \infty} (2\pi)^{-\frac{1}{4n}}n^{-\frac{1}{4n}} \nonumber\\
      & = & (2\pi)^{-\frac{1}{2}}  \lim_{n \rightarrow \infty}n^{-\frac{1}{4n}}= (2\pi)^{-\frac{1}{2}}\,,
    \end{eqnarray}
    since $\lim\limits_{n \rightarrow \infty}(2\pi)^{-\frac{1}{4n}}=1$ and \[\lim_{n \rightarrow \infty}n^{-\frac{1}{4n}}=\lim_{n \rightarrow \infty}e^{\log n^{-\frac{1}{4n}}}= \lim_{n \rightarrow \infty}e^{-\frac{1}{4n}\log n}=1\,. \]
Equality \eqref{ninfinity} shows not only that $\lim\limits_{n \rightarrow \infty} \left(\gamma_{n \gg 1}\right)^{\frac{1}{2n}}=\frac{1}{\sqrt{2\pi}}$, as required, but also it shows that the probability measure on the first stratum of $\H$ behaves in the limiting case when $n \rightarrow \infty$ exactly like $\mathbb{R}^{2n}$. Recall, see \cite{Gro92}, that the normalisation constant in the classical Gross inequality is $(2\pi)^{-\frac{1}{2n}}$ on the whole $\mathbb{R}^n$, meaning that the probability measure corresponding to each of the $n$ coordinates is $(2\pi)^{-\frac{1}{2}}$ with the measure $\prod\limits_{j=1}^{n}\left(\frac{1}{\sqrt{2\pi}}e^{x_{j}^{2}}dx_{j}\right)$; exactly as it happens in the case of $\H$ when $n \rightarrow \infty$.
\end{ob}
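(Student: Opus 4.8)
The plan is to treat Observation \ref{mainrem} as a purely asymptotic statement: I must extract the leading-order behaviour of the normalisation constant $\gamma$ from \eqref{k} as $n\to\infty$, confirm the Stirling asymptotic $\gamma_{n\gg1}\simeq (2\pi)^{-n-\frac12}n^{-\frac12}$, then compute the limit of the per-coordinate constant $(\gamma_{n\gg1})^{1/(2n)}$, and finally read off the measure-theoretic interpretation. Nothing here requires the log-Sobolev inequality itself; the content is an analysis of the explicit expression for $\gamma$.

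First I would rewrite $\gamma$ in a form adapted to Stirling's formula. Factoring $(2\pi n)^{-n-1}$ out of $\left(\frac{n+1}{2\pi n^2}\right)^{n+1}$ leaves the factor $\left(\frac{n+1}{n}\right)^{n+1}=\left(1+\frac1n\right)^{n+1}$, so that
\[
\gamma = n!\,e^{n-1}\,(2\pi n)^{-n-1}\left(1+\tfrac1n\right)^{n+1}.
\]
Substituting the Stirling asymptotic $n!\sim\sqrt{2\pi n}\,(n/e)^n$ and using $\left(1+\frac1n\right)^{n+1}\to e$, all powers of $e$ cancel (the $e^{-n}$ hidden in $(n/e)^n$ against the explicit $e^{n-1}$ and the limiting factor $e$), leaving
\[
\gamma \simeq \sqrt{2\pi n}\;n^{n}\,(2\pi n)^{-n-1} = (2\pi)^{-n-\frac12}\,n^{-\frac12},
\]
which is the first assertion. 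The only care needed here is exponent bookkeeping: tracking the powers of $2\pi$, of $n$, and of $e$ separately, so that the $n^n$ produced by Stirling exactly cancels the $n^{-n}$ contained in $(2\pi n)^{-n-1}$.

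Next I would raise this to the power $\frac1{2n}$ and pass to the limit. Writing $(\gamma_{n\gg1})^{\frac1{2n}}=(2\pi)^{-\frac{n+1/2}{2n}}\,n^{-\frac1{4n}}$, the first factor tends to $(2\pi)^{-1/2}$ since $\frac{n+1/2}{2n}\to\frac12$, while the second tends to $1$ because $n^{-1/(4n)}=\exp\!\left(-\frac{\log n}{4n}\right)$ and $\frac{\log n}{n}\to0$. This yields $\lim_{n\to\infty}(\gamma_{n\gg1})^{\frac1{2n}}=(2\pi)^{-1/2}$.

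Finally, for the interpretive conclusion I would note that the first stratum has topological dimension $2n$, so if the product measure $\mu_1$ is to factor into $2n$ identical one-dimensional Gaussians, each coordinate must carry normalisation $(\gamma)^{1/(2n)}$; the limit just computed shows this per-coordinate constant converges to $(2\pi)^{-1/2}$, which is exactly the normalisation of the standard one-dimensional Gaussian $\frac1{\sqrt{2\pi}}e^{-x_j^2/2}\,dx_j$ that appears in the classical Gross inequality. This identifies the limiting measure on the first stratum with the Gaussian on $\mathbb{R}^{2n}$ and legitimises the extension of \eqref{gaus.log.sob} to $\mathbb{H}^{\infty}$. I expect the genuine point — as opposed to the routine algebra — to be precisely this last identification: one must recognise that it is boundedness (indeed convergence) of the per-coordinate constant, rather than of $\gamma$ itself (which decays to $0$), that is the correct condition enabling an infinite-dimensional limit, exactly as in Gross's original passage from $\mathbb{R}^n$ to infinite dimensions.
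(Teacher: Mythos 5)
Your proposal is correct and follows essentially the same route as the paper: the identical rewriting $\gamma = n!\,e^{n-1}(2\pi n)^{-n-1}\left(1+\frac{1}{n}\right)^{n+1}$, the same application of Stirling's formula with the cancellation of the powers of $e$, and the same computation $\lim_{n\to\infty}(\gamma_{n\gg1})^{\frac{1}{2n}}=(2\pi)^{-\frac{1}{2}}$ via $n^{-\frac{1}{4n}}=e^{-\frac{\log n}{4n}}\to 1$, leading to the same per-coordinate Gaussian interpretation. Your closing remark correctly pinpoints the substantive content, namely that it is the convergence of the per-coordinate constant, not of $\gamma$ itself, that legitimises the passage to $\mathbb{H}^{\infty}$.
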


\begin{proof}[Proof of Theorem \ref{semi-g}]
Assume that $g\in C^{\infty}_{0}(\H)$ is a compactly supported function that satisfies $\|g\|_{L^2(\H,\mu)}=1,$ where $\mu$ is the measure given in the hypothesis. Let us define $f(x)$ by $$f(x)=\gamma^{\frac{1}{2}}e^{-\frac{|\xi|^2}{4}}g(x),\,\,\,\,x=(\xi,\tau)\in\H,$$ 
where $\gamma$ is as in \eqref{k} and $|\xi|^{2}=\sum\limits_{k=1}\limits^{2n}\xi^{2}_{k}$ for $\xi\in \mathbb{R}^{2n}$.  Then clearly $f\in C^{\infty}_{0}(\H)$, while it is easy to check that $\|f\|_{L^2(\H)}=1$.
Indeed we have 
 \begin{equation}\label{norms1}
1=\|g\|_{L^2(\H,\mu)}^2=\int_{\H}\gamma^{-1} e^{\frac{|\xi|^2}{2}}|f(x)|^2\,d\mu=\int_{\H}|f(x)|^2\,dx\,.
 \end{equation}
An application of the log-Sobolev inequality \eqref{LogSobolevint} yields 
\begin{eqnarray}\label{thm.eq.glogg}
\int_{\H}|g(x)|^2\log |g(x)|\,d\mu & = & \int_{\H} |f(x)|^2 \log |\gamma^{-\frac{1}{2}}e^{\frac{|\xi|^2}{4}}f(x)|\,dx \nonumber\\
& \leq & \frac{Q}{4}\log \left(C_{B,1} \int_{\H}|\nabla_{\H}f(x)|^2\,dx\right) \nonumber \\
& & +\log (\gamma^{-\frac{1}{2}})+\int_{\H}\frac{|\xi|^2}{4} |f(x)|^2\,dx\,.
\end{eqnarray}
For $\xi\in \mathbb{R}^{2n}$, it is worth recalling that
\[
X_{i}=\frac{\partial}{\partial\xi_{i}}+\frac{1}{2}\xi_{n+i}\frac{\partial}{\partial \tau}, \,\,\, Y_{i}=\frac{\partial}{\partial \xi_{n+i}}-\frac{1}{2}\xi_{i}\frac{\partial}{\partial \tau}\,, \quad  i=1,\ldots,n\,. 
\]
For each $i=1,\ldots,n$, we have
\begin{eqnarray}\label{Xg}
|X_ig(x)|^2 &=& \gamma^{-1} e^{\frac{|\xi|^2}{2}} \left|X_if(x)+\frac{\xi_{i}}{2}f(x) \right|^{2}\nonumber\\
&=& \gamma^{-1} e^{\frac{|\xi|^2}{2}} \left(|X_if(x)|^2+\frac{(\xi_{i})^2}{4}|f(x)|^2+{\rm Re} \overline{(X_if(x))}\xi_{i}f(x) \right)\,,
\end{eqnarray}
and 
\begin{eqnarray*}\label{Yg}
|Y_ig(x)|^2 &=& \gamma^{-1} e^{\frac{|\xi|^2}{2}} \left|Y_if(x)+\frac{\xi_{n+i}}{2}f(x) \right|^{2}\nonumber\\
&=& \gamma^{-1} e^{\frac{|\xi|^2}{2}} \left(|Y_if(x)|^2+\frac{(\xi_{n+i})^2}{4}|f(x)|^2+{\rm Re} \overline{(Y_if(x))}\xi_{n+i}f(x) \right)\,.
\end{eqnarray*}
Moreover, notice that for each $\xi_{i}$,  $i=1,\ldots,2n$, we have
\begin{eqnarray*}
{\rm Re}\int_{\H}\overline{(\partial_{\xi_{i}}f(x))}\xi_{i}f(x)\,dx & =& -{\rm Re}\int_{\H}(\partial_{\xi_{i}}f(x))\xi_{i}\overline{f(x)}\,dx-\int_{\H}|f(x)|^2\,dx\\
&=& -{\rm Re}\int_{\H}\overline{(\partial_{\xi_{i}}f(x))}\xi_{i}f(x)\,dx-1\,.
\end{eqnarray*}
The above computations imply that for each $i=1,\ldots,2n$ we have
\begin{equation}
    \label{int.parts1}
    {\rm Re}\int_{\H}\overline{(\partial_{\xi_{i}}f(x))}\xi_{i}f(x)\,d\xi d\tau=-\frac{1}{2}\,.
\end{equation}
To explicitly compute $\int_{\H}{\rm Re}\overline{(X_if(x))}\xi_{i}f(x)\,dx$ it remains to compute the term \[\frac{1}{2}{\rm Re} \int_{\H}\xi_{n+i}\overline{(\partial_{\tau}f(x))}\xi_{i}f(x)dx\,\,\,,i=1,\ldots,n.\] To this end we have
\begin{eqnarray*}
   \frac{1}{2}{\rm Re} \int_{\H}\xi_{n+i}\overline{(\partial_{\tau}f(x))}\xi_{i}f(x)dx& = & - \frac{1}{2}{\rm Re}\int_{\H}\partial_{\tau}((\xi_{n+i}f(x)\xi_{i})\overline{f(x)}\,dx\\
    &=&- \frac{1}{2}{\rm Re}\int_{\H}\xi_{n+i}\overline{(\partial_{\tau}f(x))}\xi_{i}f(x)\,dx\,,
\end{eqnarray*}
implying that 
\begin{equation}  \label{int.parts2}
   {\rm Re} \int_{\H}\xi_{n+i}\overline{(\partial_{\tau}f(x))}\xi_{i}f(x)\,dx=0,\,\,\,i=1,\ldots,n.
\end{equation}
A similar argument applies to $Y_i$.
Summarising, by \eqref{int.parts1} and \eqref{int.parts2} we get 
\[
\int_{\H}{\rm Re}\overline{(X_if(x))}\xi_{i}f(x)\,dx=-\frac{1}{2}\,,
\]
and similarly
\[
\int_{\H}{\rm Re}\overline{(Y_if(x))}\xi_{n+i}f(x)\,dx=-\frac{1}{2}\,,
\]
for each $i=1,\ldots,n$. By the above and expression \eqref{Xg} we get 
\begin{equation}\label{thm.eq.nablag}
\int_{\H}|\nabla_{\H}g(x)|^2\,d\mu=\int_{\H}|\nabla_{\H}f(x)|^2\,dx+\int_{\H}\frac{|\xi|^2}{4}|f(x)|^2\,dx-n\,.
\end{equation}
Now, taking into account \eqref{thm.eq.glogg} and \eqref{thm.eq.nablag}, we see that showing \eqref{gaus.log.sob} reduces to proving that 
\[
\frac{Q}{4}\log \left(C_{B,1} \int_{\H}|\nabla_{\H}f(x)|^2\,dx\right)+\log (\gamma^{-1/2}) \leq \int_{\H}|\nabla_{\H}f(x)|^2\,dx-n\,,
\]
or
\[
\frac{Q}{4}\log \left( C_{B,1} \int_{\H}|\nabla_{\H}f(x)|^2\,dx \right)+\log (\gamma^{-\frac{1}{2}}) + \log e^{n}\leq \int_{\H}|\nabla_{\H}f(x)|^2\,dx\,.
\]
The latter can be rewritten as 
\[
\frac{Q}{4} \log \left(C_{B,1} \gamma^{-\frac{2}{Q}} e^{\frac{4n}{Q}} \int_{\H}|\nabla_{\H}f(x)|^2\,dx  \right)\leq \int_{\H}|\nabla_{\H}f(x)|^2\,dx\,,
\]
or  as
\[
\log \left(C_{B,1} \gamma^{-\frac{2}{Q}} e^{\frac{4n}{Q}} \int_{\H}|\nabla_{\H}f(x)|^2\,dx  \right)\leq \frac{4}{Q} \int_{\H}|\nabla_{\H}f(x)|^2\,dx\,.  
\]
By the above and the property that for all $r>0$ we have $\log r \leq r-1$, we conclude that showing \eqref{gaus.log.sob} amounts to proving the following
\begin{equation}\label{s-gaus.f}
{e^{-1}}C_{B,1}\gamma^{-\frac{2}{Q}}e^{\frac{4n}{Q}} \int_{\H}|\nabla_{\H}f(x)|^2\,dx \leq \frac{4}{Q} \int_{\H}|\nabla_{\H}f(x)|^2\,dx\,.
\end{equation}
Indeed, observe that \eqref{s-gaus.f} holds true even as an equality for the choice of $\gamma$ as in \eqref{k}. Hence, we have shown the desired inequality, i.e., that 
\begin{equation*}
\begin{split}
&\log \left(C_{B,1} \gamma^{-\frac{2}{Q}} e^{\frac{4n}{Q}} \int_{\H}|\nabla_{\H}f(x)|^2\,dx  \right)\\&  =  \log \left(e^{-1} \frac{(n!)^{\frac{1}{n+1}}}{\pi n^{2}}\gamma^{-\frac{1}{n+1}}e^{\frac{2n}{n+1}} \int_{\H}|\nabla_{\H}f(x)|^2\,dx\right)+1\\&
 \leq  e^{-1} \frac{(n!)^{\frac{1}{n+1}}}{\pi n^{2}}\gamma^{-\frac{1}{n+1}}e^{\frac{2n}{n+1}} \int_{\H}|\nabla_{\H}f(x)|^2\,dx\\&
=  \frac{4}{Q}  \int_{\H}|\nabla_{\H}f(x)|^2\,dx,
\end{split}
\end{equation*}
and so we have shown \eqref{gaus.log.sob} for $g \in C_{0}^{\infty}(\H)$. The proof is now complete by the density of $C_{0}^{\infty}(\H)$ in $L^2(\H, \mu)$.
\end{proof}
\begin{rem}
  As mentioned earlier, the logarithmic Sobolev inequality with respect to a probability measure is a key inequality in the field of infinite dimensional analysis, see e.g. \cite{BZ05} and references therein. The associated terminology suggests the following expression for the Gaussian log-Sobolev inequality \eqref{gaus.log.sob} 
    \begin{equation}\label{ent.gross}
    \frac{1}{2}\text{Ent}(|g|^2)\leq \int_{\H} |\nabla_{\H}g|^2\,d\mu\,, \quad \text{for}\quad \|g\|_{L^2(\H,d\mu)}=1\,,
    \end{equation}
    where the entropy functional $\text{Ent}(g)$ for a measurable function $g$ is defined as 
    \[
    \text{Ent}(g):=\int g \log g \,d\mu-\int g \,d\mu \log \int g\,d\mu\,.
    \]
    It is then clear that for $g$ such that $\|g\|_{L^2(\H,d\mu)}=1$, we have 
    \[
    \text{Ent}(|g|^2)=2 \int_{\H}|g|^2\log|g|\,d\mu\,,
    \]
    implying that the inequalities \eqref{ent.gross} and \eqref{gaus.log.sob} are euivalent.
\end{rem}

\begin{thm}\label{equiv.thm}
Let  $d\mu$ be the semi-Gaussian measure given in the hypothesis of Theorem \ref{semi-g}. The following statements are both true and imply each other:
\begin{enumerate}[label=\roman*.]
    \item[(i)] \label{11i} for $g$ such that $\|g\|_{L^2(\H,\mu)}=1,$ we have  \begin{equation*}
    \label{thm.eq.itmq}
    \int_{\H}|g|^2 \log |g|\,d\mu \leq \int_{\H} |\nabla_{\H}g|^2\,d\mu\,;\end{equation*}
    \item[(ii)] \label{2i} for $f$ such that $\|f\|_{L^2(\H)}=1,$ we have \begin{equation*}\label{thm.eq.itm2}\int_{\H}|f|^{2}\log |f|\,dx \leq \frac{Q}{4}\log \left(C_{B,1} \int_{\H} |\nabla_{\H}f|^2\,dx \right)\,,\end{equation*} where $C_{B,1}$ is as in \eqref{bestintsob}.
\end{enumerate}

\end{thm}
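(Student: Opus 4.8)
The plan is to prove the two implications separately, exploiting the substitution already used in the proof of Theorem \ref{semi-g} together with a scaling optimisation in the reverse direction. The implication (ii)$\Rightarrow$(i) is essentially the content of that proof: given $g$ with $\|g\|_{L^2(\H,\mu)}=1$, I set $f=\gamma^{1/2}e^{-|\xi|^2/4}g$, so that $\|f\|_{L^2(\H)}=1$ by \eqref{norms1}. The identities \eqref{thm.eq.glogg} (its equality part) and \eqref{thm.eq.nablag} express $\int_\H|g|^2\log|g|\,d\mu$ and $\int_\H|\nabla_{\H}g|^2\,d\mu$ in terms of the corresponding quantities for $f$, the common weight $\tfrac14\int_\H|\xi|^2|f|^2\,dx$, and the constants $\log(\gamma^{-1/2})$ and $-n$. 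Feeding statement (ii) for $f$ into these identities, the weight terms cancel and the surviving inequality is precisely \eqref{s-gaus.f}, which holds by the choice of $\gamma$ in \eqref{k}; this yields (i).

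For the reverse implication (i)$\Rightarrow$(ii), I first run the same substitution in the other direction: given $f$ with $\|f\|_{L^2(\H)}=1$, set $g=\gamma^{-1/2}e^{|\xi|^2/4}f$, so that $\|g\|_{L^2(\H,\mu)}=1$. The same two identities let me rewrite the assumed inequality of (i), after cancelling the common weight $\tfrac14\int_\H|\xi|^2|f|^2\,dx$, as the additive-form bound
\[
\int_\H|f|^2\log|f|\,dx \le \int_\H|\nabla_{\H}f|^2\,dx - n + \tfrac12\log\gamma\,.
\]
The difficulty is that this additive bound looks weaker than the multiplicative logarithmic form of (ii): the loss incurred in the forward direction through the elementary step $\log r\le r-1$ must be recovered. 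The device is a dilation optimisation. For $t>0$ let $f_t(x)=t^{Q/2}f(\delta_t x)$, where $\delta_t(\xi,\tau)=(t\xi,t^2\tau)$ is the dilation on $\H$, so that $\|f_t\|_{L^2(\H)}=1$, while $\int_\H|\nabla_{\H}f_t|^2\,dx=t^2\int_\H|\nabla_{\H}f|^2\,dx$ and $\int_\H|f_t|^2\log|f_t|\,dx=\tfrac{Q}{2}\log t+\int_\H|f|^2\log|f|\,dx$, using the homogeneity $Q$ of the Haar measure and $1$ of $\nabla_{\H}$.

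Applying the additive bound to $f_t$ and isolating the entropy term gives, with $A:=\int_\H|\nabla_{\H}f|^2\,dx$,
\[
\int_\H|f|^2\log|f|\,dx \le t^2A-\tfrac{Q}{2}\log t-n+\tfrac12\log\gamma\,,
\]
valid for every $t>0$. The last step is to minimise the right-hand side over $t$; the optimal choice $t^2=Q/(4A)$ turns the bound into the form $\tfrac{Q}{4}\log A + c_n$ for an explicit constant $c_n=\tfrac{Q}{4}-\tfrac{Q}{4}\log\tfrac{Q}{4}-n+\tfrac12\log\gamma$, which reproduces exactly the gradient-logarithm of (ii). I expect the final constant-matching — verifying $c_n=\tfrac{Q}{4}\log C_{B,1}$ by a direct computation using $Q=2(n+1)$, the value of $\gamma$ in \eqref{k}, and that of $C_{B,1}$ in \eqref{bestintsob} — to be the only delicate point. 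The scaling optimisation is what converts the one-sided step $\log r\le r-1$ into a genuine equivalence, and the special value of $\gamma$ is precisely what makes the two constants agree.
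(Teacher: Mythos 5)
Your proposal is correct and follows essentially the same route as the paper: (ii)$\Rightarrow$(i) is the proof of Theorem \ref{semi-g}, and (i)$\Rightarrow$(ii) uses the inverse substitution $g=\gamma^{-1/2}e^{|\xi|^2/4}f$ to obtain the additive bound \eqref{before.eps}, followed by the dilation $f_\epsilon=\epsilon^{Q/2}f\circ\delta_\epsilon$ and minimisation at $\epsilon^2=Q/(4\int_{\H}|\nabla_{\H}f|^2\,dx)$. The constant-matching you flag as the delicate point does work out, exactly as in the paper's final computation yielding $\frac{Q}{4}\log(C_{B,1}\int_{\H}|\nabla_{\H}f|^2\,dx)$.
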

\begin{proof} The implication (ii) $\Rightarrow$ (i) is exactly the proof of Theorem \ref{semi-g}. Hence we will show that (i) $\Rightarrow$ (ii).
For $\gamma$ as in \eqref{k} and for $f$ as in (i), we define $g$ by 
\[
g(x)=\gamma^{-\frac{1}{2}}e^{\frac{|\xi|^2}{4}}f(x)\,.
\]
It is then clear that $\|g\|_{L^2(\H,\mu)}=1$, see \eqref{norms1}. We also have
\[
\int_{\H}|g(x)|^2\log |g(x)|\,d\mu  =  \int_{\H} |f(x)|^2 \log |\gamma^{-\frac{1}{2}}e^{\frac{|\xi|^2}{4}}f(x)|\,dx\,.
\]
Hence by the assumptions in (ii) and equality \eqref{thm.eq.nablag} we get 
\begin{equation}\label{before.eps}
\log (\gamma^{-\frac{1}{2}}e^{n})+\int_{\H}|f(x)|^2\log |f(x)|\,dx \leq \int_{\H} |\nabla_{\H}f(x)|^2\,dx\,,
\end{equation}
for any function $f$ such that $\|f\|_{L^2(\H)}=1$. Now, if for $\epsilon>0$ the mapping $\delta_\epsilon: \H \rightarrow \H$ denotes the dilations on $\H$, i.e., for $x=(\xi, \tau) \in \H$ we have $\delta_\epsilon(\xi,\tau)=(\epsilon \xi, \epsilon^2 \tau)$, then for $f_\epsilon(x):=\epsilon^{\frac{Q}{2}}f(\delta_{\epsilon}(x))$ we still have $\|f_{\epsilon}\|_{L^2(\H)}=1$. 
Now, since 
\[
\int_{\H}|f_\epsilon|^2 \log|f_\epsilon|\,dx=\int_{\H}|f|^2 \log\left|\epsilon^{\frac{Q}{2}}f \right|\,dx\,,
\]
and 
\[
\int_{\H}|\nabla_{\H}f_\epsilon|^2\,dx=\epsilon^2 \int_{\H}|\nabla_{\H}f|^2\,dx\,,
\]
an application of \eqref{before.eps} to the function $f_\epsilon$ yields
\begin{equation}\label{after.eps}
    \begin{split}
\int_{\H}&|f|^2\log |f|\,dx  \leq  \epsilon^2 \int_{\H} |\nabla_{\H}f|^2\,dx-\frac{Q}{2}\log \epsilon-\log (\gamma^{-\frac{1}{2}}e^{n})\\&
=\epsilon^2 \int_{\H} |\nabla_{\H}f|^2\,dx-\frac{Q}{2}\log \epsilon-\log \left(\left[n!e^{n-1}\left(\frac{(n+1)}{2\pi n^{2}}\right)^{n+1}\right]^{-\frac{1}{2}}e^{n} \right)\\&
 =\epsilon^2 \int_{\H} |\nabla_{\H}f|^2\,dx-\frac{Q}{2}\log \epsilon+\frac{n+1}{2} \log \left(\frac{(n+1)(n!)^{\frac{1}{n+1}}}{2\pi n^{2}e} \right)        
    \end{split}
\end{equation}
    for all $\epsilon>0$, where in the last inequality we have plugged in the expression for $\gamma$. Finally, minimizing the  right-hand side of \eqref{after.eps} over $\epsilon$ gives the desired inequality. Indeed for $\epsilon=\sqrt{\frac{Q}{4 \int_{\H}|\nabla_{\H}f(x)|^2\,dx}}$ we have
    \begin{eqnarray*}
    \int_{\H}|f|^2\log |f|\,dx & \leq & 
    \frac{Q}{4}-\frac{Q}{2}\log \left( \sqrt{\frac{Q}{4 \int_{\H}|\nabla_{\H}f(x)|^2\,dx}}\right)\\
    &+&\frac{n+1}{2} \log \left(\frac{(n+1)(n!)^{\frac{1}{n+1}}}{2\pi n^{2}e} \right)\\
& = &     \frac{n+1}{2}+\frac{n+1}{2} \log \left(4 \int_{\H}|\nabla_{\H}f|^2\,dx \right)- \frac{n+1}{2}\log 2(n+1)\\
    &+&\frac{n+1}{2} \log \left(\frac{(n+1)(n!)^{\frac{1}{n+1}}}{2\pi n^{2}e} \right)\\
    & = & \frac{n+1}{2} \log \left(\frac{(n!)^{\frac{1}{n+1}}}{\pi n^{2}}  \int_{\H}|\nabla_{\H}f|^2\,dx \right)\,\\
    &=&\frac{Q}{4}\log \left(C_{B,1}  \int_{\H}|\nabla_{\H}f|^2\,dx \right)\,,
    \end{eqnarray*}
    and this shows that (i) implies (ii). The proof of Theorem \ref{equiv.thm} is now complete.
\end{proof}

\section{(logarithmic) Gagliardo-Nirenberg inequalities on $\H$}

In this section, we prove the Gagliardo-Nirenberg inequality on $\H$, as well the ``the derivates-type Gagliardo-Nirenberg inequality'' on $\H$. As earlier in Section \ref{sec:LS}, we prove the aforementioned inequalities also with respect to the modified fractional sub-Laplacian $\Ls$ on $\H$.
Let us first recall the next auxiliary result as in \cite{CKR21c}.
\begin{lem}[An interpolation inequality for general measure spaces]
Let $\mathbb{X}$ be any measure space, and let $1<\theta_1\leq q\leq \theta_2\leq\infty$. Then for any  $u\in L^{\theta_1}(\mathbb{X})\cap L^{\theta_2}(\mathbb{X})$ with
\begin{equation*}
\frac{1}{q}=\frac{a}{\theta_1}+\frac{1-a}{\theta_2}\,,
\end{equation*}
for some $a \in [0,1]$, we have 
\begin{equation}\label{holder.gen}
\|u\|_{L^{q}(\mathbb{X})}\leq \|u\|^{a}_{L^{\theta_1}(\mathbb{X})}\|u\|^{1-a}_{L^{\theta_2}(\mathbb{X})}.
\end{equation}
    
\end{lem}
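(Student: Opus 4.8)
The plan is to obtain \eqref{holder.gen} as a direct consequence of H\"older's inequality, using the elementary pointwise factorisation
\[
|u|^{q}=|u|^{qa}\,|u|^{q(1-a)},
\]
which splits the integrand into two pieces whose exponents will be matched to the two given Lebesgue spaces $L^{\theta_1}(\X)$ and $L^{\theta_2}(\X)$.

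First I would dispose of the degenerate cases. If $a=1$ then the constraint forces $q=\theta_1$ and the claimed inequality is an identity; likewise $a=0$ gives $q=\theta_2$. Hence I may assume $a\in(0,1)$, so that both factors above are genuinely present and the candidate exponents below are positive and finite.

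Assuming first that $\theta_2<\infty$, I would apply H\"older's inequality to the product $|u|^{qa}\cdot|u|^{q(1-a)}$ with the pair of conjugate exponents $p_1=\tfrac{\theta_1}{qa}$ and $p_2=\tfrac{\theta_2}{q(1-a)}$. The defining relation $\tfrac1q=\tfrac{a}{\theta_1}+\tfrac{1-a}{\theta_2}$ is exactly what guarantees $\tfrac1{p_1}+\tfrac1{p_2}=1$, so the exponents are admissible. Integrating, the powers collapse via $qa\,p_1=\theta_1$ and $q(1-a)\,p_2=\theta_2$, yielding
\[
\int_{\X}|u|^{q}\,dx\leq\left(\int_{\X}|u|^{\theta_1}\,dx\right)^{\frac{qa}{\theta_1}}\left(\int_{\X}|u|^{\theta_2}\,dx\right)^{\frac{q(1-a)}{\theta_2}},
\]
and raising both sides to the power $1/q$ produces precisely \eqref{holder.gen}.

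Finally I would treat the endpoint $\theta_2=\infty$ by hand: here the constraint reads $\tfrac1q=\tfrac{a}{\theta_1}$, so $q=\theta_1/a$, and one bounds the factor $|u|^{q(1-a)}$ pointwise by $\|u\|^{q(1-a)}_{L^{\infty}(\X)}$ before integrating the remaining factor $|u|^{qa}=|u|^{\theta_1}$; the same arithmetic then delivers the inequality. There is no substantial obstacle in this argument; the only point requiring genuine care is checking that the hypothesis on $a$ is exactly the conjugacy condition $\tfrac1{p_1}+\tfrac1{p_2}=1$ for the chosen exponents, together with the separate verification of the $\theta_2=\infty$ endpoint.
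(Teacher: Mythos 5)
Your proof is correct: the degenerate cases $a=0,1$, the conjugacy check $\tfrac{qa}{\theta_1}+\tfrac{q(1-a)}{\theta_2}=1$, and the separate treatment of $\theta_2=\infty$ are all exactly what is needed, and the exponents $p_1=\tfrac{\theta_1}{qa}$, $p_2=\tfrac{\theta_2}{q(1-a)}$ are admissible since the defining relation forces $qa\leq\theta_1$ and $q(1-a)\leq\theta_2$. The paper itself does not prove this lemma but merely recalls it from \cite{CKR21c}; your argument is the standard H\"older interpolation proof on which that statement rests, so there is nothing to compare beyond noting that you have supplied the proof the paper omits.
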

\begin{thm}[Gagliardo-Nirenberg inequality]\label{GN} Let $s\in(0,1]$ and $Q>2s$ satisfy the relation $\frac{1}{q}=a\left(\frac{1}{2}-\frac{s}{Q}\right)+\frac{1-a}{\sigma}$,  for some $\sigma\geq q\geq \frac{2Q}{Q-2s}$ and $a \in (0,1]$.
Then the Gagliardo-Nirenberg inequality on $\H$ with respect to the (modified) sub-Laplacian is given by
\begin{equation}\label{gnin1}
    \int_{\H}|u(x)|^{q}dx \leq C^{\frac{aq}{2}}_{B,s}\langle \Ls u,u\rangle_{L^{2}(\H)}^{\frac{aq}{2}}\|u\|^{(1-a)q}_{L^{\sigma}(\H)}\,,
\end{equation}
where $C_{B,s}$ is as in \eqref{bestfrac}.
Moreover, we can rewrite the inequality \eqref{gnin1} with respect to fractional powers of the sub-Laplacian $\Ls$ as follows
\begin{equation}\label{gnin2}
    \int_{\H}|u(x)|^{q}dx \leq C^{\frac{aq}{2}}_{B,s}\|U_{s}\|^{\frac{aq}{2}}_{\rm op}\langle \Lss u,u\rangle_{L^{2}(\H)}^{\frac{aq}{2}}\|u\|^{(1-a)q}_{L^{\sigma}(\H)},
\end{equation}
where  the operator norm $\|U_{s}\|_{\rm op}$ has been estimated in \eqref{est1}.
\end{thm}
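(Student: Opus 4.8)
The plan is to derive \eqref{gnin1} directly by interpolating between the critical Sobolev exponent and the exponent $\sigma$, and then feeding the resulting Sobolev factor into Theorem \ref{thm1}. Both ingredients are already in place: the interpolation inequality \eqref{holder.gen} for general measure spaces, and the fractional Sobolev inequality \eqref{1b}.

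First I would apply \eqref{holder.gen} on $\X=\H$ with the endpoint exponents $\theta_1=\frac{2Q}{Q-2s}$ and $\theta_2=\sigma$. The crucial observation is that $\frac{1}{\theta_1}=\frac{Q-2s}{2Q}=\frac12-\frac{s}{Q}$, so that the interpolation identity $\frac1q=\frac{a}{\theta_1}+\frac{1-a}{\theta_2}$ coincides \emph{verbatim} with the standing hypothesis $\frac1q=a\left(\frac12-\frac{s}{Q}\right)+\frac{1-a}{\sigma}$. The admissibility conditions of the lemma, namely $1<\theta_1\le q\le\theta_2\le\infty$, are exactly the assumptions $\frac{2Q}{Q-2s}\le q\le\sigma$ together with $\frac{2Q}{Q-2s}>1$, which holds since $Q>2s$. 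Thus \eqref{holder.gen} yields
\[
\|u\|_{L^q(\H)}\le \|u\|_{L^{\frac{2Q}{Q-2s}}(\H)}^{a}\,\|u\|_{L^{\sigma}(\H)}^{1-a}.
\]

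Next I would raise this to the power $q$ and estimate the first factor by the fractional Sobolev inequality \eqref{1b}. Writing $\|u\|_{L^{\frac{2Q}{Q-2s}}}^{aq}=\big(\|u\|_{L^{\frac{2Q}{Q-2s}}}^{2}\big)^{aq/2}$ and applying $\|u\|_{L^{\frac{2Q}{Q-2s}}}^{2}\le C_{B,s}\langle\Ls u,u\rangle_{L^2(\H)}$ gives precisely
\[
\int_{\H}|u(x)|^{q}\,dx=\|u\|_{L^q(\H)}^{q}\le C_{B,s}^{\frac{aq}{2}}\,\langle\Ls u,u\rangle_{L^2(\H)}^{\frac{aq}{2}}\,\|u\|_{L^\sigma(\H)}^{(1-a)q},
\]
which is \eqref{gnin1}. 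For the second assertion \eqref{gnin2} I would simply repeat the last step using the equivalent form \eqref{2b} of the Sobolev inequality, namely $\|u\|_{L^{\frac{2Q}{Q-2s}}}^{2}\le C_{B,s}\|U_s\|_{\rm op}\langle\Lss u,u\rangle_{L^2(\H)}$, which introduces the extra factor $\|U_s\|_{\rm op}^{aq/2}$, with $\|U_s\|_{\rm op}$ bounded as in \eqref{est1}.

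There is no substantive obstacle here: the argument is a bookkeeping combination of two quoted inequalities. The only point requiring genuine care is verifying that the choice $\theta_1=\frac{2Q}{Q-2s}$ makes the interpolation exponent identity agree \emph{on the nose} with the hypothesis on $q$, and that $\frac{2Q}{Q-2s}\le q\le\sigma$ guarantees the ordering $\theta_1\le q\le\theta_2$ needed to legitimately invoke \eqref{holder.gen}. As a consistency check, the endpoint case $a=1$ forces $q=\frac{2Q}{Q-2s}$ and renders $\sigma$ irrelevant, so that \eqref{gnin1} simply recovers Theorem \ref{thm1}.
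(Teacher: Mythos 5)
Your proof is correct and follows essentially the same route as the paper: the paper likewise splits $|u|^q=|u|^{aq}|u|^{(1-a)q}$, applies the interpolation inequality \eqref{holder.gen} with endpoints $\frac{2Q}{Q-2s}$ and $\sigma$, and then bounds the critical Sobolev factor via \eqref{1b} (respectively \eqref{2b} for the second inequality). Your explicit verification of the admissibility conditions for \eqref{holder.gen} is a welcome, if minor, addition.
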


\begin{proof}
    By using the H\"{o}lder inequality \eqref{holder.gen}, and the fractional Sobolev inequality as in \eqref{1b}, we have
    \begin{equation*}
        \begin{split}
             \int_{\H}|u(x)|^{q}dx&=   \int_{\H}|u(x)|^{aq}|u(x)|^{(1-a)q}dx\\&
             \leq\|u\|^{aq}_{L^{2^{*}}(\H)}\|u\|^{(1-a)q}_{L^{\sigma}(\H)} \\&
             \leq C^{\frac{aq}{2}}_{B,s}\langle \Ls u,u\rangle_{L^{2}(\H)}^{\frac{aq}{2}}\|u\|^{(1-a)q}_{L^{\sigma}(\H)}\,,
        \end{split}
    \end{equation*}
    and we have proved inequality \eqref{gnin1}. Inequality \eqref{gnin2} follows now by \eqref{gnin1} and the definition of the operator $U_s$ as in \eqref{Us}.
\end{proof}
One can repeat the main arguments and show the following generalisation of \eqref{gnin1}.  
\begin{thm}[Two derivative-type Gagliardo-Nirenberg inequality] 
 Let $1\geq s_{1}\geq s_{2}\geq0$ be such that $\frac{2Q}{Q-2s_{2}}\leq q\leq \frac{2Q}{Q-2s_{1}}$. Then, the Gagliardo-Nirenberg inequality with two derivatives on $\H$ is as follows 
\begin{equation}\label{GN4}
    \int_{\H}|u(x)|^{q}dx\leq C_{GN,s_1,s_2}\langle\mathcal{L}_{s_{1}}u,u\rangle_{L^{2}(\H)}^{\frac{Q(q-2)-2s_{2}q}{4(s_{1}-s_{2})}}\langle\mathcal{L}_{s_{2}}u,u\rangle_{L^{2}(\H)}^{\frac{s_{1}(q-2)-Q(q-2)}{4(s_{1}-s_{2})}},
\end{equation}
    where $C_{GN,s_1,s_2}=C_{B,s_1}^{\frac{qa}{2}} C_{B,s_2}^{\frac{q(1-a)}{2}}$ for $a=\frac{Q(q-2)-2s_{2}q}{2(s_{1}-s_{2})q}$, where $C_{B,s_i}$, $i=1,2$, is given in \eqref{bestfrac}. Additionally, the modified version of it is given by
    \begin{equation}\label{GN5}
    \begin{split}
         \int_{\H}|u(x)|^{q}dx&\leq C_{GN,s_1,s_2}\|U_{s_1}\|^{\frac{Q(q-2)-2s_{2}q}{4(s_{1}-s_{2})}}_{\rm op}\|U_{s_2}\|^{\frac{s_{1}(q-2)-Q(q-2)}{4(s_{1}-s_{2})}}_{\rm op}\\&
         \times\langle\mathcal{L}^{s_{1}}u,u\rangle_{L^{2}(\H)}^{\frac{Q(q-2)-2s_{2}q}{4(s_{1}-s_{2})}}\langle\mathcal{L}^{s_{2}}u,u\rangle_{L^{2}(\H)}^{\frac{s_{1}(q-2)-Q(q-2)}{4(s_{1}-s_{2})}}\,,
    \end{split}
\end{equation}
where the norms $\|U_{s_i}\|_{\rm op}$ for $i=1,2$ are given in \eqref{est1}.
\end{thm}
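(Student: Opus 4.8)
The plan is to mirror the proof of the single-derivative Gagliardo--Nirenberg inequality (equations \eqref{gnin1}--\eqref{gnin2}), except that now I interpolate between the \emph{two} distinct Sobolev endpoints attached to $s_1$ and $s_2$, rather than between one Sobolev endpoint and a fixed $L^\sigma$-norm. Assume $s_1>s_2$; the degenerate case $s_1=s_2$ simply collapses the admissible range of $q$ to the single point $\frac{2Q}{Q-2s_1}$ and reduces to \eqref{1b}. Write $2^{*}_{s_i}:=\frac{2Q}{Q-2s_i}$ for the two critical exponents. Since $s_1>s_2$ we have $2^{*}_{s_2}<2^{*}_{s_1}$, and the hypothesis $\frac{2Q}{Q-2s_2}\leq q\leq \frac{2Q}{Q-2s_1}$ says exactly that $q$ lies between them.

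First I would apply the interpolation inequality \eqref{holder.gen} with the endpoint exponents $\theta_1=2^{*}_{s_2}$ and $\theta_2=2^{*}_{s_1}$, which is legitimate because $2^{*}_{s_2}\leq q\leq 2^{*}_{s_1}$ and $2^{*}_{s_2}>1$. This gives
\[
\|u\|_{L^{q}(\H)}\leq \|u\|_{L^{2^{*}_{s_1}}(\H)}^{a}\,\|u\|_{L^{2^{*}_{s_2}}(\H)}^{1-a}\,,
\]
where the weight $a$ is fixed by $\frac{1}{q}=a\frac{Q-2s_1}{2Q}+(1-a)\frac{Q-2s_2}{2Q}$. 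Solving this linear relation yields $a=\frac{Q(q-2)-2s_2 q}{2(s_1-s_2)q}$, precisely the value quoted in the statement, and the two defining inequalities for $q$ translate into $0\leq a\leq 1$.

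Next I would raise this to the power $q$ and insert the fractional Sobolev inequality \eqref{1b} at each endpoint: with $s=s_1$ it gives $\|u\|_{L^{2^{*}_{s_1}}}^{2}\leq C_{B,s_1}\langle\mathcal{L}_{s_1}u,u\rangle_{L^2(\H)}$, and with $s=s_2$ it gives $\|u\|_{L^{2^{*}_{s_2}}}^{2}\leq C_{B,s_2}\langle\mathcal{L}_{s_2}u,u\rangle_{L^2(\H)}$ (both endpoints being covered, with the convention that at $s_2=0$ one has $2^{*}_{s_2}=2$, $C_{B,0}=1$ and $\mathcal{L}_0=\mathrm{Id}$). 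Raising these to the powers $\frac{aq}{2}$ and $\frac{(1-a)q}{2}$ and multiplying produces \eqref{GN4} with $C_{GN,s_1,s_2}=C_{B,s_1}^{qa/2}C_{B,s_2}^{q(1-a)/2}$. The one genuine computation is the bookkeeping: substituting the value of $a$ shows $\frac{aq}{2}=\frac{Q(q-2)-2s_2 q}{4(s_1-s_2)}$, which is the exponent on $\langle\mathcal{L}_{s_1}u,u\rangle$, while $\frac{(1-a)q}{2}=\frac{2s_1 q-Q(q-2)}{4(s_1-s_2)}$ is the exponent on $\langle\mathcal{L}_{s_2}u,u\rangle$; both are nonnegative exactly by the lower and upper constraints on $q$, respectively.

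Finally, to obtain the version \eqref{GN5} in terms of the pure fractional powers $\mathcal{L}^{s_i}$, I would argue as in the passage from \eqref{gnin1} to \eqref{gnin2}: using $U_{s_i}=\mathcal{L}_{s_i}(\mathcal{L}^{s_i})^{-1}$ I replace each $\langle\mathcal{L}_{s_i}u,u\rangle$ by $\langle\mathcal{L}^{s_i}u,u\rangle$ at the cost of a factor $\|U_{s_i}\|_{\mathrm{op}}$ raised to the corresponding exponent, and then invoke the bound \eqref{est1}. I do not expect a substantial obstacle anywhere, since the whole argument is a two-sided interpolation between two applications of \eqref{1b}; the only care required is tracking the weight $a$ and checking the endpoint $s_2=0$, where $\mathcal{L}_{s_2}$ and $U_{s_2}$ degenerate harmlessly to the identity.
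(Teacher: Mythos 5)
Your proof is correct and follows essentially the same route as the paper's: check that $a=\frac{Q(q-2)-2s_2q}{2(s_1-s_2)q}\in[0,1]$, interpolate between the two critical exponents $\frac{2Q}{Q-2s_1}$ and $\frac{2Q}{Q-2s_2}$ via \eqref{holder.gen}, apply the fractional Sobolev inequality \eqref{1b} at each endpoint, and pass to \eqref{GN5} through $U_{s_i}$. Your value $\frac{(1-a)q}{2}=\frac{2s_1q-Q(q-2)}{4(s_1-s_2)}$ for the second exponent is the correct one, which indicates that the exponent $\frac{s_1(q-2)-Q(q-2)}{4(s_1-s_2)}$ printed in the theorem (and repeated in the paper's proof) is a typo.
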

\begin{proof}
First, let us show that $a=\frac{Q(q-2)-2s_{2}q}{2(s_{1}-s_{2})q}\in [0,1]$. To this end, we define the function \[A(q)=\frac{Q(q-2)-2s_{2}q}{2(s_{1}-s_{2})q}=\frac{Q-2s_2}{2(s_1-s_2)}-\frac{Q}{(s_1-s_2)}\frac{1}{q}\,,\] for $\frac{2Q}{Q-2s_{2}}\leq q\leq \frac{2Q}{Q-2s_{1}}$ as in the hypothesis. It is then easy to see that since $A'(q)=\frac{Q}{s_1-s_2}\frac{1}{q^2} >0$, we have that 
\[
0=A\left( \frac{2Q}{Q-2s_{2}}\right) \leq A(q) \leq A\left( \frac{2Q}{Q-2s_{1}}\right)=1\,.
\]
and this proves our claim. 
    By the H\"older inequality \eqref{holder.gen} with $\theta_{1}=\frac{2Q}{Q-2s_{1}}$, $\theta_{2}=\frac{2Q}{Q-2s_{2}}$ and $a=\frac{Q(q-2)-2s_{2}q}{2(s_{1}-s_{2})q}\in [0,1]$, and Sobolev inequality \eqref{1b} we get
    \begin{equation}
        \begin{split}
            \int_{\H}|u(x)|^{q}dx&=\int_{\H}|u(x)|^{aq}|u(x)|^{(1-a)q}dx\\&
        \leq \left(\int_{\H}|u(x)|^{\theta_{1}}dx\right)^{\frac{qa}{\theta_{1}}}\left(\int_{\H}|u(x)|^{\theta_{2}}dx\right)^{\frac{q(1-a)}{\theta_{2}}}\\&
        \leq C_{B,s_1}^{\frac{qa}{2}}\langle\mathcal{L}_{s_{1}}u,u\rangle_{L^{2}(\H)}^{\frac{qa}{2}}C_{B,s_2}^{\frac{q(1-a)}{2}}\langle\mathcal{L}_{s_{2}}u,u\rangle_{L^{2}(\H)}^{\frac{q(1-a)}{2}}\\&
        =C_{GN,s_1,s_2}\langle\mathcal{L}_{s_{1}}u,u\rangle_{L^{2}(\H)}^{\frac{Q(q-2)-2s_{2}q}{4(s_{1}-s_{2})}}\langle\mathcal{L}_{s_{2}}u,u\rangle_{L^{2}(\H)}^{\frac{s_{1}(q-2)-Q(q-2)}{4(s_{1}-s_{2})}}\,,
        \end{split}
    \end{equation}
    where we have defined $C_{GN,s_1,s_2}:=C_{B,s_1}^{\frac{qa}{2}} C_{B,s_2}^{\frac{q(1-a)}{2}}$ and we have shown \eqref{GN4}. Inequality \eqref{GN5} follows.
\end{proof}

\begin{rem}[Sharp version of Gagliardo-Nirenberg inequalities on $\H$]\label{rem.GN}
The appearing constants in the Gagliardo-Nirenberg inequalities on $\H$ may be in general not sharp. However, in view of Remark  6.2 in \cite{RTY20} that relates the best constant in the Sobolev inequality with the one in the Gagliardo-Nirenberg inequality, the Gagliardo-Nirenberg inequality \eqref{GN4} when $s_1=1$, $s_2=0$ and $2\leq q \leq \frac{2Q}{Q-2}$ is sharp and the best constant $C_{GN,1,0}$ is given by
\[
C_{GN,1,0}=\left[C_{B,1}\frac{2q}{2q-Q(q-2)}\left( \frac{Q(q-2)}{2q-Q(q-2)}\right) \right]^{\frac{q}{2}}\,,
\]
where $C_{B,1}$ is the best constant in the Sobolev inequality on $\H$ given by \eqref{bestintsob}.
\end{rem}
The logarithmic version of the Galirado-Nirenberg inequalities \eqref{gnin1} and \eqref{gnin2} are given in the next theorem. Following similar arguments, one can get the logarithmic versions of the inequalities \eqref{GN4} and \eqref{GN5}. The latter is a simple exercise and its details are left for the interested reader.  
\begin{thm}[Logarithmic Gagliardo-Nirenberg inequality]\label{thmlogGN}
 Let $s\in(0,1]$ and $Q>2s$ satisfy the relation $\frac{1}{q}=a\left(\frac{1}{2}-\frac{s}{Q}\right)+\frac{1-a}{\sigma}$,  for some $\sigma\geq q\geq \frac{2Q}{Q-2s}$ and $a \in (0,1]$. Then the logarithmic version of the Gagliardo-Nirenberg inequality with respect to the (modified) fractional sub-Laplacian is as follows
   \begin{equation}\label{LogGN1}
\int_{\H}\frac{|u|^{2}}{\|u\|^{2}_{L^{2}(\H)}}\log\left(\frac{|u|^{2}}{\|u\|^{2}_{L^{2}(\H)}}\right) dx \leq \frac{q}{q-2}\log\left(C_{B,s}^{a}\frac{\langle \Ls u,u\rangle^{a}_{L^{2}(\H)}\|u\|^{2(1-a)}_{L^{\sigma}(\H)}}{\|u\|^{2}_{L^{2}(\H)}}\right)\,.
\end{equation}
Alternatively, inequality \eqref{LogGN1} can be expressed with respect to fractional powers of the sub-Laplacian via   
    \begin{equation}\label{LogGN2}
\int_{\H}\frac{|u|^{2}}{\|u\|^{2}_{L^{2}(\H)}}\log\left(\frac{|u|^{2}}{\|u\|^{2}_{L^{2}(\H)}}\right) dx \leq \frac{q}{q-2}\log\left(C_{B,s}^{a}\|U_{s}\|^{a}_{\rm{op}}\frac{\langle \Lss u,u\rangle^{a}_{L^{2}(\H)}\|u\|^{2(1-a)}_{L^{\sigma}(\H)}}{\|u\|^{2}_{L^{2}(\H)}}\right),
\end{equation}
where the estimate for the norm $\|U_{s}\|_{\text{op}}$ is given by \eqref{est1}.

\end{thm}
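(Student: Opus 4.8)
The plan is to derive Theorem \ref{thmlogGN} by combining the logarithmic H\"older inequality of Lemma \ref{holder} with the Gagliardo-Nirenberg inequality \eqref{gnin1} of Theorem \ref{GN}, in exactly the same spirit as the proof of the logarithmic Sobolev inequality in Theorem \ref{thm:Logsob}. The whole argument is a chaining of two already established results together with the monotonicity of the logarithm, so no genuinely new estimate is needed.

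First I would apply Lemma \ref{holder} with $p=2$ and with the exponent $q$ from the hypothesis. Since $q\geq \frac{2Q}{Q-2s}>2$, the requirement $1<p<q<\infty$ holds, and assuming $u\in L^{2}(\H)\cap L^{q}(\H)\setminus\{0\}$ the lemma gives
\[
\int_{\H}\frac{|u|^{2}}{\|u\|^{2}_{L^{2}(\H)}}\log\left(\frac{|u|^{2}}{\|u\|^{2}_{L^{2}(\H)}}\right)dx\leq \frac{q}{q-2}\log\left(\frac{\|u\|^{2}_{L^{q}(\H)}}{\|u\|^{2}_{L^{2}(\H)}}\right).
\]

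Next I would take the Gagliardo-Nirenberg inequality \eqref{gnin1}, extract the $q$-th root and square it to obtain
\[
\|u\|^{2}_{L^{q}(\H)}\leq C_{B,s}^{a}\langle \Ls u,u\rangle_{L^{2}(\H)}^{a}\|u\|^{2(1-a)}_{L^{\sigma}(\H)},
\]
and then substitute this upper bound for $\|u\|^{2}_{L^{q}(\H)}$ inside the logarithm in the previous display. Because $\log$ is increasing, this substitution preserves the inequality and yields precisely \eqref{LogGN1}. For the alternative form \eqref{LogGN2}, I would replace the factor $\langle \Ls u,u\rangle^{a}_{L^{2}(\H)}$ using the bound $\langle \Ls u,u\rangle_{L^{2}(\H)}\leq \|U_{s}\|_{\rm op}\langle \Lss u,u\rangle_{L^{2}(\H)}$, which is the same relation $U_{s}=\Ls(\Lss)^{-1}$ underlying the passage from \eqref{1b} to \eqref{2b} in Theorem \ref{thm1}; again monotonicity of the logarithm gives the claim, with $\|U_{s}\|_{\rm op}$ estimated by \eqref{est1}.

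There is no real obstacle here, as the statement is a corollary-level consequence of Lemmas and Theorems already proved. The only points worth checking are the admissibility of the exponents, namely that $q>2$ so that Lemma \ref{holder} applies (automatic from $q\geq\frac{2Q}{Q-2s}$) and that $a\in(0,1]$ together with the interpolation relation $\frac{1}{q}=a\left(\frac{1}{2}-\frac{s}{Q}\right)+\frac{1-a}{\sigma}$ is exactly the hypothesis under which \eqref{gnin1} was established, so the two inputs are mutually consistent and may be composed without further conditions.
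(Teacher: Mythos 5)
Your proposal is correct and follows essentially the same route as the paper's own proof: apply the logarithmic H\"older inequality with $p=2$, then bound $\|u\|^{2}_{L^{q}(\H)}$ inside the logarithm by the Gagliardo--Nirenberg inequality \eqref{gnin1}, and pass to \eqref{LogGN2} via the operator $U_{s}$. The exponent checks you mention are the only hypotheses to verify, and they are exactly those of Theorem \ref{GN}.
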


\begin{proof}[Proof of Theorem \ref{thmlogGN}]
From the logarithmic H\"older inequality \eqref{holdernn} we have
\begin{equation*}
    \begin{split}
\int_{\H}\frac{|u(x)|^{2}}{\|u\|^{2}_{L^{2}(\H)}}\log\left(\frac{|u(x)|^{2}}{\|u\|^{2}_{L^{2}(\H)}}\right)dx&\leq \frac{q}{q-2}\log\left(\frac{\|u\|^{2}_{L^{q}(\H)}}{\|u\|^{2}_{L^{2}(\H)}}\right).    
    \end{split}
\end{equation*}

Hence by using \eqref{gnin1}, we have
\begin{equation*}
    \begin{split}
\int_{\H}\frac{|u(x)|^{2}}{\|u\|^{2}_{L^{2}(\H)}}&\log\left(\frac{|u(x)|^{2}}{\|u\|^{2}_{L^{2}(\H)}}\right) dx  \leq  \frac{q}{q-2}\log\left(\frac{\|u\|^{2}_{L^{q}(\H)}}{\|u\|^{2}_{L^{2}(\H)}}\right)\\&
\leq   \frac{q}{q-2}\log\left(C_{B,s}^{a}\frac{\langle \Ls u,u\rangle^{a}_{L^{2}(\H)}\|u\|^{2(1-a)}_{L^{\tau}(\H)}}{\|u\|^{2}_{L^{2}(\H)}}\right).        
    \end{split}
\end{equation*}\label{log.sob.123}
Inequality \eqref{LogGN2} then follows, and the proof of the theorem is complete.
\end{proof}
\section{Hardy-Sobolev inequalities on $\H$ and applications}
The main results in this section are the weighted log-Sobolev inequality on $\H$ and the fractional log-Hardy inequality on $\H$. The next result that was proved in \cite{RT16} is the fractional Hardy inequality on $\H$ with the best constant and  is used for the proof of the aforesaid inequalities.

\begin{thm}[Fractional Hardy inequality]\label{thm.Hardy}
    Let $s\in(0,1]$. We have  the following inequality on $\H$ 
    \begin{equation}\label{eq.Hardy}
        \int_{\H}\frac{|u(x)|^{2}}{|x|^{2s}}dx\leq C_{BH,s}\langle\Lss u,u\rangle_{L^{2}(\H)},
    \end{equation}
   where $|\cdot|$ denotes the Kaplan norm,  and the sharp constant $C_{BH,s}$ is given by  
    \begin{equation}\label{besthar}
        C_{BH,s}:=\|V_{s}\|_{\rm{op}}\frac{\Gamma(1-s)\Gamma^{2}\left(\frac{n}{2}\right)}{2^{2n+3s}\Gamma^{2}\left(\frac{n+s}{2}\right)}
    \end{equation}
    where estimate for the norm of $V_{s}$ is given in \eqref{est2}.
\end{thm}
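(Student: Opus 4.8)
The plan is to mirror the duality argument used for the Sobolev inequality in Theorem \ref{thm1}, but with the test function $g=|x|^{-2s}u$ in place of $|u|^{q}u^{-1}$, and with the positive self-adjoint operator $B:=\L\,\mathcal{L}_{1-s}^{-1}$ in place of $\Ls$. The point of this particular $B$ is twofold. First, it has the correct homogeneity: under the dilations $\delta_\lambda$ it scales exactly like $\Lss$, so the resulting inequality is dilation invariant, matching the scaling of $\int_{\H}|x|^{-2s}|u|^2\,dx$. Second, the spectral identity $B=\L\,\mathcal{L}_{1-s}^{-1}=V_s\Lss$ holds, with $V_s$ the commuting positive operator whose norm is estimated in \eqref{est2}, so that the passage from $B$ to the genuine fractional power $\Lss$ costs precisely the factor $\|V_s\|_{\text{op}}$. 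Setting $I:=\int_{\H}|x|^{-2s}|u|^2\,dx=\langle u,|x|^{-2s}u\rangle_{L^2(\H)}$ and applying the Cauchy--Schwarz inequality in the form $|\langle u,g\rangle|^2\le \langle Bu,u\rangle\,\langle B^{-1}g,g\rangle$ exactly as in the proof of Theorem \ref{thm1}, with $g=|x|^{-2s}u$, reduces the whole statement to the single estimate
\[
\langle B^{-1}(|x|^{-2s}u),\,|x|^{-2s}u\rangle_{L^2(\H)}\le C'\,I,
\qquad C':=\frac{\Gamma(1-s)\Gamma^{2}\left(\frac{n}{2}\right)}{2^{2n+3s}\Gamma^{2}\left(\frac{n+s}{2}\right)}\,,
\]
since then $I^2\le \langle Bu,u\rangle\,C'I$ yields $I\le C'\langle Bu,u\rangle=C'\langle V_s\Lss u,u\rangle\le C'\|V_s\|_{\text{op}}\langle\Lss u,u\rangle$, and $C_{BH,s}=\|V_s\|_{\text{op}}C'$ is exactly \eqref{besthar}.

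The heart of the argument is therefore the displayed estimate, which I would establish by first identifying the convolution kernel of $B^{-1}=\mathcal{L}_{1-s}\L^{-1}$. Here I use that both $\L^{-1}$ and the modified operator $\mathcal{L}_{1-s}$ act on powers of the Kaplan norm through explicit Gamma-function constants: the fundamental solution of $\L$ is a multiple of $|x|^{-(Q-2)}$, and $\mathcal{L}_{1-s}$ sends $|x|^{-(Q-2)}$ to a multiple of $|x|^{-(Q-2)-2(1-s)}=|x|^{-(Q-2s)}$, the relevant constants being read off from \eqref{bs}, the passage between the two normalisations of the sub-Laplacian being handled by the dilation identity \eqref{dil} (and contributing the factor $2^{-2n}$), as in Theorem \ref{thm1}. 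Consequently $B^{-1}$ is, up to an explicit multiplicative constant $c_1$, convolution against $|x|^{-(Q-2s)}$, so that the left-hand side of the displayed estimate equals
\[
c_1\int_{\H}\int_{\H}\frac{(|x|^{-2s}u(x))\,\overline{(|y|^{-2s}u(y))}}{|y^{-1}\circ x|^{\,Q-2s}}\,dx\,dy\,.
\]
After the substitution $h:=|x|^{-s}u$, which turns the right-hand side $C'I$ into $C'\|h\|_{L^2(\H)}^2$, this is exactly a Stein--Weiss (weighted Hardy--Littlewood--Sobolev) inequality on $\H$, with the two symmetric weights $|x|^{-s}$, $|y|^{-s}$ attached to the Riesz kernel $|y^{-1}\circ x|^{-(Q-2s)}$.

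The \emph{main obstacle} is thus the sharp constant in this Stein--Weiss inequality: the unweighted Hardy--Littlewood--Sobolev inequality of Theorem \ref{H.liitle} is not enough, because the weights $|x|^{-s},|y|^{-s}$ keep the estimate in $L^2$ rather than in $L^{p}$. To compute the sharp constant I would exploit the conformal balance $s+(Q-2s)+s=Q$, which makes the associated operator commute not only with the Heisenberg dilations but also with the gauge inversion; it is then diagonalised in the corresponding variables, and its norm equals the value of an explicit integral of products of powers of Kaplan norms, evaluable by the beta-type integral for the gauge. This is precisely the step that produces the factor $\Gamma(1-s)\Gamma^{2}\left(\frac{n}{2}\right)/\Gamma^{2}\left(\frac{n+s}{2}\right)$. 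Assembling this value together with $c_1$ and the factor $2^{-2n}$ from \eqref{dil} yields $C'$ as above, and multiplying by $\|V_s\|_{\text{op}}$ (estimated in \eqref{est2}) completes the proof of \eqref{eq.Hardy} with the constant \eqref{besthar}. Sharpness of $C_{BH,s}$ would follow by tracking the equality cases of the Cauchy--Schwarz step and of the Stein--Weiss estimate, both realised on the ground-state power $|x|^{-(Q-2s)/2}$.
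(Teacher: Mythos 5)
You should first note that the paper does not prove this theorem at all: it is imported verbatim from \cite{RT16} (``The next result that was proved in \cite{RT16} is the fractional Hardy inequality\dots''), so your argument has to stand entirely on its own. Its architecture --- a Herbst-type duality step $|\langle u,g\rangle|^2\le\langle Bu,u\rangle\langle B^{-1}g,g\rangle$ with $g=|x|^{-2s}u$, reducing the Hardy inequality to a weighted bilinear estimate for the kernel of $B^{-1}$ --- is a sensible transplant of the Euclidean proof, and the algebra $B=\L\,\mathcal{L}_{1-s}^{-1}=V_s\Lss$ is correct. But the step on which everything hinges, namely that $B^{-1}=\mathcal{L}_{1-s}\L^{-1}$ is convolution against a constant multiple of $|x|^{-(Q-2s)}$, is not justified and is almost certainly false. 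Formula \eqref{bs} only tells you what $\mathcal{L}_{1-s}$ does to the single power $|x|^{-(Q-2(1-s))}$ (it produces a delta), not what it does to $|x|^{-(Q-2)}$. On $\mathbb{R}^n$ one would conclude by homogeneity plus rotation invariance, because rotation-invariant homogeneous functions of a fixed degree form a one-dimensional space; on $\H$ they do not --- a $U(n)$-invariant function homogeneous of degree $-\beta$ has the form $|x|^{-\beta}F\bigl(|\xi|^2/|x|^2,\tau/|x|^2\bigr)$ with an arbitrary profile $F$. The failure is already visible at the local level: for the Kaplan norm one has $\L\,|x|^{\alpha}=-\alpha(\alpha+Q-2)\,|x|^{\alpha-2}\,|\nabla_{\H}|x||^{2}$ with $|\nabla_{\H}|x||^{2}=|\xi|^{2}/|x|^{2}$ non-constant, so operators in this family send pure powers of the gauge to pure powers only at the fundamental-solution exponent. (Indeed the whole reason \cite{RT16} introduce $\Ls$ is that $\mathcal{L}^{-s}$ does \emph{not} have a pure Riesz kernel; $B^{-1}=V_s^{-1}\mathcal{L}^{-s}$ inherits this defect.) Choosing $B=\Ls$ instead would give a genuine pure-power kernel $a_s|x|^{2s-Q}$, but then your final constant would carry $\|U_s\|_{\rm op}$ rather than the $\|V_s\|_{\rm op}$ of \eqref{besthar}.

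Even granting a pure-power kernel, the estimate you correctly identify as the ``main obstacle'' --- the sharp $L^2$ Stein--Weiss inequality on $\H$ with conformally balanced weights $|x|^{-s},|y|^{-s}$ against $|y^{-1}\circ x|^{-(Q-2s)}$ --- is the entire analytic content of the theorem, and your sketch does not supply it. On $\mathbb{R}^n$ that operator is diagonalised by spherical harmonics and the Mellin transform and its norm is Herbst's ratio of Gamma functions; on $\H$ the corresponding diagonalisation requires the bigraded $U(n)$-spherical harmonics on the CR sphere and Frank--Lieb/Cowling--Branson-type computations, and a sharp weighted constant of this kind is not available as a black box, so ``evaluable by the beta-type integral for the gauge'' is a placeholder rather than an argument. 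The Gamma-factors in \eqref{besthar} in fact betray a different mechanism: $\Gamma(1-s)\Gamma^{2}(\tfrac{n}{2})/\Gamma^{2}(\tfrac{n+s}{2})$ is, up to powers of $2$, the ratio $a_1/a_{1-s}$ of the fundamental-solution constants of $\L$ and $\mathcal{L}_{1-s}$, consistent with the route in \cite{RT16}, which works directly with these two Green functions rather than with a Stein--Weiss bound. Finally, your sharpness claim via an equality case at $|x|^{-(Q-2s)/2}$ fails for the same angular reason as above, and is in tension with the fact that $C_{BH,s}$ contains the merely \emph{estimated} factor $\|V_s\|_{\rm op}$ from \eqref{est2}.
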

With the use of Theorem \ref{thm.Hardy} we prove the following generalised version of it which includes it as a special case.
\begin{thm}[Fractional Hardy-Sobolev inequality]\label{FHS}
     Let $s\in(0,1]$ and let $0\leq\beta\leq 2s$. Then  we have 
  
      \begin{equation}\label{eq.Hardy-S}
         \left(\int_{\H}\frac{|u(x)|^{2^{*}_{\beta}}}{|x|^{\beta}}dx\right)^{\frac{2}{2^{*}_{\beta}}}\leq \left(C^{\frac{\beta}{2s}}_{BH,s}\left(C_{B,s}\|U_{s}\|_{\rm{op}}\right)^{\frac{n+1}{n+1-s}\frac{2s-\beta}{2s}}\right)^{\frac{2}{2^{*}_{\beta}}} \langle\Lss u,u\rangle_{L^{2}(\H)},
     \end{equation}
where $2^{*}_{\beta}=\frac{2(Q-\beta)}{Q-2s}$ and $C_{B,s}$ and $C_{BH,s}$ are given by \eqref{bestfrac} and \eqref{besthar}, respectively.
\end{thm}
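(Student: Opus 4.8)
The plan is to interpolate between the two endpoint inequalities already at our disposal: the fractional Hardy inequality \eqref{eq.Hardy} of Theorem \ref{thm.Hardy}, which governs the case $\beta=2s$ (where $2^{*}_{\beta}=2$), and the fractional Sobolev inequality \eqref{2b} of Theorem \ref{thm1}, which governs the case $\beta=0$ (where $2^{*}_{\beta}=\frac{2Q}{Q-2s}$). The exponent $\frac{\beta}{2s}$ appearing in the target constant strongly suggests that the correct interpolation parameter is $\theta:=\frac{\beta}{2s}\in[0,1]$, with $1-\theta=\frac{2s-\beta}{2s}$.

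First I would record the pointwise algebraic identity
\[
\frac{|u(x)|^{2^{*}_{\beta}}}{|x|^{\beta}}=\left(\frac{|u(x)|^{2}}{|x|^{2s}}\right)^{\theta}\left(|u(x)|^{\frac{2Q}{Q-2s}}\right)^{1-\theta},\qquad \theta=\frac{\beta}{2s},
\]
which one verifies by matching the power of $|x|$ (this forces $2s\theta=\beta$) and the power of $|u|$, a short computation showing that $2\theta+\frac{2Q}{Q-2s}(1-\theta)=\frac{2(Q-\beta)}{Q-2s}=2^{*}_{\beta}$. Then for $0<\beta<2s$ I would apply H\"older's inequality with the conjugate exponents $\frac{1}{\theta}$ and $\frac{1}{1-\theta}$ to obtain
\[
\int_{\H}\frac{|u|^{2^{*}_{\beta}}}{|x|^{\beta}}\,dx\leq\left(\int_{\H}\frac{|u|^{2}}{|x|^{2s}}\,dx\right)^{\theta}\left(\int_{\H}|u|^{\frac{2Q}{Q-2s}}\,dx\right)^{1-\theta}.
\]

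To the first factor I would apply the Hardy inequality \eqref{eq.Hardy}, and to the second the Sobolev inequality \eqref{2b} rewritten as $\int_{\H}|u|^{\frac{2Q}{Q-2s}}\,dx\leq\left(C_{B,s}\|U_{s}\|_{\rm op}\langle\Lss u,u\rangle_{L^{2}(\H)}\right)^{\frac{Q}{Q-2s}}$. Both bounds produce a power of the single quantity $\langle\Lss u,u\rangle_{L^{2}(\H)}$, and the crucial bookkeeping step is to check that these powers add up correctly: using the elementary identity $\beta+\frac{Q(2s-\beta)}{Q-2s}=\frac{2s(Q-\beta)}{Q-2s}$ one finds $\theta+\frac{Q}{Q-2s}(1-\theta)=\frac{Q-\beta}{Q-2s}=\frac{2^{*}_{\beta}}{2}$, so that after raising the resulting inequality to the power $\frac{2}{2^{*}_{\beta}}$ the right-hand side becomes exactly linear in $\langle\Lss u,u\rangle_{L^{2}(\H)}$, as required.

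Finally I would reconcile the constants. The exponent of $C_{BH,s}$ is $\theta=\frac{\beta}{2s}$, and the exponent of $C_{B,s}\|U_{s}\|_{\rm op}$ is $\frac{Q}{Q-2s}(1-\theta)$; since $Q=2n+2$ gives $\frac{Q}{Q-2s}=\frac{n+1}{n+1-s}$, this equals $\frac{n+1}{n+1-s}\cdot\frac{2s-\beta}{2s}$, which matches \eqref{eq.Hardy-S} precisely once the overall power $\frac{2}{2^{*}_{\beta}}$ is applied. The endpoints $\beta=0$ and $\beta=2s$ fall outside the H\"older step (where $\theta$ degenerates to $0$ or $1$), but there the claimed inequality reduces immediately to \eqref{2b} and \eqref{eq.Hardy} respectively, so they need no separate argument. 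I do not expect a genuine obstacle here; the only place demanding real care is the exponent arithmetic, in particular confirming that the two occurrences of $\langle\Lss u,u\rangle_{L^{2}(\H)}$ combine to the single half-power $\frac{2^{*}_{\beta}}{2}$ and that the two constant exponents survive the final rescaling unchanged.
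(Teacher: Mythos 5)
Your argument is correct and is essentially identical to the paper's proof: the paper also splits the integrand as $\frac{|u|^{\beta/s}}{|x|^{\beta}}\cdot|u|^{2^{*}_{\beta}-\beta/s}$, applies H\"older with the conjugate pair $\frac{2s}{\beta}$, $\frac{2s}{2s-\beta}$, and then invokes the Hardy inequality \eqref{eq.Hardy} and the Sobolev inequality \eqref{2b} on the two factors, with the same exponent bookkeeping yielding the half-power $\frac{2^{*}_{\beta}}{2}$ of $\langle\Lss u,u\rangle_{L^{2}(\H)}$. Your treatment of the endpoints $\beta=0$ and $\beta=2s$ also matches the paper's.
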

Indeed observe that the fractional Hardy-Sobolev inequality \eqref{eq.Hardy-S} gives the fractional Hardy inequality \eqref{eq.Hardy} when $\beta=2s$, and the fractional Sobolev \eqref{1b} inequality when $\beta=0$.

\begin{proof}[Proof of Theorem \ref{FHS}]
For $\beta=0$ and $\beta=2s$, we already have the fractional Sobolev and Hardy inequalities in \eqref{1b} and \eqref{eq.Hardy}, respectively. Now for $0<\beta<2s$, using H\"{o}lder's inequality with $\frac{\beta}{2s}+\frac{2s-\beta}{2s}=1$,   we have
    \begin{equation*}
    \begin{split}
        \int_{\H}\frac{|u(x)|^{2^{*}_{\beta}}}{|x|^{\beta}}dx&=\int_{\H}\frac{|u(x)|^{\frac{\beta}{s}}|u(x)|^{2^{*}_{\beta}-\frac{\beta}{s}}}{|x|^{\beta}}dx\\&
\leq \left(\int_{\H}\frac{|u(x)|^{2}}{|x|^{2s}}dx\right)^{\frac{\beta}{2s}}\left(\int_{\H}|u(x)|^{\frac{2Q}{Q-2s}}dx\right)^{\frac{2s-\beta}{2s}}\\&
        \stackrel{\eqref{1b}, \eqref{eq.Hardy}}\leq C^{\frac{\beta}{2s}}_{BH,s}\left(C_{B,s}\|U_{s}\|_{\text{op}}\right)^{\frac{n+1}{n+1-s}\frac{2s-\beta}{2s}} \langle\Lss u,u\rangle^{\frac{2^{*}_{\beta}}{2}}_{L^{2}(\H)},
    \end{split}
    \end{equation*}
    completing the proof.
\end{proof}

Next, we prove a weighted version of the log-Sobolev inequality in $\H$ that involves fractional powers of the sub-Laplacian $\Lss$. Following similar arguments, the analogous result can be shown for the log-Sobolev inequality involving the modified sub-Laplacian $\Ls$. 
 
\begin{thm}\label{wwsobingrthm}

Let $s\in(0,1]$ and let $0\leq \beta < 2s$. Then we have the following weighted log-Sobolev inequality of fractional order on $\H$
   
      \begin{equation}\label{LogwSobolev2}
  \begin{split}
       &\int_{\H}\frac{|x|^{-\frac{2\beta}{2^{*}_{\beta}}}|u(x)|^{2}}{\||\cdot|^{-\frac{\beta}{2^{*}_{\beta}}}u\|^{2}_{L^{2}(\H)}}\log\left(\frac{|x|^{-\frac{2\beta}{2^{*}_{\beta}}}|u(x)|^{2}}{\||\cdot|^{-\frac{\beta}{2^{*}_{\beta}}}u\|^{2}_{L^{2}(\H)}}\right) dx\\&
       \leq \frac{Q-\beta}{2s-\beta}\log\left(\left(C^{\frac{\beta}{2s}}_{BH,s}(C_{B,s}\|U_{s}\|_{\rm{op}})^{\frac{n+1}{n+1-s}\frac{2s-\beta}{2s}}\right)^{\frac{2}{2^{*}_{\beta}}}\frac{\langle \Lss u,u\rangle_{L^{2}(\H)}}{\||\cdot|^{-\frac{\beta}{2^{*}_{\beta}}}u\|^{2}_{L^{2}(\H)}}\right),
   \end{split}
\end{equation}
  where $2^{*}_{\beta}=\frac{2(Q-\beta)}{Q-2s}$.

\end{thm}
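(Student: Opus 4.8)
The plan is to mirror the proof of the (unweighted) logarithmic Sobolev inequality of Theorem \ref{thm:Logsob}, replacing the plain Sobolev inequality by the Hardy--Sobolev inequality \eqref{eq.Hardy-S} and feeding the \emph{weighted} function into the logarithmic H\"older inequality of Lemma \ref{holder} rather than $u$ itself. Concretely, I would introduce
\[
v(x):=|x|^{-\frac{\beta}{2^{*}_{\beta}}}u(x)\,,
\]
so that $|v(x)|^{2}=|x|^{-\frac{2\beta}{2^{*}_{\beta}}}|u(x)|^{2}$ and $\|v\|^{2}_{L^{2}(\H)}=\||\cdot|^{-\frac{\beta}{2^{*}_{\beta}}}u\|^{2}_{L^{2}(\H)}$. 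With this substitution the entire left-hand side of \eqref{LogwSobolev2} is literally the left-hand side of the logarithmic H\"older inequality \eqref{holdernn} applied to $v$ with $p=2$.

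Next I would apply Lemma \ref{holder} to $v$ with $p=2$ and the critical exponent $q=2^{*}_{\beta}=\frac{2(Q-\beta)}{Q-2s}$, which is admissible since $0\leq\beta<2s$ forces $q>2$ while $Q>2s$ forces $q<\infty$, so the whole range $0\leq\beta<2s$ is handled uniformly (for $\beta=0$ the statement degenerates to \eqref{LogSobolev2}). This yields
\[
\int_{\H}\frac{|v|^{2}}{\|v\|^{2}_{L^{2}(\H)}}\log\left(\frac{|v|^{2}}{\|v\|^{2}_{L^{2}(\H)}}\right)dx\leq\frac{2^{*}_{\beta}}{2^{*}_{\beta}-2}\log\left(\frac{\|v\|^{2}_{L^{2^{*}_{\beta}}(\H)}}{\|v\|^{2}_{L^{2}(\H)}}\right)\,.
\]
A one-line computation gives $2^{*}_{\beta}-2=\frac{2(2s-\beta)}{Q-2s}$, hence $\frac{2^{*}_{\beta}}{2^{*}_{\beta}-2}=\frac{Q-\beta}{2s-\beta}$, which is exactly the prefactor appearing in \eqref{LogwSobolev2}.

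The final step is to identify $\|v\|_{L^{2^{*}_{\beta}}(\H)}$ with the Hardy--Sobolev quantity. Since
\[
\|v\|^{2^{*}_{\beta}}_{L^{2^{*}_{\beta}}(\H)}=\int_{\H}|x|^{-\beta}|u(x)|^{2^{*}_{\beta}}\,dx=\int_{\H}\frac{|u(x)|^{2^{*}_{\beta}}}{|x|^{\beta}}\,dx\,,
\]
the fractional Hardy--Sobolev inequality \eqref{eq.Hardy-S} of Theorem \ref{FHS} gives directly
\[
\|v\|^{2}_{L^{2^{*}_{\beta}}(\H)}=\left(\int_{\H}\frac{|u|^{2^{*}_{\beta}}}{|x|^{\beta}}\,dx\right)^{\frac{2}{2^{*}_{\beta}}}\leq\left(C^{\frac{\beta}{2s}}_{BH,s}(C_{B,s}\|U_{s}\|_{\rm{op}})^{\frac{n+1}{n+1-s}\frac{2s-\beta}{2s}}\right)^{\frac{2}{2^{*}_{\beta}}}\langle\Lss u,u\rangle_{L^{2}(\H)}\,.
\]
Inserting this bound into the logarithm and recalling $\|v\|^{2}_{L^{2}(\H)}=\||\cdot|^{-\frac{\beta}{2^{*}_{\beta}}}u\|^{2}_{L^{2}(\H)}$ reproduces \eqref{LogwSobolev2} verbatim.

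There is no genuine analytic obstacle beyond the two inputs already established in Theorem \ref{FHS} and Lemma \ref{holder}; the argument is essentially bookkeeping. The only point requiring care is verifying that the \emph{single} choice $q=2^{*}_{\beta}$ simultaneously produces the correct weight exponent inside $\|v\|_{L^{2^{*}_{\beta}}}$ (so that the $L^{2^{*}_{\beta}}$-norm collapses onto the Hardy--Sobolev integral $\int_{\H}|u|^{2^{*}_{\beta}}|x|^{-\beta}\,dx$) and the correct constant $\frac{Q-\beta}{2s-\beta}$, which is precisely where the two powers $\frac{\beta}{2^{*}_{\beta}}$ and $\frac{2\beta}{2^{*}_{\beta}}$ appearing in the statement conspire. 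Once this alignment is confirmed, the proof closes immediately.
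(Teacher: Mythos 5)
Your proposal is correct and is essentially identical to the paper's own proof: both apply the logarithmic H\"older inequality (Lemma \ref{holder}) with $p=2$, $q=2^{*}_{\beta}$ to the weighted function $|\cdot|^{-\beta/2^{*}_{\beta}}u$, compute $\frac{q}{q-2}=\frac{Q-\beta}{2s-\beta}$, and then bound the resulting $L^{2^{*}_{\beta}}$-norm by the fractional Hardy--Sobolev inequality of Theorem \ref{FHS}. Introducing the auxiliary function $v$ is only a notational convenience, not a different argument.
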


\begin{proof}
Let $q=\frac{2(Q-\beta)}{Q-2s}$. Using the fact that $0\leq \beta <2s$, we have 
\begin{equation*}
    q-2=\frac{2(Q-\beta)}{Q-2s}-2=\frac{2(Q-\beta-Q+2s)}{Q-2s}=\frac{2(2s-\beta)}{Q-2s}> 0.
\end{equation*}
The latter implies that $q> 2=p$, and  we compute
 \begin{equation*}\label{vychetstploghsgr}
     0<\frac{q}{q-2}=\frac{\frac{2(Q-\beta)}{Q-2s}}{\frac{2(Q-\beta)}{Q-2s}-2}
     =\frac{\frac{Q-\beta}{Q-2s}}{\frac{(Q-\beta)}{Q-2s}-1}
     =\frac{Q-\beta}{2s-\beta}.
 \end{equation*}
 By using the logarithmic H\"{o}lder's inequality as in Lemma \ref{holder}, and Theorem \ref{FHS}, we have
 \begin{equation*}
     \begin{split}
        \int_{\H}&\frac{|x|^{-\frac{2\beta}{2^{*}_{\beta}}}|u(x)|^{2}}{\||\cdot|^{-\frac{\beta}{2^{*}_{\beta}}}u\|^{2}_{L^{2}(\H)}}\log\left(\frac{|x|^{-\frac{2\beta}{2^{*}_{\beta}}}|u(x)|^{2}}{\||\cdot|^{-\frac{\beta}{2^{*}_{\beta}}}u\|^{2}_{L^{2}(\H)}}\right) dx\leq \frac{q}{q-2}\log\left(\frac{\||\cdot|^{-\frac{\beta}{2^{*}_{\beta}}} u\|^{2}_{L^{2^{*}_{\beta}}(\H)}}{\||\cdot|^{-\frac{\beta}{2^{*}_{\beta}}} u\|^{2}_{L^{2}(\H)}}\right)\\&
       \leq \frac{Q-\beta}{2s-\beta}\log\left(\left(C^{\frac{\beta}{2s}}_{BH,s}(C_{B,s}\|U_{s}\|_{\text{op}})^{\frac{n+1}{n+1-s}\frac{2s-\beta}{2s}}\right)^{\frac{2}{2^{*}_{\beta}}}\frac{\langle \Lss u,u\rangle_{L^{2}(\H)}}{\||\cdot|^{-\frac{\beta}{2^{*}_{\beta}}}u\|^{2}_{L^{2}(\H)}}\right),
     \end{split}
 \end{equation*}
completing the proof.
\end{proof}
\begin{rem}
We note that inequality \eqref{LogwSobolev2} for $\beta=0$ gives the logarithmic Sobolev inequality \eqref{LogSobolev2}. However, the logarithmic Hardy inequality cannot be deduced from \eqref{LogwSobolev2} since then we would need to consider the forbidden, by the logarithmic H\"older inequality \eqref{holdernn}, case where  $p=q$. In the next theorem, we develop a different line of arguments to prove the logarithmic Hardy inequality on $\H$. In the Euclidean setting, the logarithmic first order ($s=1$) Hardy inequality was obtained in \cite{DDFT10}.
\end{rem}

\begin{thm}\label{THM:weHarnon}
Let $s\in(0,1]$, be such that $0\leq \beta<2s<Q.$ Then, the log-Hardy inequality on $\H$ reads as follows

     \begin{equation*}
      \begin{split}
          \int_{\H}&\frac{\frac{|u(x)|^{2}}{|x|^{2s-\beta}}}{\left\|\frac{u}{|\cdot|^{\frac{2s-\beta}{2}}}\right\|^{2}_{L^{2}(\H)}}\log\left(\frac{|x|^{(Q-2s)(1-\frac{\beta}{2s-\beta})}|u(x)|^{2}}{\left\|\frac{u}{|\cdot|^{\frac{2s-\beta}{2}}}\right\|^{2}_{L^{2}(\H)}}\right) dx\\&
          \leq \frac{Q-\beta}{2s-\beta}\int_{\H}\frac{|u|^{2}}{|x|^{2s-\beta}}dx\log\left(C_{Lw,s}\frac{\langle\Lss u,u\rangle_{L^{2}(\H)}}{\left\|\frac{u}{|\cdot|^{\frac{2s-\beta}{2}}}\right\|^{2}_{L^{2}(\H)}}\right),
            \end{split}
     \end{equation*}
    where we have set 
    \[
    C_{Lw,s}=\left(C^{\frac{\beta}{2s}}_{BH,s}\left(C_{B,s}\|U_{s}\|_{\text{op}}\right)^{\frac{n+1}{n+1-s}\frac{2s-\beta}{2s}}\right)^{\frac{2}{2^{*}_{\beta}}},\,\,\,2^{*}_{\beta}=\frac{2(Q-\beta)}{Q-2s}.
    \]
   In particular, for all $u$ such that $\int_{\H}\frac{|u|^{2}}{|x|^{2s-\beta}}dx=1$, we have 
   \begin{equation}\label{weloghar1non}
         \int_{\H}\frac{|u(x)|^{2}}{|x|^{2s-\beta}}\log\left(|x|^{(Q-2s)(1-\frac{\beta}{2s-\beta})}|u(x)|^{2}\right) dx \leq \frac{Q-\beta}{2s-\beta}\log\left(C_{Lw,s}\langle\Lss u,u\rangle_{L^{2}(\H)}\right).
     \end{equation}
    
\end{thm}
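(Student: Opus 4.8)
The plan is to sidestep the obstruction noted in the preceding remark. A direct application of the logarithmic Hölder inequality (Lemma~\ref{holder}) to $u$, or an attempt to read the statement off from the weighted log-Sobolev inequality \eqref{LogwSobolev2}, fails precisely because the weight $|x|^{-(2s-\beta)}$ multiplying $|u|^2$ in the entropy prefactor differs from the weight $|x|^{(Q-2s)(1-\beta/(2s-\beta))}$ sitting inside the logarithm; inequality \eqref{holdernn} forces these two weights to coincide, which would in turn force $p=q$. The key idea is therefore to apply \eqref{holdernn} not in the Haar measure but in a suitably weighted measure, and simultaneously to a reweighted function, so that the measure weight and the function weight independently tune the prefactor weight and the in-log weight to the two required, distinct, powers of $|x|$.

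Concretely, I would set $c=\frac{(s-\beta)(Q-2s)}{2s-\beta}$, consider the measure $d\nu=|x|^{-d}\,dx$ with $d=2c+(2s-\beta)$ on $\H$, and apply Lemma~\ref{holder} to $\phi:=|x|^{c}u$ on the measure space $(\H,\nu)$ with exponents $p=2$ and $q=2^{*}_{\beta}=\frac{2(Q-\beta)}{Q-2s}$. Since $\beta<2s$ we have $q>2=p$, so \eqref{holdernn} applies and no forbidden endpoint occurs. Rewriting the resulting integrals in the Haar measure, the density $\tfrac{|\phi|^2}{\|\phi\|^2_{L^2(\nu)}}=\tfrac{|x|^{2c}|u|^2}{\|\phi\|^2_{L^2(\nu)}}$ reproduces the exponent $2c=(Q-2s)(1-\tfrac{\beta}{2s-\beta})$ inside the logarithm, while multiplying the density by the weight $|x|^{-d}$ produces the prefactor $|x|^{2c-d}|u|^2=|x|^{-(2s-\beta)}|u|^2$; moreover $\|\phi\|^2_{L^2(\nu)}=\int_{\H}|x|^{-(2s-\beta)}|u|^2\,dx$ is exactly the normalising quantity $\big\|\,u/|\cdot|^{(2s-\beta)/2}\big\|^2_{L^2(\H)}$. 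On the right-hand side, $\|\phi\|_{L^{q}(\nu)}^{q}=\int_{\H}|x|^{cq-d}|u|^{q}\,dx=\int_{\H}|x|^{-\beta}|u|^{2^{*}_{\beta}}\,dx$, so the $L^{q}(\nu)$-term is precisely the left-hand side of the fractional Hardy--Sobolev inequality \eqref{eq.Hardy-S}.

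With these identifications, bounding $\|\phi\|^2_{L^q(\nu)}=\big(\int_{\H}|x|^{-\beta}|u|^{2^{*}_{\beta}}\,dx\big)^{2/2^{*}_{\beta}}$ by $C_{Lw,s}\langle\Lss u,u\rangle_{L^2(\H)}$ via Theorem~\ref{FHS}, and using $\tfrac{q}{q-2}=\tfrac{2^{*}_{\beta}}{2^{*}_{\beta}-2}=\tfrac{Q-\beta}{2s-\beta}$, turns Lemma~\ref{holder} into the claimed inequality; specialising to $\int_{\H}|u|^2/|x|^{2s-\beta}\,dx=1$ yields the particular form \eqref{weloghar1non}. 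I expect the main point to verify—rather than any hard estimate—to be the internal consistency of the weight bookkeeping: the two unknowns $c,d$ must meet the three constraints $2c-d=-(2s-\beta)$, $cq-d=-\beta$ and $2c=(Q-2s)(1-\tfrac{\beta}{2s-\beta})$, coming respectively from the prefactor, from the Hardy--Sobolev term, and from the in-log weight. This system is overdetermined, and it is exactly the compatibility identity $c\,(2^{*}_{\beta}-2)=2(s-\beta)$—equivalently, the stated value of the logarithmic exponent $(Q-2s)(1-\tfrac{\beta}{2s-\beta})$—that renders the three requirements consistent. Checking this identity, together with the density reduction to $u\in C^{\infty}_{0}(\H\setminus\{0\})$ needed to guarantee finiteness of all the weighted integrals, is where the care is required.
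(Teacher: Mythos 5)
Your proposal is correct, and it reaches the conclusion by a genuinely different and considerably shorter route than the paper. The paper deliberately avoids Lemma \ref{holder} (because of the $p=q$ obstruction it flags in the preceding remark) and instead re-derives a weighted logarithmic H\"older inequality from scratch: it writes down a one-parameter family of H\"older/interpolation inequalities indexed by $q=2+r$, subtracts the $r=0$ endpoint identity, divides by $r$, and computes the limit $r\to 0$ by l'H\^opital's rule (differentiating the auxiliary function $z(r)=f(r)^{g(r)}$, etc.), before invoking Theorem \ref{FHS} at the very end. You instead observe that Lemma \ref{holder} is stated for an arbitrary measure space, so the obstruction disappears if one decouples the two weights by applying it to $\phi=|x|^{c}u$ on $(\H, |x|^{-d}dx)$ with $p=2<q=2^{*}_{\beta}$; your bookkeeping is right (I checked that $2c-d=-(2s-\beta)$, $cq-d=-\beta$ and $2c=(Q-2s)\bigl(1-\tfrac{\beta}{2s-\beta}\bigr)$ are simultaneously satisfied precisely because $c\,(2^{*}_{\beta}-2)=2(s-\beta)$), and the $L^{q}(\nu)$ term is exactly the Hardy--Sobolev quantity, so Theorem \ref{FHS} finishes the job. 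What your approach buys is economy and transparency: the entire l'H\^opital computation is absorbed into the already-proven Lemma \ref{holder}. What the paper's longer computation buys is the intermediate identity \eqref{predposnon}, which separates out the term $\int_{\H}|u|^{2}|x|^{-(2s-\beta)}\log\bigl(|x|^{\beta(Q-2s)/(2s-\beta)}I_{1}\bigr)dx$ explicitly; that is of some independent interest but is not needed for the stated theorem.

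One point to be aware of: your argument yields the inequality \emph{without} the factor $\int_{\H}|u|^{2}|x|^{-(2s-\beta)}\,dx$ multiplying the logarithm on the right-hand side, i.e. the normalized left-hand side is bounded by $\frac{Q-\beta}{2s-\beta}\log(\cdots)$ alone. This does not literally match the displayed statement, which pairs a normalized left-hand side with an un-normalized right-hand side. However, the final display of the paper's own proof (obtained by dividing the penultimate inequality by $I_{1}$ on the left only) exhibits the same mismatch, and both consistent readings of the statement (normalize both sides or neither) coincide with what you prove; in particular your version gives \eqref{weloghar1non} exactly when $\int_{\H}|u|^{2}|x|^{-(2s-\beta)}dx=1$. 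So the discrepancy is a normalization typo in the statement rather than a gap in your argument.
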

\begin{proof}
 Let us choose $q\in(2,2_{\beta}^{*})$, where $2^{*}_{\beta}=\frac{2(Q-\beta)}{Q-2s}$. If we set $\theta=\frac{2(\p-q)}{\p-2}$, then we have $\theta\in(0,2)$. Let us make some preparatory computations:
\begin{multline}\label{wevyrtheta}
   \theta=\frac{2(\p-q)}{\p-2}=\frac{2\left(\frac{2(Q-\beta)}{Q-2s}-q\right)}{\frac{2(Q-\beta)}{Q-2s}-2} \\ =\frac{\frac{2}{Q-2s}\left(2(Q-\beta)-q(Q-2s)\right)}{\frac{1}{Q-2s}(2(Q-\beta)-2Q+4s)}=\frac{2(Q-\beta)}{2s-\beta}-\frac{(Q-2s)q}{2s-\beta},
\end{multline}
\begin{equation}\label{wep(qth)}
    2\frac{q-\theta}{2-\theta}=2\frac{q-2\frac{\p-q}{\p-2}}{2-2\frac{\p-q}{\p-2}}=\frac{q(\p-2)-2(\p-q)}{\p-2-\p+q}=\frac{\p(q-2)}{q-2}=\p,
\end{equation}
\begin{equation}\label{weptp}
    \frac{2-\theta}{2}=1-\frac{\theta}{2}=\frac{q-2}{\p-2},
\end{equation}
and 
\begin{equation}\label{hbeta}
    -\frac{\beta \theta}{2}=\beta\frac{2-\theta}{2}-\beta.
\end{equation}
By using H\"{o}lder's inequality \eqref{holder.gen} with $\frac{\theta}{2}+\frac{2-\theta}{2}=1$ and the above calculations we get,
\begin{equation*}
    \begin{split}
        \int_{\H}\frac{|u(x)|^{q}}{|x|^{Q-\beta-\frac{q}{2}(Q-2s)}}dx &=\int_{\H}\frac{|u(x)|^{\theta}}{|x|^{s\theta }}\frac{|u(x)|^{q-\theta}}{|x|^{Q-\beta-\frac{q}{2}(Q-2s)-\theta s}}dx\\&
        \stackrel{\eqref{wevyrtheta}}=\int_{\H}\frac{|u(x)|^{\theta}}{|x|^{\theta s}}\frac{|u(x)|^{q-\theta}}{|x|^{-\frac{\theta \beta}{2}}}dx\\&
        \stackrel{\eqref{hbeta}}= \int_{\H}\frac{|u(x)|^{\theta}}{|x|^{\theta s}}\frac{|u(x)|^{q-\theta}}{|x|^{\beta\frac{2-\theta}{2}-\beta}}dx\\&
        =\int_{\H}\frac{|u(x)|^{\theta}}{|x|^{\theta s-\beta}}\frac{|u(x)|^{q-\theta}}{|x|^{\beta\frac{2-\theta}{2}}}dx\\&
        \leq \left(\int_{\H}\frac{|u(x)|^{2}}{|x|^{2s-\frac{2\beta }{\theta}}}dx\right)^{\frac{\theta}{2}}\left(\int_{\H}\frac{|u(x)|^{\frac{2(q-\theta)}{2-\theta}}}{|x|^{\beta}}dx\right)^{\frac{2-\theta}{2}}\\&
\stackrel{\eqref{wep(qth)},\eqref{weptp}}=\left(\int_{\H}\frac{|u(x)|^{2}}{|x|^{2s-\beta\frac{\p-2}{\p-q}}}dx\right)^{\frac{\p-q}{\p-2}}\left(\int_{\H}\frac{|u(x)|^{\p}}{|x|^{\beta}}dx\right)^{\frac{q-2}{\p-2}},
    \end{split}
\end{equation*}
also, in the last line we used $\theta=\frac{2(\p-q)}{\p-2}.$
If for $r>0$ we set $q=2+r$ then the above inequalities can be summarised as follows:
\begin{multline}\label{wesrnon}
        \int_{\H}\frac{|u(x)|^{r+2}}{|x|^{Q-\beta-\frac{(r+2)}{2}(Q-2s)}}dx
        \\ \leq \left(\int_{\H}\frac{|u|^{2}}{|x|^{2s-\beta\frac{\p-2}{\p-(r+2)}}}dx\right)^{\frac{\p-(r+2)}{\p-2}}\left(\int_{\H}\frac{|u|^{\p}}{|x|^{\beta}}dx\right)^{\frac{(r+2)-2}{\p-2}}.
\end{multline}
On the other hand, when  $r \rightarrow 0$, inequality \eqref{wesrnon} is the equality
\begin{equation}\label{webrnon}
    \begin{split}
        \int_{\H}\frac{|u|^{2}}{|x|^{Q-\beta-\frac{2}{2}(Q-2s)}}dx&
        =\left(\int_{\H}\frac{|u|^{2}}{|x|^{2s-\beta\frac{\p-2}{\p-2}}}dx\right)^{\frac{\p-2}{\p-2}}\left(\int_{\H}\frac{|u|^{\p}}{|x|^{\beta}}dx\right)^{\frac{2-2}{\p-2}}.
    \end{split}
\end{equation}
Hence using \eqref{wesrnon} and \eqref{webrnon}
we get
\begin{equation}\label{welogosn2non}
\begin{split}
        &\lim_{r\rightarrow 0}\frac{1}{r}\int_{\H}\left(\frac{|u|^{r+2}}{|x|^{Q-\beta-\frac{(r+2)}{2}(Q-2s)}}-\frac{|u|^{2}}{|x|^{Q-\beta-\frac{2}{2}(Q-2s)}}\right)dx\\&
        \leq \lim_{r\rightarrow 0}\frac{1}{r}\Biggl{[}\left(\int_{\H}\frac{|u|^{2}}{|x|^{2s-\beta\frac{\p-2}{\p-(r+2)}}}dx\right)^{\frac{\p-(r+2)}{\p-2}}\left(\int_{\H}\frac{|u|^{\p}}{|x|^{\beta}}dx\right)^{\frac{(r+2)-2}{\p-2}}\\&
        -\left(\int_{\H}\frac{|u|^{2}}{|x|^{2s-\beta\frac{\p-2}{\p-2}}}dx\right)^{\frac{\p-2}{\p-2}}\left(\int_{\H}\frac{|u|^{\p}}{|x|^{\beta}}dx\right)^{\frac{2-2}{\p-2}}\Biggl{]}.
\end{split}
\end{equation}
Let us now compute the left-hand side of the inequality \eqref{welogosn2non} with the use of the l'H\^{o}pital rule in the variable $r$:
\begin{equation}\label{otkrlevchnon}
    \begin{split}
        \lim_{r\rightarrow 0}\frac{1}{r}\left(\frac{|u|^{r+2}}{|x|^{Q-\beta-\frac{(r+2)}{2}(Q-2s)}}-\frac{|u|^{2}}{|x|^{Q-\beta-\frac{2}{2}(Q-2s)}}\right)&=\lim_{r\rightarrow 0}\frac{d}{dr}\frac{|u|^{r+2}}{|x|^{Q-\beta-\frac{(r+2)}{2}(Q-2s)}}\\&
        =\frac{|u|^{2}\log(|u|^{2}|x|^{Q-2s})}{2|x|^{2s-\beta}}.
    \end{split}
\end{equation}
For the right-hand side of  \eqref{welogosn2non}, we define the auxiliary function $z(r):=(f(r))^{g(r)}$, where
\begin{equation*}
    f(r)=\int_{\H}\frac{|u|^{2}}{|x|^{2s-\beta\frac{\p-2}{\p-(r+2)}}}dx,
\end{equation*}
and 
\begin{equation*}
    g(r)=\frac{\p-(r+2)}{\p-2}.
\end{equation*}
The derivatives of the functions $f,g$ with respect to the variable $r$ are as follows
\begin{equation}\label{derfrnon}
    \begin{split}
       \frac{df(r)}{dr}&=\frac{d}{dr}\int_{\H}\frac{|u|^{2}}{|x|^{2s-\beta\frac{\p-2}{\p-(r+2)}}}dx\\&
       =\frac{\beta(\p-2)}{(\p-(r+2))^{2}}\int_{\H}\frac{|u|^{2}}{|x|^{2s-\beta\frac{\p-2}{\p-(r+2)}}}\log(|x|)dx,
    \end{split}
\end{equation}
and 
\begin{equation}\label{dergrnon}
    \frac{dg(r)}{dr}=\frac{d}{dr}\frac{\p-(r+2)}{\p-2}=-\frac{1}{\p-2}.
\end{equation}
Hence the derivative of $z(r)$ is given by 
\begin{equation}\label{compzrnon}
\begin{split}
    \frac{dz(r)}{dr}&=(f(r))^{g(r)}\left(\frac{dg(r)}{dr}\log(f(r))+\frac{g(r)\frac{df(r)}{dr}}{f(r)}\right)\\&
    \stackrel{\eqref{derfrnon},\eqref{dergrnon}}=z(r)\Biggl{(}-\frac{1}{\p-2}\log\left(\int_{\H}\frac{|u|^{2}}{|x|^{2s-\beta\frac{\p-2}{\p-(r+2)}}}dx\right)\\&
    +\frac{\beta}{\p-(r+2)}\frac{\int_{\H}\frac{|u|^{2}}{|x|^{2s-\beta\frac{\p-2}{\p-(r+2)}}}\log(|x|)dx}{\int_{\H}\frac{|u|^{2}}{|x|^{2s-\beta\frac{\p-2}{\p-(r+2)}}}dx}\Biggl{)}.
\end{split}
\end{equation}
We also have
\begin{equation}\label{compharsobnon}
    \begin{split}
      \frac{d}{dr}\left(\int_{\H}\frac{|u|^{\p}}{|x|^{\beta}}dx\right)^{\frac{(r+2)-2}{\p-2}}&=  \frac{1}{\p-2}\left(\int_{\H}\frac{|u|^{\p}}{|x|^{\beta}}dx\right)^{\frac{(r+2)-2}{\p-2}}\\&
      \times\log\left(\int_{\H}\frac{|u|^{\p}}{|x|^{\beta}}dx\right).
    \end{split}
\end{equation}
Using the above derivatives and l'H\^{o}pital's rule, we have
\begin{equation*}
    \begin{split}
        &\lim_{r\rightarrow 0}\frac{1}{r}\Biggl{[}\left(\int_{\H}\frac{|u|^{2}}{|x|^{2s-\beta\frac{\p-2}{\p-(r+2)}}}dx\right)^{\frac{\p-(r+2)}{\p-2}}\left(\int_{\H}\frac{|u|^{\p}}{|x|^{\beta}}dx\right)^{\frac{(r+2)-2}{\p-2}}\\&
        -\left(\int_{\H}\frac{|u|^{2}}{|x|^{2s-\beta\frac{\p-2}{\p-2}}}dx\right)^{\frac{\p-2}{\p-2}}\left(\int_{\H}\frac{|u|^{\p}}{|x|^{\beta}}dx\right)^{\frac{p-p}{\p-p}}\Biggl{]}\\&
        =\lim_{r\rightarrow 0}\frac{d}{dr}\left[\left(\int_{\H}\frac{|u|^{2}}{|x|^{2s-\beta\frac{\p-2}{\p-(r+2)}}}dx\right)^{\frac{\p-(r+2)}{\p-2}}\left(\int_{\H}\frac{|u|^{\p}}{|x|^{\beta}}dx\right)^{\frac{(r+2)-2}{\p-2}}\right]
        \end{split}
\end{equation*}
        \begin{equation*}
    \begin{split}
        &
        =\lim_{r\rightarrow 0}\frac{d}{dr}\left(z(r)\left(\int_{\H}\frac{|u|^{\p}}{|x|^{\beta}}dx\right)^{\frac{(r+2)-2}{\p-2}}\right)\\&
        =\lim_{r\rightarrow 0}\left[\frac{dz(r)}{dr}\left(\int_{\H}\frac{|u|^{\p}}{|x|^{\beta}}dx\right)^{\frac{(r+2)-2}{\p-2}}+z(r)\frac{d}{dr}\left(\int_{\H}\frac{|u|^{\p}}{|x|^{\beta}}dx\right)^{\frac{(r+2)-2}{\p-2}}\right]\\&
    \stackrel{\eqref{compzrnon},\eqref{compharsobnon}}=\frac{\int_{\H}\frac{|u|^{2}}{|x|^{2s-\beta}}dx}{\p-2}\Biggl{[}-\log\left(\int_{\H}\frac{|u|^{2}}{|x|^{2s-\beta}}dx\right)\\&
    +\beta\frac{\int_{\H}\frac{|u|^{2}}{|x|^{2s-\beta}}\log(|x|)dx}{\int_{\H}\frac{|u|^{2}}{|x|^{2s-\beta}}dx}+\log\left(\int_{\H}\frac{|u|^{\p}}{|x|^{\beta}}dx\right)\Biggl{]}\\&
    =\frac{\int_{\H}\frac{|u|^{2}}{|x|^{2s-\beta}}dx}{\p-2}\Biggl{[}\beta\frac{\int_{\H}\frac{|u|^{2}}{|x|^{2s-\beta}}\log(|x|)dx}{\int_{\H}\frac{|u|^{2}}{|x|^{2s-\beta}}dx}+\log\frac{\left(\int_{\H}\frac{|u|^{\p}}{|x|^{\beta}}dx\right)}{\left(\int_{\H}\frac{|u|^{2}}{|x|^{2s-\beta}}dx\right)}\Biggl{]}\\&
    =\frac{I_{1}}{\p-2}\left(\beta\frac{I_{3}}{I_{1}}+\log\frac{I_{2}}{I_{1}}\right),
\end{split}
\end{equation*}
where $I_1$, $I_2$ and $I_3$ stand for the next integrals
\begin{equation*}
    I_{1}=\int_{\H}\frac{|u|^{2}}{|x|^{2s-\beta}}dx,
\end{equation*}
\begin{equation*}
    I_{2}=\int_{\H}\frac{|u|^{\p}}{|x|^{\beta}}dx,
\end{equation*}
and 
\begin{equation*}
    I_{3}=\int_{\H}\frac{|u|^{2}}{|x|^{2s-\beta}}\log(|x|)dx.
\end{equation*}
Therefore, we also have
\begin{equation*}
    \begin{split}
        &\lim_{r\rightarrow 0}\frac{1}{r}\Biggl{[}\left(\int_{\H}\frac{|u|^{2}}{|x|^{2s-\beta\frac{\p-2}{\p-(r+2)}}}dx\right)^{\frac{\p-(r+2)}{\p-2}}\left(\int_{\H}\frac{|u|^{\p}}{|x|^{\beta}}dx\right)^{\frac{(r+2)-2}{\p-2}}\\&
        -\left(\int_{\H}\frac{|u|^{2}}{|x|^{2s-\beta\frac{\p-2}{\p-2}}}dx\right)^{\frac{\p-2}{\p-2}}\left(\int_{\H}\frac{|u|^{\p}}{|x|^{\beta}}dx\right)^{\frac{2-2}{\p-2}}\Biggl{]}
    \end{split}
\end{equation*}
        \begin{equation}\label{otkrpravchnon}
    \begin{split}
        &=\frac{I_{1}}{\p-2}\left(\beta\frac{I_{3}}{I_{1}}+\log\frac{I_{2}}{I_{1}}\right)\\&
        =\frac{\beta}{\p-2}I_{3}+\frac{I_{1}}{\p-2}\frac{2\p }{2\p }\log\frac{I_{2}}{I_{1}}\\&
        =\frac{\beta}{\p-2}I_{3}+\frac{I_{1}\p}{2(\p-2)}\log\frac{I^{\frac{2}{\p }}_{2}}{I_{1}^{1-1+\frac{2}{\p}}}\\&
        =\frac{\beta}{\p-2}I_{3}+\frac{I_{1}\p}{2(\p-2)}\log\frac{I^{\frac{2}{\p }}_{2}}{I_{1}}-\frac{I_{1}\p }{2(\p-2)}\log I_{1}^{-1+\frac{2}{\p}}\\&
        =\frac{\beta}{\p-2}I_{3}+\frac{I_{1}\p}{2(\p-2)}\log\frac{I^{\frac{2}{\p }}_{2}}{I_{1}}+\frac{I_{1}}{2}\log I_{1}.
    \end{split}
\end{equation}

Using the expression \eqref{otkrlevchnon} for the left-hand side of  \eqref{welogosn2non} and the expression \eqref{otkrpravchnon} for the right-hand side of it we get
\begin{equation}\label{predposnon}
\begin{split}
     \int_{\H}\frac{|u|^{2}\log(|u|^{2}|x|^{Q-2s})}{|x|^{2s-\beta}}&dx\leq \frac{2\beta }{\p-2}I_{3}+\frac{I_{1}\p}{\p-2}\log\frac{I^{\frac{2}{\p}}_{2}}{I_{1}}+I_{1}\log I_{1}\\&
     =\int_{\H}\frac{|u|^{2}}{|x|^{2s-\beta}}\log\left(|x|^{\frac{\beta(Q-2s)}{2s-\beta}}\int_{\H}\frac{|u|^{2}}{|y|^{2s-\beta}}dy\right)dx\\&
     +\frac{(Q-\beta)\int_{\H}\frac{|u|^{2}}{|x|^{2s-\beta}}dx}{2s-\beta}\log \left(\frac{\left(\int_{\H}\frac{|u|^{\p}}{|x|^{\beta}}dx\right)^{\frac{2}{\p}}}{\int_{\H}\frac{|u|^{2}}{|x|^{2s-\beta}}dx}\right)\,,
\end{split}
\end{equation}
since $\frac{\p}{2(\p-2)}=\frac{Q-\beta}{2s-\beta}$, and so 
\[
\frac{I_{1}\p}{2(\p-2)}\log\frac{I^{\frac{2}{\p }}_{2}}{I_{1}}=\frac{(Q-\beta)}{2s-\beta}\int_{\H}\frac{|u|^{2}}{|x|^{2s-\beta}}dx \log \left(\frac{\left(\int_{\H}\frac{|u|^{\p}}{|x|^{\beta}}dx\right)^{\frac{2}{\p}}}{\int_{\H}\frac{|u|^{2}}{|x|^{2s-\beta}}dx}\right)\,,
\]
while also, since $\frac{\beta }{\p-2}= \frac{\beta(Q-2s)}{2s-\beta}$, it is easy to check that 
\[
\frac{2\beta }{\p-2}I_{3}+I_{1}\log I_{1}=\int_{\H}\frac{|u|^{2}}{|x|^{2s-\beta}}\log\left(|x|^{\frac{\beta(Q-2s)}{2s-\beta}}\int_{\H}\frac{|u|^{2}}{|x|^{2s-\beta}}dx\right)dx\,.
\]

From Theorem \ref{FHS}, if $0\leq \beta<2s<Q$, we have the Hardy-Sobolev inequality with $\p=\frac{2(Q-\beta)}{Q-2s}$ in the following form:
\begin{equation*}
    \left\|\frac{u}{|x|^{\frac{\beta}{\p}}}\right\|_{L^{\p}(\H)}\leq C^{\frac{1}{2}}_{Lw,s}\langle\Lss u,u\rangle^{\frac{1}{2}}_{L^{2}(\H)}\,,
\end{equation*}
with $C_{Lw,s}=\left(C^{\frac{\beta}{2s}}_{BH,s}\left(C_{B,s}\|U_{s}\|_{\text{op}}\right)^{\frac{n+1}{n+1-s}\frac{2s-\beta}{2s}}\right)^{\frac{2}{2^{*}_{\beta}}}$.
Finally, by using this in \eqref{predposnon}, we arrive at 
\begin{equation*}
\begin{split}
    &\int_{\H}\frac{\frac{|u|^{2}}{|x|^{2s-\beta}}}{\int_{\H}\frac{|u|^{2}}{|x|^{2s-\beta}}dx}\log\left(\frac{|u|^{2}|x|^{(Q-2s)(1-\frac{\beta}{2s-\beta})}}{\int_{\H}\frac{|u|^{2}}{|x|^{2s-\beta}}dx}\right)dx \\ &\leq\frac{Q-\beta}{2s-\beta} \int_{\H}\frac{|u|^{2}}{|x|^{2s-\beta}}dx\log\frac{\left(\int_{\H}\frac{|u|^{\p}}{|x|^{\beta}}dx\right)^{\frac{2}{\p}}}{\int_{\H}\frac{|u|^{2}}{|x|^{2s-\beta}}dx}\\&
    \leq\frac{Q-\beta}{2s-\beta}\int_{\H}\frac{|u|^{2}}{|x|^{2s-\beta}}dx\log\left(C_{Lw,s}\frac{\langle\Lss u,u\rangle_{L^{2}(\H)}}{\int_{\H}\frac{|u|^{2}}{|x|^{2s-\beta}}dx}\right),
\end{split}
\end{equation*}
where for the first inequality we have subtracted the the first term of the right-hand side of \eqref{predposnon} from the left-hand side of \eqref{predposnon}, and the proof is complete. 
\end{proof}

\begin{rem}\label{REM:CCR-nw}
By taking $\beta=s$ in \eqref{weloghar1non}, we obtain an interesting inequality
   \begin{equation}\label{weloghar1non-nw}
         \int_{\H}\frac{|u(x)|^{2}}{|x|^{s}}\log\left(|u(x)|\right) dx \leq \frac{Q-s}{2s}\log\left(C_{Lw,s}\langle\Lss u,u\rangle_{L^{2}(\H)}\right),
     \end{equation}
    for all $u$ such that $\int_{\H}\frac{|u|^{2}}{|x|^{s}}dx=1$. Inequality \eqref{weloghar1non-nw} is  a combination of radial and logarithmic weights in the usual Hardy inequality.
\end{rem}

\section{Nash inequality on $\H$ and applications}

On the Euclidean space it is proved, see \cite{Bec98}, that the Nash inequality is equivalent to the $L^2$-log-Sobolev inequality. In this section we show that the $L^2$-log-Sobolev inequality on $\H$ implies the Nash inequality on $\H$ with an explicit constant.

\begin{thm}\label{THM:Nash}
 Let $0< s\leq 1.$ Then, Nash's inequality on $\H$ with respect to the (modified) fractional sub-Laplacian reads as follows 
\begin{equation}\label{Nash.g}
 \|u\|_{L^2(\H)}^{2+\frac{4s}{Q}} \leq C_{B,s} \|u\|_{L^1(\H)}^{\frac{4s}{Q}}\langle\Ls u,u\rangle_{L^{2}(\H)}\,.
 \end{equation}
 Alternatively, Nash's inequality on $\H$ with respect to fractional powers of the sub-Laplacian is given by 
\begin{equation}\label{Nash.g1}
 \|u\|_{L^2(\H)}^{2+\frac{4s}{Q}} \leq C_{B,s}\|U\|_{\text{op}} \|u\|_{L^1(\H)}^{\frac{4s}{Q}}\langle\Lss u,u\rangle_{L^{2}(\H)}\,,
 \end{equation}
where $C_{B,s}$ is defined in \eqref{bestfrac}.
\end{thm}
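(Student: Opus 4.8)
The plan is to deduce Nash's inequality directly from the $L^2$-logarithmic Sobolev inequality \eqref{LogSobolev1}, mirroring the classical Euclidean equivalence between the two (see \cite{Bec98}). Write
\[
E:=\int_{\H}\frac{|u|^2}{\|u\|^2_{L^2(\H)}}\log\left(\frac{|u|^2}{\|u\|^2_{L^2(\H)}}\right)dx
\]
for the entropy on the left-hand side of \eqref{LogSobolev1}, and note that $E$ depends on $u$ only through $|u|$. I may assume $u\in L^1(\H)\cap L^2(\H)$ with $\langle \Ls u,u\rangle_{L^2(\H)}<\infty$, the inequality being trivial otherwise. The strategy is to bound $E$ from \emph{below} by a quantity involving $\|u\|_{L^1(\H)}$ and $\|u\|_{L^2(\H)}$, and then invoke \eqref{LogSobolev1} to bound the same $E$ from above.

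The key step is this lower bound, which comes from Jensen's inequality. Regard $d\nu:=\|u\|_{L^2(\H)}^{-2}|u|^2\,dx$ as a probability measure on $\H$ and apply Jensen's inequality to the concave function $\log$ and the variable $1/|u|$, using that $\int_{\H}\frac{1}{|u|}\,d\nu=\|u\|_{L^1(\H)}/\|u\|_{L^2(\H)}^2$. Since $\int_{\H}\log|u|\,d\nu=\tfrac12\big(E+\log\|u\|_{L^2(\H)}^2\big)$, rearranging the resulting inequality yields
\[
E\;\ge\;-2\log\|u\|_{L^1(\H)}+\log\|u\|_{L^2(\H)}^2 .
\]
The $\nu$-null set $\{u=0\}$ causes no difficulty, so this is an unconditional lower bound valid for every admissible $u$.

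Combining it with the upper bound \eqref{LogSobolev1}, namely $E\le \frac{Q}{2s}\log\!\big(C_{B,s}\langle \Ls u,u\rangle_{L^2(\H)}/\|u\|_{L^2(\H)}^2\big)$, I obtain
\[
\log\frac{\|u\|_{L^2(\H)}^2}{\|u\|_{L^1(\H)}^2}\;\le\;\frac{Q}{2s}\log\!\left(C_{B,s}\frac{\langle \Ls u,u\rangle_{L^2(\H)}}{\|u\|_{L^2(\H)}^2}\right).
\]
Exponentiating and then raising to the power $2s/Q$ clears every logarithm; using $2+\frac{4s}{Q}=\frac{2(Q+2s)}{Q}$ to identify the exponent of $\|u\|_{L^2(\H)}$ gives exactly \eqref{Nash.g}, with the same constant $C_{B,s}$ from \eqref{bestfrac}. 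Finally, \eqref{Nash.g1} follows at once by inserting the comparison $\langle \Ls u,u\rangle_{L^2(\H)}\le \|U_{s}\|_{\rm op}\langle \Lss u,u\rangle_{L^2(\H)}$ already used in passing from \eqref{1b} to \eqref{2b}, with $\|U_{s}\|_{\rm op}$ estimated in \eqref{est1}.

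The main obstacle is the Jensen lower bound: one must set up the probability measure $d\nu$ correctly, check the integrability needed to apply Jensen to $1/|u|$, and track signs so the entropy estimate points in the useful direction; the remainder is purely algebraic bookkeeping of exponents. I note that \eqref{Nash.g} can alternatively be proved without the logarithmic Sobolev inequality, by interpolating $\|u\|_{L^2(\H)}\le \|u\|_{L^1(\H)}^{\theta}\|u\|_{L^{2Q/(Q-2s)}(\H)}^{1-\theta}$ with $\theta=\frac{2s}{Q+2s}$ and then applying the fractional Sobolev inequality \eqref{1b}; this shortcut yields the identical constant, which serves as a useful consistency check.
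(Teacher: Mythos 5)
Your proof is correct and follows essentially the same route as the paper: Jensen's inequality for the probability measure $|u|^2\|u\|_{L^2(\H)}^{-2}\,dx$ applied to $\log(1/|u|)$ gives the lower bound on the entropy, which is then combined with the logarithmic Sobolev inequality \eqref{LogSobolev1} and exponentiated to yield \eqref{Nash.g}, with \eqref{Nash.g1} following from the bound on $\|U_s\|_{\rm op}$. Your closing remark that the same constant drops out of the interpolation $\|u\|_{L^2(\H)}\leq\|u\|_{L^1(\H)}^{\theta}\|u\|_{L^{2Q/(Q-2s)}(\H)}^{1-\theta}$ combined with \eqref{1b} is a valid consistency check, though the paper does not take that shortcut.
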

\begin{proof}
Using Jensen's inequality for the function $\theta(u)=\log \frac{1}{u}$
and for the probability measure $|u|^2 \|u\|^{-2}_{L^2(\H)}\,dx$ we get 
 \begin{eqnarray}\label{Nash1}
\log \left(\frac{\|u\|^{2}_{L^2(\H)}}{\|u\|_{L^1(\H)}} \right) & = & \theta \left( \frac{\|u\|_{L^1(\H)}}{\|u\|^{2}_{L^2(\H)}}\right)=\theta \left( \int_{\H}\frac{1}{|u|}|u|^2 \|u\|^{-2}_{L^2(\H)}\,dx\right)\nonumber\\
& \leq & \int_{\H} \theta \left( \frac{1}{|u|}\right) |u|^2 \|u\|^{-2}_{L^2(\H)}\,dx=\int_{\H} \frac{|u|^2}{\|u\|^{2}_{L^2(\H)}}\log |u| \,dx\,.
 \end{eqnarray}
An application of the log-Sobolev inequality as in Theorem \ref{thm:Logsob} yields
 \begin{equation}\label{Nash2}
 \int_{\H} \frac{|u|^2}{\|u\|^{2}_{L^2(\H)}}\log |u| \,dx \leq \log \|u\|_{L^2(\H)}+\frac{Q}{4s} \log \left( C_{B,s} \frac{\langle\Ls u, u\rangle_{L^{2}(\H)}}{\|u\|^{2}_{L^2(\H)}}\right)\,. 
 \end{equation}
  Now using the properties of the logarithm, summing up  inequalities \eqref{Nash1} and \eqref{Nash2}, and rearranging the terms of this sum, we get 
 \[
 \left(1+\frac{Q}{2s} \right) \log \|u\|_{L^2(\H)} \leq \frac{Q}{4s} \log \left(C_{B,s} \|u\|_{L^1(\H)}^{\frac{4s}{Q}}\langle\Ls u, u\rangle_{L^{2}(\H)} \right)\,,
 \]
 which is equivalent to \eqref{Nash.g}. Now, \eqref{Nash.g} implies, as before, \eqref{Nash.g1} and the proof is complete.
\end{proof}
\begin{cor}
For $C_{B,1}$ as in \eqref{bestintsob} we get the following standard form of Nash's inequality on $\H$
\begin{equation}\label{Nash.hor}
 \|u\|_{L^2(\H)}^{2+\frac{4}{Q}} \leq C_{B,1} \|u\|_{L^1(\H)}^{\frac{4}{Q}}\|\nabla_{\H}u\|^{2}_{L^{2}(\H)}\,.
\end{equation}
\end{cor}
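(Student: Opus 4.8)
The plan is to obtain \eqref{Nash.hor} as the direct specialisation of Theorem \ref{THM:Nash} to the endpoint $s=1$. Setting $s=1$ in the (modified) fractional Nash inequality \eqref{Nash.g}, the exponent $\tfrac{4s}{Q}$ becomes $\tfrac{4}{Q}$ and the constant $C_{B,s}$ becomes $C_{B,1}$, which is precisely the quantity recorded in \eqref{bestintsob}. Thus \eqref{Nash.g} reads
\[
\|u\|_{L^2(\H)}^{2+\frac{4}{Q}} \leq C_{B,1}\|u\|_{L^1(\H)}^{\frac{4}{Q}}\langle \mathcal{L}_1 u, u\rangle_{L^2(\H)}\,,
\]
so that only the right-hand Dirichlet form needs to be rewritten in terms of the horizontal gradient.

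To do this, I would recall from the preliminaries that at the endpoint $s=1$ the modified operator degenerates to the sub-Laplacian itself, $\mathcal{L}_1 = \mathcal{L} = -\nabla_{\H}^{*}\cdot\nabla_{\H}$. Since each horizontal vector field $X_i$, $Y_i$ is formally skew-adjoint on $L^2(\H)$, integration by parts (with no boundary contribution for the relevant class of functions) yields the Dirichlet-form identity
\[
\langle \mathcal{L} u, u\rangle_{L^2(\H)} = \sum_{i=1}^{n}\left(\|X_i u\|^2_{L^2(\H)} + \|Y_i u\|^2_{L^2(\H)}\right) = \|\nabla_{\H}u\|^2_{L^2(\H)}\,.
\]
Substituting this into the displayed inequality gives exactly \eqref{Nash.hor}.

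This argument has no genuine obstacle: once Theorem \ref{THM:Nash} is available, the corollary is a one-line substitution. The only point deserving a moment's care is the Dirichlet-form identity $\langle \mathcal{L}u,u\rangle_{L^2(\H)} = \|\nabla_{\H}u\|^2_{L^2(\H)}$, which relies on the skew-symmetry of the horizontal fields and hence on the absence of boundary terms; this is unproblematic for $u\in C_0^\infty(\H)$, and the general case follows by the usual density of $C_0^\infty(\H)$.
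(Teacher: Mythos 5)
Your proposal is correct and follows essentially the same route as the paper: specialise \eqref{Nash.g} to $s=1$ and identify $\langle \mathcal{L}u,u\rangle_{L^2(\H)}$ with $\|\nabla_{\H}u\|^2_{L^2(\H)}$ (the paper phrases this identification via $\langle\mathcal{L} u,u\rangle^{1/2}_{L^{2}(\H)}=\|\mathcal{L}^{1/2} u\|_{L^2(\H)}=\|\nabla_{\H}u\|_{L^2(\H)}$ rather than by integration by parts, but the content is identical). No gap.
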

\begin{proof}
    The proof follows immediately from \eqref{Nash.g} for $s=1$ and the equality \[\langle\L u,u\rangle^\frac{1}{2}_{L^{2}(\H)}=\|\L^{1/2} u\|_{L^2(\H)}=\||\nabla_{\H}| u\|_{L^2(\H)}=\|\nabla_{\H}u\|_{L^2(\H)}\,.\]
    The proof is complete.
\end{proof}

Standard applications of Nash's inequality relate to Markovian semigroups, see e.g. \cite[Section II.5]{VSCC93}. Below we give a short application of Nash's inequality to estimate the time-decay rate for the solution to the heat equation on $\H$ with respect to $\L$.

\begin{cor}\label{cor:par}
Let $f_0 \geq 0$ be such that $f_0\in L^1(\H)\cap L^2(\H)$.
Then the solution $f$  to the linear heat equation on $\H$ 
\begin{equation}
    \label{heat.eq}
    \partial_t f+\mathcal{L} f=0,\quad f(0,x)=f_0(x),
\end{equation}
satisfies the following time-decay estimate for all $t \geq 0$
\begin{equation*}
    \label{heatest}
    \|f(t,\cdot)\|_{L^2(\H)}\leq \left( \|f_0\|_{L^2(\H)}^{-\frac{4}{Q}}
    +\frac{4}{QC_{B,1}}\|f_0\|_{L^1(\H)}^{-\frac{4}{Q}}t\right)^{-\frac{Q}{4}}\,,
\end{equation*}
 where $C_{B,1}$ is given in \eqref{bestintsob}.
\end{cor}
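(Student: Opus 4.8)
The plan is to track the evolution of the squared $L^2$-norm $N(t):=\|f(t,\cdot)\|^2_{L^2(\H)}$ and to convert Nash's inequality into a closed differential inequality for $N$ that integrates explicitly. First I would differentiate $N$ in time; using the equation \eqref{heat.eq} together with the self-adjointness and positivity of $\mathcal{L}$ (integration by parts) one obtains
\begin{equation*}
N'(t)=2\int_{\H}f\,\partial_t f\,dx=-2\int_{\H}f\,\mathcal{L}f\,dx=-2\langle \mathcal{L}f,f\rangle_{L^2(\H)}\,.
\end{equation*}

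Next I would invoke the standard form of Nash's inequality \eqref{Nash.hor}, which after rearrangement yields the lower bound $\langle\mathcal{L}f,f\rangle_{L^2(\H)}\geq \|f\|^{2+\frac{4}{Q}}_{L^2(\H)}\big/\big(C_{B,1}\|f\|^{\frac{4}{Q}}_{L^1(\H)}\big)$. To reduce the right-hand side to an expression involving only $N(t)$ and the initial data, I would use conservation of the $L^1$-mass along the heat flow: since $f_0\geq0$, the Markovian (positivity-preserving and mass-conserving) character of the semigroup generated by $\mathcal{L}$ ensures $f(t,\cdot)\geq 0$ and $\|f(t,\cdot)\|_{L^1(\H)}=\|f_0\|_{L^1(\H)}$ for all $t\geq0$. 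Substituting this and writing $\|f\|^{2+\frac{4}{Q}}_{L^2(\H)}=N(t)^{1+\frac{2}{Q}}$, I arrive at
\begin{equation*}
N'(t)\leq -\frac{2}{C_{B,1}\|f_0\|^{\frac{4}{Q}}_{L^1(\H)}}\,N(t)^{1+\frac{2}{Q}}\,.
\end{equation*}

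Finally I would integrate this separable differential inequality. Dividing by $N(t)^{1+\frac{2}{Q}}$ and recognising the left-hand side as $-\frac{Q}{2}\frac{d}{dt}N(t)^{-\frac{2}{Q}}$, one gets $\frac{d}{dt}N(t)^{-\frac{2}{Q}}\geq \frac{4}{QC_{B,1}}\|f_0\|^{-\frac{4}{Q}}_{L^1(\H)}$. Integrating from $0$ to $t$ and using $N(0)^{-\frac{2}{Q}}=\|f_0\|^{-\frac{4}{Q}}_{L^2(\H)}$ gives
\begin{equation*}
N(t)^{-\frac{2}{Q}}\geq \|f_0\|^{-\frac{4}{Q}}_{L^2(\H)}+\frac{4}{QC_{B,1}}\|f_0\|^{-\frac{4}{Q}}_{L^1(\H)}\,t\,,
\end{equation*}
and taking the $(-Q/2)$-th power followed by a square root produces exactly the claimed bound on $\|f(t,\cdot)\|_{L^2(\H)}$.

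The time-differentiation and the ODE integration are routine; the step that requires the most care is the justification of the $L^1$-mass conservation $\|f(t,\cdot)\|_{L^1(\H)}=\|f_0\|_{L^1(\H)}$, equivalently the positivity preservation of the heat semigroup and the unit mass of the heat kernel, together with the regularity and integrability needed to differentiate under the integral and integrate by parts. These facts follow from the standard theory of the heat semigroup $e^{-t\mathcal{L}}$ associated with the sub-Laplacian on $\H$, and it is there that the hypotheses $f_0\geq 0$ and $f_0\in L^1(\H)\cap L^2(\H)$ are used.
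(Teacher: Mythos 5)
Your proposal is correct and follows essentially the same route as the paper: differentiate $\|f(t,\cdot)\|_{L^2(\H)}^2$ along the flow, use positivity of the heat kernel to get $L^1$-mass conservation, insert Nash's inequality \eqref{Nash.hor}, and integrate the resulting separable ODE. Your intermediate displays are in fact slightly cleaner, since the paper's proof carries a sign typo in the exponent of $\|f_0\|_{L^1(\H)}$ that your computation avoids.
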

\begin{proof}
Since  $f_0\geq 0$, the positivity of the heat kernel (see e.g. \cite[p. 48]{VSCC93}) $h_t$ implies that  $f(t,x)\geq 0$, as well. Now, the mass conservation implies the equality of the $L^1$-norms
\begin{equation}\label{L1norm}
\int_{\H} f(t,x) dx=\int_{\H}\int_{\H} h_t(x y^{-1})f_0(y) dydx=\int_{\H} f_0(y) dy,    
\end{equation}
where we applied Fubini's theorem and the fact that  $\|h_t\|_{L^1(\H)}=1.$ Now, multiplying by $f$ the heat equation \eqref{heat.eq}, and integrating over $\H$ we get 
\[
\langle \partial_t f(t,\cdot), f(t,\cdot) \rangle_{L^2(\H)}+\langle \mathcal{L} f(t,\cdot), f(t,\cdot) \rangle_{L^2(\H)}=0\,,
\]
or, equivalently,
\[
\frac{d}{dt}\|f(t,\cdot)\|^{2}_{L^2(\H)}=-2\|\nabla_{\H} f(t,\cdot)\|^{2}_{L^2(\H)}\,.
\]
If we denote by $y(t)$ the norm $y(t):=\|f(t,\cdot)\|^{2}_{L^2(\H)}$, by \eqref{Nash.hor} we get
\begin{equation*}
    \begin{split}
   y'(t)&=\frac{d}{dt}\|f(t,\cdot)\|^{2}_{L^2(\H)}\\&
   =-2\|\nabla_{\H} f(t,\cdot)\|^{2}_{L^2(\H)}\\&
   \stackrel{\eqref{Nash.hor}}\leq -2C^{-1}_{B,1}\|f(t,\cdot)\|^{2+\frac{4}{Q}}_{L^{2}(\H)}\|f(t,\cdot)\|^{\frac{4}{Q}}_{L^{1}(\H)}\\&
\stackrel{\eqref{L1norm}}=-2C^{-1}_{B,1}y^{1+\frac{2}{Q}}(t)\|f_{0}\|^{\frac{4}{Q}}_{L^{1}(\H)},
    \end{split}
\end{equation*}
which, after integration on $t \geq 0$, gives the estimate 
\[
\|f(t,\cdot)\|_{L^2(\H)}\leq \left(\|f_0\|_{L^2(\H)}^{-\frac{4}{Q}}+\frac{4}{QC_{B,1}}\|f_0\|_{L^1(\H)}^{-\frac{Q}{4}}t \right)^{-\frac{Q}{4}}\,,
\]
and this finishes the proof. 
\end{proof}
\section{Poincar\'e inequality on $\H$}
 In the section, we prove several versions of  the Poincar\'e inequality on $\H$. In 1989 Beckner \cite{Bec89} proved the following generalised Poincar\'e inequality on $\mathbb{R}^n$:
\begin{equation}\label{Poi.bec}
\frac{1}{2-p}\left[\int_{\Rn}f^2\,d\mu-\left(\int_{\Rn}|f|^p\,d\mu \right)^{\frac{2}{p}} \right]\leq \int_{\Rn}|\nabla f|^2\,d\mu\,, \quad p \geq 1\,,
\end{equation}
for $d\mu$ the Gaussian measure on $\mathbb{R}^n$. Recall the classical Poincar\'e inequality on $\Rn$:
\begin{equation}
    \label{Poi}
    \int_{\Rn}|f-\mu(f)|^{q}\,d\mu \leq c_o \mu(|\nabla f|^q)\,, \quad q \geq 1\,,
\end{equation}
where we have denoted $\mu(f):=\int_{\Rn}f\,d\mu$.
 It is easy to check that the left-hand side of the generalised Poincar\'e inequality \eqref{Poi.bec} for $p=1$ equals the left-hand side of the classical Poincar\'e inequality \eqref{Poi} for $q=2$. 
 This is true in more generality; indeed if $\nu$ is a probability measure on any measure space $\mathbb{X}$, then we have 
\[
\int_{\mathbb{X}}f^2\,d\nu-\left(\int_{\mathbb{X}}f\,dx\right)^2=\int_{\mathbb{X}}(f-\nu(f))^2\,d\nu\,.
\]
\begin{thm}\label{thm.poi.H}
    Let $\mu$ be the semi-Gaussian measure as in Theorem \ref{semi-g}.  Then, for $p\leq 2$, the  following generalised Poincar\'e inequality with respect to $\mu$ holds true on $\H$:
    \begin{equation}
        \label{Poin.H}
         \int_{\H}|g|^2\,d\mu-\left( \int_{\H}|g|^{p}\,d\mu\right)^{\frac{2}{p}} \leq \frac{2(2-p)}{p}\int_{\H}|\nabla_{\H}g|^2\,d\mu\,.
    \end{equation}
    In particular, for $p=1$ we have 
    \[
    \int_{\H}|g|^2\,d\mu -\left(\int_{\H}|g|\,d\mu \right)^2 \leq 2\int_{\H}|\nabla_{\H}g|^2\,d\mu\,.
    \]
\end{thm}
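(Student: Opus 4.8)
The plan is to derive the generalised Poincar\'e inequality \eqref{Poin.H} directly from the semi-Gaussian log-Sobolev inequality of Theorem \ref{semi-g}, adapting Beckner's strategy \cite{Bec89} to the present ``semi-Gaussian'' (non-probability) setting. The case $p=2$ is trivial, since both sides of \eqref{Poin.H} vanish, so I assume $0<p<2$. Both sides of \eqref{Poin.H} are homogeneous of degree two in $g$ (replacing $g$ by $\lambda g$ multiplies each side by $\lambda^2$), so I may rescale and assume $\|g\|_{L^2(\H,\mu)}=1$; it then suffices to prove that $1-\left(\int_{\H}|g|^p\,d\mu\right)^{2/p}\le \frac{2(2-p)}{p}D$, where $D:=\int_{\H}|\nabla_{\H}g|^2\,d\mu$. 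By the density of $C_0^\infty(\H)$ in $L^2(\H,\mu)$ it is enough to treat smooth compactly supported $g$.

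The key observation is that, under the normalisation $\|g\|_{L^2(\H,\mu)}=1$, the measure $d\nu:=|g|^2\,d\mu$ is a genuine probability measure on $\H$, even though $\mu$ itself is only a semi-probability measure with $\mu(\H)=\infty$. I would then express the $L^p$-mass as an expectation against $\nu$: since $p-2<0$,
\[
\int_{\H}|g|^p\,d\mu=\int_{\H}|g|^{p-2}\,d\nu=\int_{\H} e^{-(2-p)\log|g|}\,d\nu\,.
\]
Applying Jensen's inequality to the convex function $t\mapsto e^{-(2-p)t}$ and the $\nu$-measurable function $\log|g|$ (the set $\{g=0\}$ is $\nu$-null) gives
\[
\int_{\H}|g|^p\,d\mu\ \ge\ \exp\!\left(-(2-p)\int_{\H}\log|g|\,d\nu\right)=\exp\!\left(-(2-p)\int_{\H}|g|^2\log|g|\,d\mu\right)\,.
\]

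Now I would invoke Theorem \ref{semi-g}, which under the normalisation $\|g\|_{L^2(\H,\mu)}=1$ reads $\int_{\H}|g|^2\log|g|\,d\mu\le D$. Since $2-p>0$, this yields $\int_{\H}|g|^p\,d\mu\ge e^{-(2-p)D}$, and raising both sides to the positive power $2/p$ gives
\[
\left(\int_{\H}|g|^p\,d\mu\right)^{2/p}\ \ge\ e^{-\frac{2(2-p)}{p}D}\,.
\]
Finally, the elementary inequality $1-e^{-x}\le x$ for $x\ge 0$ produces $1-\left(\int_{\H}|g|^p\,d\mu\right)^{2/p}\le 1-e^{-\frac{2(2-p)}{p}D}\le \frac{2(2-p)}{p}D$, which is the normalised form of \eqref{Poin.H}; undoing the rescaling proves the theorem, and the choice $p=1$ recovers the stated special case with constant $2$.

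The conceptual core, and the step that produces exactly the constant $\frac{2(2-p)}{p}$, is the pairing of Jensen's inequality for the exponential with the log-Sobolev bound; the factor $2/p$ is precisely the cost of passing from the $L^p$-mass to its $(2/p)$-th power, while the slack relative to Beckner's sharp Euclidean constant $(2-p)$ is the combined looseness of Jensen's inequality and of $1-e^{-x}\le x$. I expect the main point requiring care to be not any hard estimate but the bookkeeping around the measure: verifying that $|g|^2\,d\mu$ is a probability measure despite $\mu(\H)=\infty$, checking that $\log|g|\in L^1(\nu)$ so that Jensen's inequality legitimately applies, and observing that when $\|g\|_{L^2(\H,\mu)}^2<\|g\|_{L^p(\H,\mu)}^2$ the left-hand side of \eqref{Poin.H} is negative and the inequality holds automatically.
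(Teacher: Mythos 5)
Your proof is correct, and it reaches the paper's constant $\tfrac{2(2-p)}{p}$ by a genuinely different mechanism for the key reduction. Both arguments funnel the left-hand side of \eqref{Poin.H} through the entropy term $\int_{\H}|g|^{2}\log|g|\,d\mu$ and then invoke Theorem \ref{semi-g}; the difference lies in how that reduction is performed. The paper follows Beckner's original scheme: it shows that $q\mapsto\bigl(\int_{\H}|g|^{2/q}\,d\mu\bigr)^{q}$ is convex (via Cauchy--Schwarz applied to $b''$), uses the monotonicity of difference quotients to compare with the tangent line at $q=1$, and computes the derivative there by l'H\^{o}pital, obtaining the linear bound $\int|g|^{2}\,d\mu-\bigl(\int|g|^{2/q}\,d\mu\bigr)^{q}\leq 2(q-1)\int|g|^{2}\log|g|\,d\mu$ before substituting $p=2/q$. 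You instead apply Jensen's inequality for $t\mapsto e^{-(2-p)t}$ against the probability measure $|g|^{2}\,d\mu$, which gives the exponential bound $\bigl(\int|g|^{p}\,d\mu\bigr)^{2/p}\geq \exp\bigl(-\tfrac{2(2-p)}{p}\int|g|^{2}\log|g|\,d\mu\bigr)$ --- in fact a slightly stronger intermediate statement than the paper's linear one, since $e^{-x}\geq 1-x$ --- and then linearize at the end with $1-e^{-x}\leq x$. Your route buys a shorter computation (no second derivative, no limit), at the modest cost of the integrability check $\log|g|\in L^{1}(|g|^{2}\,d\mu)$, which is indeed automatic for compactly supported smooth $g$ since $t^{2}|\log t|$ is bounded on bounded sets and $\mu$ is finite on compacta; the homogeneity/normalization and density steps you describe match what the paper does implicitly. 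The only caveat is the same one the paper leaves implicit: the effective range is $0<p\leq 2$ (the paper's substitution $p=2/q$, $q\geq 1$, imposes the same restriction), which still covers the stated special case $p=1$.
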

\begin{proof}
   For some suitable $g$ we consider the function $b=b(q):=q \log \left( \int_{\H}|g|^{\frac{2}{q}}\,d\mu \right)$ for $q> 1$. We denote by $I$ the integral 
   \[
   I(q)=\int_{\H} |g|^{\frac{2}{q}}\,d\mu\,,
   \]
   and hence, we rewrite $b(q)$ as
   \begin{equation*}
       b(q)=q\log I(q).
   \end{equation*}
   Let us compute derivatives of $I(q)$. Then, we have
   \begin{equation*}
       I'(q)=\int_{\H}\log |g||g|^{\frac{2}{q}}\left(-\frac{2}{q^2}\right)\,d\mu,
   \end{equation*}
   and 
    \begin{equation*}
       I''(q)=\int_{\H}\left((\log|g|)^{2}|g|^{\frac{2}{q}}\left(-\frac{2}{q^2}\right)^{2}+\log |g||g|^{\frac{2}{q}}\left(\frac{4}{q^3}\right)\right)\,d\mu.
   \end{equation*}
   Then, by using the last facts, we have
   \[
   b'(q)=\log I(q)+\frac{q I'(q)}{I(q)}\,,
   \]
   and 
   \begin{equation}
   \begin{split}
       b''(q)&=\frac{I(q)(2I'(q)+qI''(q))-q(I'(q))^{2}}{I^{2}(q)}\\&
       =\frac{q}{I^{2}(q)}\Biggl{[}\left(\int_{\H}|g|^{\frac{2}{q}}d\mu\right)\left(\int_{\H}(\log|g|)^{2}|g|^{\frac{2}{q}}\left(-\frac{2}{q^2}\right)^{2}d\mu\right)\\&
       -\left(\int_{\H}\log |g||g|^{\frac{2}{q}}\left(-\frac{2}{q^2}\right)\,d\mu\right)^{2}\Biggl{]}.
   \end{split}
   \end{equation}
   It is then clear that $b$ is a convex function since by the Cauchy–Schwarz inequality we have 
   \[
   \left(\int_{\H}(\log |g|)^2 |g|^{\frac{2}{q}}\left(-\frac{2}{q^2}\right)^2\,d\mu \right)\left( \int_{\H}|g|^{\frac{2}{q}}\,d\mu\right) \geq \left(\int_{\H}\log |g||g|^{\frac{2}{q}}\left(-\frac{2}{q^2}\right)\,d\mu \right)^2\,, 
   \]
   which implies that $b'' \geq 0$. The convexity of $b$ implies in turn the convexity of the function $q \mapsto e^{b(q)}$. Now let us recall the following characterisation of the convexity of functions of one variable: the function $f$ is convex if and only if the function 
   \[
   R(x,y)=\frac{f(y)-f(x)}{y-x}
   \]
   is monotonically non-decreasing in $x$ for fixed $y$, and vice versa. Hence by the latter characterisation we have that the function 
   \[
   \phi(q):= \frac{e^{b(1)}-e^{b(q)}}{1-q}\,,
   \]
   is monotonically non-increasing for $q \geq 1$. Consequently, we get 
   \begin{equation}\label{phi.decreasing}
   \phi(q)\leq \lim_{q \rightarrow 1}\phi(q)\,.
   \end{equation}
   We note that
   \begin{equation}\label{phi(q)}
   \phi(q)=\frac{\int_{\H}|g|^2\,d\mu-\left( \int_{\H}|g|^{\frac{2}{q}}\,d\mu\right)^q}{1-q}\,.
   \end{equation}
   For the computation of the limit $\lim\limits_{q \rightarrow 1}\phi(q)$ we will use the l'H\^{o}pital rule.
   We have 
   \begin{eqnarray*}
       \frac{d}{dq}\left(\int_{\H}|g|^{\frac{2}{q}}\,d\mu \right)^q & = &
       \left(\int_{\H}|g|^{\frac{2}{q}}\,d\mu \right)^q \frac{d}{dq} \left[q \log \left(\int_{\H}|g|^{2/q}\,d\mu \right) \right] \\
      & = &  \left(\int_{\H}|g|^{\frac{2}{q}}\,d\mu \right)^q \left[\log \left(\int_{\H}|g|^{2/q}\,d\mu \right)-\frac{2}{q^2} \log \int_{\H}|g|\,d\mu  \right],
   \end{eqnarray*}
   which implies 
   \begin{eqnarray*}
     \lim_{q \rightarrow 1}\frac{d}{dq}\left(\int_{\H}|g|^{\frac{2}{q}}\,d\mu \right)^q & = & 2 \int_{\H}\ |g|^2 \log|g|\,d\mu
   \end{eqnarray*}
   and so $\lim\limits_{q \rightarrow 1}\phi(q)=2 \int_{\H}\ |g|^2 \log|g|\,d\mu$. Combining this together with \eqref{phi.decreasing} and \eqref{phi(q)} we get 
   \begin{equation*}
       \int_{\H}|g|^2\,d\mu-\left( \int_{\H}|g|^{\frac{2}{q}}\,d\mu\right)^q \leq 2(q-1) \int_{\H}|g|^2 \log |g|\,d\mu\,,
   \end{equation*}
   or after replacing $g$ by $\frac{g}{\|g\|_{L^{2}(\H,\mu)}}$ 
   \begin{equation}\label{before.P}
   \begin{split}
      &\int_{\H}\frac{|g|^2}{\gnorm^2}\,d\mu-\left(\int_{\H}\left(\frac{|g|}{\gnorm}\right)^{\frac{2}{q}}\,d\mu\right)^{q}
      \\&\leq 2 (q-1)\int_{\H}\frac{|g|^2}{\gnorm^2} \log \left(\frac{|g|}{\gnorm} \right)\,d\mu\,. 
   \end{split}
   \end{equation}
   On the other hand, the semi-Gaussian inequality \eqref{gaus.log.sob} gives 
   \begin{equation}
       \label{semi=g-rescaled}
       \int_{\H}\frac{|g|^2}{\gnorm^2}\log \frac{|g|}{\gnorm}\,d\mu \leq \frac{1}{\gnorm^2}\int_{\H}|\nabla_{\H}g|^2\,d\mu\,.
   \end{equation}
   Finally, a combination of \eqref{before.P} together with \eqref{semi=g-rescaled} gives 
  \begin{equation}\label{D+semig}
  \begin{split}
      & \int_{\H}\frac{|g|^2}{\gnorm^2}\,d\mu-\left(\int_{\H}\left(\frac{|g|}{\gnorm}\right)^{\frac{2}{q}}\,d\mu\right)^q\\&
      \leq 2 (q-1) \frac{1}{\gnorm^2}\int_{\H}|\nabla_{\H}g|^2\,d\mu\,,
  \end{split}
  \end{equation}
  or after simplification 
  \begin{equation}\label{forq}
  \int_{\H}|g|^2d\mu-\left(\int_{\H}|g|^{\frac{2}{q}}\right)^q\leq 2 (q-1)\int_{\H}|\nabla_{\H}g|^2\,d\mu \,,
  \end{equation}
   and the proof is complete if one sets  $p=\frac{2}{q}$ in \eqref{forq}.
\end{proof}

\begin{thm}\label{thm.poi.haar}
   For $q > 1$ the following Poincar\'e type inequality with respect to the Haar measure on $\H$ holds true
    \begin{equation}
        \label{Poi.haar.a}
         \int_{\H}\frac{|g|^2}{\|g\|_{L^2(\H)}^2}\,dx-\left( \int_{\H}\left(\frac{|g|}{\|g\|_{L^2(\H)}} \right)^{\frac{2}{q}}\,dx\right)^q \leq \frac{Q(q-1)}{2} \log \left( C_{B,1} \frac{\|\nabla_{\H}g\|_{L^2(\H)}^{2}}{\|g\|^{2}_{L^2(\H)}}\right)\,, 
    \end{equation}
    where $C_{B,1}$ is given in \eqref{bestintsob}.
    Additionally, for $s \in (0,1]$ and for $q \geq 1$, we have the following Poincar\'e type (modified) fractional inequality  
    \begin{equation}
        \label{Poi.haar.b}
         \int_{\H}\frac{|g|^2}{\|g\|_{L^2(\H)}^2}\,dx-\left( \int_{\H}\left(\frac{|g|}{\|g\|_{L^2(\H)}} \right)^{\frac{2}{q}}\,dx\right)^q \leq \frac{Q(q-1)}{2s} \log \left( C_{B,s} \frac{\langle \mathcal{L}_sg,g \rangle_{L^2(\H)}}{\|g\|_{L^2(\H)}^{2}}\right)\,,
    \end{equation}
    where the constant $C_{B,s}$ is as in \eqref{bestfrac}, as well as the following Poincar\'e type inequality that involves fractional powers of the sub-Laplacian $\mathcal{L}$
     \begin{equation}
        \label{Poi.haar.c}
         \int_{\H}\frac{|g|^2}{\|g\|_{L^2(\H)}^2}\,dx-\left( \int_{\H}\left(\frac{|g|}{\|g\|_{L^2(\H)}} \right)^{\frac{2}{q}}\,dx\right)^q \leq \frac{Q(q-1)}{2s} \log \left( C_{B,s}\|U_s\|_{\text{op}} \frac{\langle \mathcal{L}^s g,g \rangle_{L^2(\H)}}{\|g\|_{L^2(\H)}^{2}}\right)\,,
    \end{equation}where the operator norm $\|U_s\|_{\text{op}}$ has been estimated in \eqref{est1}.
    
\end{thm}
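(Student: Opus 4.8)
The plan is to recycle the convexity machinery developed in the proof of Theorem \ref{thm.poi.H}, observing that it never used the probabilistic nature of the semi-Gaussian measure $\mu$ and hence applies verbatim to the Haar measure $dx$. Concretely, for $q>1$ I would set $I(q):=\int_{\H}|g|^{2/q}\,dx$ and $b(q):=q\log I(q)$; computing $I'$ and $I''$ exactly as before and invoking the Cauchy--Schwarz inequality gives $b''(q)\geq 0$, so that $q\mapsto e^{b(q)}=\left(\int_{\H}|g|^{2/q}\,dx\right)^q$ is convex. The monotonicity characterisation of the difference quotient then forces $\phi(q):=\frac{e^{b(1)}-e^{b(q)}}{1-q}$ to be non-increasing for $q\geq 1$, whence $\phi(q)\leq \lim_{q\to 1}\phi(q)$. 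Since an l'H\^{o}pital computation identical to the one in Theorem \ref{thm.poi.H} yields $\lim_{q\to 1}\phi(q)=2\int_{\H}|g|^2\log|g|\,dx$, I obtain the Haar-measure master inequality
\[
\int_{\H}|g|^2\,dx-\left(\int_{\H}|g|^{2/q}\,dx\right)^q\leq 2(q-1)\int_{\H}|g|^2\log|g|\,dx\,.
\]

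Next I would normalise, replacing $g$ by $g/\|g\|_{L^2(\H)}$, to get
\[
\int_{\H}\frac{|g|^2}{\|g\|_{L^2(\H)}^2}\,dx-\left(\int_{\H}\left(\frac{|g|}{\|g\|_{L^2(\H)}}\right)^{2/q}\,dx\right)^q\leq 2(q-1)\int_{\H}\frac{|g|^2}{\|g\|_{L^2(\H)}^2}\log\left(\frac{|g|}{\|g\|_{L^2(\H)}}\right)\,dx\,.
\]
Using the elementary identity $\log\frac{|g|}{\|g\|_{L^2(\H)}}=\tfrac12\log\frac{|g|^2}{\|g\|_{L^2(\H)}^2}$, the right-hand side becomes $(q-1)\int_{\H}\frac{|g|^2}{\|g\|_{L^2(\H)}^2}\log\frac{|g|^2}{\|g\|_{L^2(\H)}^2}\,dx$, that is, $(q-1)$ times the entropy-type quantity appearing on the left of the logarithmic Sobolev inequalities of Section \ref{sec:LS}.

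Finally I would insert the three already-established log-Sobolev inequalities to bound this entropy term. For \eqref{Poi.haar.a} I apply \eqref{LogSobolevint} (the case $s=1$), which controls the entropy by $\tfrac{Q}{2}\log\left(C_{B,1}\|\na g\|_{L^2(\H)}^2/\|g\|_{L^2(\H)}^2\right)$; multiplying by $(q-1)$ yields exactly the constant $\tfrac{Q(q-1)}{2}$. For \eqref{Poi.haar.b} I apply the modified fractional inequality \eqref{LogSobolev1}, and for \eqref{Poi.haar.c} the fractional-power version \eqref{LogSobolev2}; both produce the factor $\tfrac{Q(q-1)}{2s}$ together with the respective arguments $\langle\Ls g,g\rangle_{L^2(\H)}$ and $\|U_s\|_{\mathrm{op}}\langle\Lss g,g\rangle_{L^2(\H)}$ inside the logarithm.

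I do not expect a genuine obstacle here, since the convexity engine is borrowed wholesale from Theorem \ref{thm.poi.H}. The only points requiring care are (i) confirming that the convexity step is truly measure-agnostic, which it is because the Cauchy--Schwarz inequality used to obtain $b''\geq0$ holds for any measure, and (ii) correctly tracking the factor of $2$ between $\log|g|$ and $\log|g|^2$ so that the constants $\tfrac{Q(q-1)}{2}$ and $\tfrac{Q(q-1)}{2s}$ emerge cleanly. The endpoint $q=1$ is trivial, both sides vanishing, which covers the cases $q\geq 1$ in \eqref{Poi.haar.b} and \eqref{Poi.haar.c}.
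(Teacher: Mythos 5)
Your proposal is correct and follows essentially the same route as the paper: the authors likewise observe that the convexity argument producing the master inequality \eqref{before.P} is independent of the underlying measure, restate it for the Haar measure, and then combine it with the log-Sobolev inequalities \eqref{LogSobolevint}, \eqref{LogSobolev1} and \eqref{LogSobolev2} to obtain \eqref{Poi.haar.a}, \eqref{Poi.haar.b} and \eqref{Poi.haar.c} respectively. Your bookkeeping of the factor of $2$ between $\log|g|$ and $\log|g|^2$ is also exactly how the constants $\tfrac{Q(q-1)}{2}$ and $\tfrac{Q(q-1)}{2s}$ arise in the paper.
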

\begin{proof}
    Recall that in the proof of Theorem \ref{thm.poi.H} the inequality \eqref{before.P} is obtained independently of the choice  of the considered measure. Hence, with respect to the Haar measure on $\H$ we have 
    \begin{equation*}
         \int_{\H}|g|^2\,dx-\left( \int_{\H}|g|^{\frac{2}{q}
        }\,dx\right)^q \leq 2(q-1) \int_{\H}|g|^2 \log |g|\,dx\,,
    \end{equation*}
    or 
    \begin{equation}
         \label{Poin.haar.1}
         \int_{\H}\frac{|g|^2}{\|g\|_{L^2(\H)}^2}\,dx-\left(\int_{\H}\left(\frac{|g|}{\|g\|_{L^2(\H)}}\right)^{\frac{2}{q}}\,dx\right)^q\leq 2 (q-1)\int_{\H}\frac{|g|^2}{\|g\|_{L^2(\H)}^2} \log \left(\frac{|g|}{\|g\|_{L^2(\H)}} \right)\,dx\,.
    \end{equation}
    
    A combination of \eqref{Poin.haar.1} together with the horizontal log-Sobolev inequality \eqref{LogSobolevint} proves \eqref{Poi.haar.a}. For the proof of \eqref{Poi.haar.b} we combine inequality \eqref{Poin.haar.1} and \eqref{LogSobolev1}, while the proof of \eqref{Poi.haar.c} is a combination of \eqref{Poin.haar.1} and \eqref{LogSobolev2}. This concludes the proof of the theorem.
\end{proof}

The following result is an immediate consequence of Theorem \ref{thm.poi.haar} since for all $x>0$ we have $\log x \leq x-1$.
\begin{cor}
     For $q > 1$ the following Poincar\'e type inequality with respect to the Haar measure on $\H$ holds true
    \begin{equation}
        \label{Poi.haar.a3}
         \int_{\H}|g|^2\,dx-\left( \int_{\H}|g|^{\frac{2}{q}}\,dx\right)^q \leq C_{B,1}\frac{Q(q-1)}{2}\int_{\H}|\nabla_{\H}g|^2\,dx\,,   
    \end{equation}
    where $C_{B,1}$ is given in \eqref{bestintsob}.
    Additionally, for $s \in (0,1]$ and for $q \geq 1$, we have the following Poincar\'e type (modified) fractional inequality  
    \begin{equation}
        \label{Poi.haar.b3}
         \int_{\H}|g|^2\,dx-\left( \int_{\H}|g|^{\frac{2}{q}}\,dx\right)^q \leq C_{B,s}\frac{Q(q-1)}{2s}  \langle \mathcal{L}_sg,g \rangle_{L^2(\H)}\,,
    \end{equation}
    where the constant $C_{B,s}$ is as in \eqref{bestfrac}, as well as the following Poincar\'e type inequality that involves fractional powers of the sub-Laplacian $\mathcal{L}$
     \begin{equation}
        \label{Poi.haar.c3}
         \int_{\H}|g|^2\,dx-\left( \int_{\H}|g|^{\frac{2}{q}}\,dx\right)^q\leq C_{B,s} \|U_s\|_{\text{op}} \frac{Q(q-1)}{2s} \langle \mathcal{L}^s g,g \rangle_{L^2(\H)}\,,
    \end{equation}where the operator norm $\|U_s\|_{\text{op}}$ has been estimated in \eqref{est1}.
\end{cor}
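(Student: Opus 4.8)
The plan is to obtain each of the three inequalities \eqref{Poi.haar.a3}, \eqref{Poi.haar.b3}, \eqref{Poi.haar.c3} directly from the corresponding normalized inequality in Theorem \ref{thm.poi.haar}, by turning its logarithmic right-hand side into a linear one through the elementary bound $\log x\le x-1$ (valid for all $x>0$) and then clearing the $\|g\|_{L^2(\H)}^2$-normalization. First I would start from \eqref{Poi.haar.a}, whose right-hand side is $\frac{Q(q-1)}{2}\log\big(C_{B,1}\|\nabla_{\H}g\|_{L^2(\H)}^2/\|g\|_{L^2(\H)}^2\big)$. Applying $\log x\le x-1$ with $x=C_{B,1}\|\nabla_{\H}g\|_{L^2(\H)}^2/\|g\|_{L^2(\H)}^2$ bounds this right-hand side by $\frac{Q(q-1)}{2}\big(C_{B,1}\|\nabla_{\H}g\|_{L^2(\H)}^2/\|g\|_{L^2(\H)}^2-1\big)$.

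Next I would clear the normalization by multiplying the resulting inequality through by $\|g\|_{L^2(\H)}^2>0$. The left-hand side of \eqref{Poi.haar.a} equals $\|g\|_{L^2(\H)}^{-2}$ times the un-normalized quantity $\int_{\H}|g|^2\,dx-\big(\int_{\H}|g|^{2/q}\,dx\big)^q$ appearing in \eqref{Poi.haar.a3}, so this multiplication reproduces exactly the left-hand side of \eqref{Poi.haar.a3}, while the right-hand side becomes $\frac{Q(q-1)}{2}\big(C_{B,1}\|\nabla_{\H}g\|_{L^2(\H)}^2-\|g\|_{L^2(\H)}^2\big)$. Finally, since $\frac{Q(q-1)}{2}>0$ for $q>1$, the term $-\frac{Q(q-1)}{2}\|g\|_{L^2(\H)}^2$ is nonpositive, and discarding it only enlarges the right-hand side, yielding \eqref{Poi.haar.a3}. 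The inequalities \eqref{Poi.haar.b3} and \eqref{Poi.haar.c3} follow by the identical three steps, starting instead from \eqref{Poi.haar.b} and \eqref{Poi.haar.c}: one replaces $C_{B,1}\|\nabla_{\H}g\|_{L^2(\H)}^2$ by $C_{B,s}\langle\mathcal{L}_s g,g\rangle_{L^2(\H)}$ and by $C_{B,s}\|U_s\|_{\text{op}}\langle\mathcal{L}^s g,g\rangle_{L^2(\H)}$ respectively, and uses the prefactor $\frac{Q(q-1)}{2s}$, which is again positive for $q>1$; the degenerate case $q=1$ in \eqref{Poi.haar.b3}--\eqref{Poi.haar.c3} is trivial since both sides vanish.

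I expect no genuine obstacle here, as the statement is flagged as an immediate consequence; the only point requiring care is the bookkeeping of the normalization. Concretely, one must remember that Theorem \ref{thm.poi.haar} is stated with the $\|g\|_{L^2(\H)}^2$-normalization on both sides, and that the clean constants in the corollary arise precisely because one multiplies through by $\|g\|_{L^2(\H)}^2$ \emph{before} discarding the negative constant term produced by the $-1$ in $\log x\le x-1$. Since that discarded term has the sign of $-\frac{Q(q-1)}{2s}\|g\|_{L^2(\H)}^2\le 0$ for $q\ge 1$, dropping it is legitimate and the inequalities follow.
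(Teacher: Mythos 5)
Your proposal is correct and follows exactly the route the paper intends: the corollary is stated there as an immediate consequence of Theorem \ref{thm.poi.haar} via $\log x\le x-1$, and your careful bookkeeping (multiplying through by $\|g\|_{L^2(\H)}^2$ before discarding the nonpositive term $-\frac{Q(q-1)}{2s}\|g\|_{L^2(\H)}^2$) is precisely what makes that one-line claim rigorous. No gaps.
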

\begin{rem}
    Let us point out that the inequalities \eqref{Poi.haar.a3}, \eqref{Poi.haar.b3} and \eqref{Poi.haar.c3} hold true also when the integration is taken on a ball $B_r(y)$ centered at $y$ with respect to a homogeneous norm on $\H$. Indeed, can repeat the previous argument for the function $b=b(q):= q \log \left(\int_{B_r(y)}|g|^{\frac{2}{q}} \,dx\right)$, for $q >1$, and get 
     \begin{equation*}
           \int_{B_r(y)}|g|^2\,dx-\left( \int_{B_r(y)}|g|^{\frac{2}{q}}\,dx\right)^q \leq 2(q-1) \int_{B_r(y)}|g|^2 \log |g|\,dx\,.
    \end{equation*} The latter inequality for $\frac{g}{\|g\|_{L^2(B_r(y))}}$, combined with the the horizontal log-Sobolev inequality \eqref{LogSobolevint} that holds true on any open set in $\H$, and the properties of the logarithm give
    \begin{equation}\label{Jer.comp.1}
     \int_{B_r(y)}|g|^2\,dx-\left( \int_{B_r(y)}|g|^{\frac{2}{q}}\,dx\right)^q \leq C_{B,1} \frac{Q(q-1)}{2} \int_{B_r(y)} |\nabla_{\H}g|^2\,dx\,.
    \end{equation}
    Moreover, since the inequalities \eqref{LogSobolev1} and \eqref{LogSobolev2} also hold true on any open set in $\H$, we also have 
    \begin{equation}\label{Jer.comp.2}
         \int_{B_r(y)}|g|^2\,dx-\left( \int_{B_r(y)}|g|^{\frac{2}{q}}\,dx\right)^q \leq C_{B,s}\frac{Q(q-1)}{2s}  \langle \mathcal{L}_sg,g \rangle_{L^2(B_r(y))}\,,
    \end{equation}
   and
     \begin{equation}\label{Jer.comp.3}
         \int_{B_r(y)}|g|^2\,dx-\left( \int_{B_r(y)}|g|^{\frac{2}{q}}\,dx\right)^q \leq C_{B,s} \|U_s\|_{\text{op}} \frac{Q(q-1)}{2s} \langle \mathcal{L}^s g,g \rangle_{L^2(B_r(y))}\,.
    \end{equation}
\end{rem}
Note that all the above inequalities are proving an upper bound for the quantity 
\[
  \int_{B_r(y)}|g|^2\,dx-\left( \int_{B_r(y)}|g|^{\frac{2}{q}}\,dx\right)^q\,,
\]
which can be compared with the quantity that appears on the left hand side of the famous Poincar\'e inequality on the balls due to Jerison. Let us recall the latter result:

In 1986 Jerison  \cite{Jer86} in his seminal paper proved the following Poincar\'e inequality on any nilpotent Lie group with respect to the Haar measure. In particular for $B_r(y)$ the ball with respect to the Carnot-Carath\'eodory distance centered at $y$ and of radius $r$, Jerison showed the following: 
	
\begin{thm}
	\label{thm.jerison}
For any $p \in [1,\infty)$, there exists a constant $P_{0}(r)=P_{0}(r,p)$ such that for all $f \in C^{\infty}(B_r(x))$
	\begin{equation}\label{jer}
	\int_{B_{r}(y)} |f(x)-f_{B_{r}(y)}|^{p}\,dx \leq P_{0}(r) \int_{B_{r}(y)} |\nabla_{\mathbb G}f(x)|^p\,dx\, ,
	\end{equation}
	where $f_{B_{r}(y)}:= \frac{1}{|B_{r}(y)|}\int_{B_{r}(y)}f(x)\,dx$.
\end{thm}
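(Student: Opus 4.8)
The plan is to deduce \eqref{jer} from two ingredients: a pointwise \emph{sub-representation} estimate that dominates $|f(x)-f_{B_{r}(y)}|$ by a Riesz potential of $|\nabla_{\G}f|$, and the $L^{p}$-boundedness of that potential on the ball. Throughout write $B=B_{r}(y)$, let $Q$ be the homogeneous dimension of $\G$, and let $d(\cdot,\cdot)$ denote the Carnot--Carath\'eodory distance. By left-invariance of the Haar measure and of $\nabla_{\G}$ I may take $y=0$. The first step is to fix the fundamental solution $\Gamma$ of the positive sub-Laplacian $\L=-\nabla_{\G}^{*}\cdot\nabla_{\G}$, which on a stratified group is homogeneous of degree $2-Q$ under the dilations $\delta_{\lambda}$; hence its horizontal gradient $\nabla_{\G}\Gamma$ is homogeneous of degree $1-Q$ and obeys $|\nabla_{\G}\Gamma(z^{-1}\circ x)|\le C\,d(x,z)^{1-Q}$.

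For $f\in C^{\infty}(B)$ I would first write, for every $x\in B$,
\[
f(x)-f_{B}=\frac{1}{|B|}\int_{B}\bigl(f(x)-f(w)\bigr)\,dw,
\]
and represent each difference through $\Gamma$ after integrating by parts against $\nabla_{\G}f$, starting from $f(x)=\int_{\G}\Gamma(z^{-1}\circ x)\,\L f(z)\,dz$ localised to $B$. Combining the resulting kernels and absorbing the $w$-average, the expected outcome (this is exactly Jerison's estimate specialised to the group setting) is the pointwise bound
\[
|f(x)-f_{B}|\le C\int_{B}\frac{|\nabla_{\G}f(z)|}{d(x,z)^{Q-1}}\,dz=:C\,I_{1}\!\left(|\nabla_{\G}f|\,\chi_{B}\right)(x),
\]
valid for a.e.\ $x\in B$, with $C$ depending only on $\G$. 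The delicate point is to ensure that, after averaging over $w$, the effective kernel retains the homogeneity $d(x,z)^{1-Q}$ uniformly for $x,w\in B$, which uses the doubling property of the Haar measure together with the triangle inequality for $d$.

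The second step bounds the potential operator $I_{1}$ on $B$. Since $1-Q>-Q$, the kernel $d(\cdot,\cdot)^{1-Q}$ is locally integrable, and by the polar-coordinate formula on $\G$ together with homogeneity one has $\int_{B}d(x,z)^{1-Q}\,dz\le\int_{d(x,z)<2r}d(x,z)^{1-Q}\,dz=c_{Q}\,r$, uniformly in $x$. Hence, by Young's (Minkowski integral) inequality applied to the convolution-type operator $I_{1}$ restricted to $B$, I get $\|I_{1}g\|_{L^{p}(B)}\le c_{Q}\,r\,\|g\|_{L^{p}(B)}$ for every $1\le p<\infty$. Taking $g=|\nabla_{\G}f|$ and inserting the pointwise estimate yields \eqref{jer} with $P_{0}(r)=(Cc_{Q}\,r)^{p}$; the power $r^{p}$ is independently forced by testing the scale-covariance of \eqref{jer} under $\delta_{\lambda}$, since $\nabla_{\G}$ scales by $\lambda^{-1}$, the measure by $\lambda^{Q}$, and $f-f_{B}$ is invariant.

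The main obstacle is the pointwise sub-representation estimate. In the general H\"ormander framework this is the substantial part of Jerison's argument, requiring a lifting to a free nilpotent group and careful control of the sub-Riemannian geometry; in the present stratified setting it is more transparent because $\Gamma$ is explicit and exactly homogeneous, but one must still justify the integration by parts on the open set $B$ (controlling boundary contributions, e.g.\ by first establishing the estimate for $C_{0}^{\infty}$ data and then invoking an extension/approximation argument) and verify that the averaged kernel does not worsen the singularity. Once this estimate is secured, the remaining steps are routine.
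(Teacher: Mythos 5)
First, a remark on context: the paper does not prove Theorem \ref{thm.jerison} at all --- it is recalled verbatim from Jerison's 1986 paper \cite{Jer86} as a known external result, so there is no in-paper argument to compare yours against; the paper only \emph{uses} the statement (in Corollary \ref{cor.jer}) to contrast its own Poincar\'e-type bounds.

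Judged on its own terms, your proposal has a genuine gap at its core. The entire weight of the theorem is carried by the pointwise sub-representation estimate $|f(x)-f_{B}|\le C\int_{B}d(x,z)^{1-Q}|\nabla_{\G}f(z)|\,dz$, and you do not prove it: you describe it as ``the expected outcome'' of an integration by parts against the fundamental solution and then, correctly, flag it as the main obstacle. But this estimate is essentially equivalent in difficulty to the Poincar\'e inequality itself. Concretely: (i) the identity $f(x)=\int\Gamma(z^{-1}\circ x)\,\L f(z)\,dz$ holds for compactly supported $f$; for $f\in C^{\infty}(B)$ the integration by parts over $B$ produces boundary integrals of $f\,\nabla_{\G}\Gamma$ over $\partial B$ that cannot be absorbed into $\int_{B}|\nabla_{\G}f|^{p}$, and the ``extension/approximation argument'' you invoke to kill them is precisely where an extension theorem or a chaining argument for Carnot--Carath\'eodory balls would have to be supplied --- it is not. (ii) Even when a representation formula is established (e.g.\ by integrating $\nabla_{\G}f$ along sub-unit paths from $w$ to $x$ and averaging over $w$), the resulting potential is naturally supported on an enlarged ball $\sigma B$, $\sigma>1$; removing the enlargement so that the \emph{same} ball appears on both sides of \eqref{jer} requires showing that CC-balls are Boman/John domains and running a Whitney-chain argument, which is the substantive geometric content of Jerison's proof and is absent here. (iii) The theorem is stated for a general nilpotent Lie group with H\"ormander vector fields, where there need be no dilations, no homogeneous fundamental solution, and no exact homogeneity of degree $1-Q$ for $\nabla_{\G}\Gamma$; your argument, including the scaling derivation of $P_{0}(r)\sim r^{p}$, is confined to the stratified case. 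The second half of your argument --- the Schur/Young bound $\|I_{1}g\|_{L^{p}(B)}\le c_{Q}\,r\,\|g\|_{L^{p}(B)}$ deduced from $\sup_{x}\int_{B}d(x,z)^{1-Q}\,dz\le c_{Q}\,r$ --- is correct and routine, but it only becomes relevant once the representation estimate is actually established.
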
 
In the next theorem, we generalise Jerison's result in the case of $\H$ and when $p=2$, by allowing the ball $B_r(y)$ to be regarded with  respect to any homogeneous quasi-norm on $\H$ and the right-hand side of \eqref{jer} to be bounded  by the infimum of certain quantities that involve the (modified) fractional sub-Laplacian $\mathcal{L}$.
\begin{cor}\label{cor.jer}
Let $B_r(y)$ be the ball of radius $r$ centered at $y \in \H$ and with respect to some homogeneous quasi-norm on $\H$ such that $\text{Vol}(B_r(y)) \leq 1$. Then, we have
\begin{equation}\label{upper.b.j}
\int_{B_{r}(y)} |g(x)-g_{B_{r}(y)}|^{2}\,dx \leq M
\end{equation}
where 
\[
M=\inf_{s \in(0,1]}C_{B,s}\frac{Q}{2s}  \langle \mathcal{L}_sg,g \rangle_{L^2(B_r(y)}\,,
\]
where the constant $C_{B,s}$ is given in \eqref{bestfrac}, respectively. In particular, we have 
\[
\int_{B_{r}(y)} |g(x)-g_{B_{r}(y)}|^{2}\,dx \leq \frac{(n!)^{\frac{1}{n+1}}}{\pi n^{2}}\frac{Q}{2}\int_{B_r(y)}|\nabla_{\H} g(x)|^2\,dx\,,
\]
since $C_{B,1}=\frac{(n!)^{\frac{1}{n+1}}}{\pi n^{2}}$.
\end{cor}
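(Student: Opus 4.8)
The plan is to deduce the corollary directly from the ball version of the generalised Poincar\'e inequality, namely \eqref{Jer.comp.2}, specialised to the exponent $q=2$, and then to feed it through an elementary variance computation in which the normalisation hypothesis $\mathrm{Vol}(B_r(y))\le 1$ enters. First I would recall that \eqref{Jer.comp.2} reads, for every $q>1$ and every $s\in(0,1]$,
\[
\int_{B_r(y)}|g|^2\,dx-\left(\int_{B_r(y)}|g|^{\frac{2}{q}}\,dx\right)^q\le C_{B,s}\frac{Q(q-1)}{2s}\langle\Ls g,g\rangle_{L^2(B_r(y))},
\]
and take $q=2$, so that $q-1=1$ and the right-hand side collapses to exactly $C_{B,s}\frac{Q}{2s}\langle\Ls g,g\rangle_{L^2(B_r(y))}$, while the left-hand side becomes $\int_{B_r(y)}|g|^2\,dx-\bigl(\int_{B_r(y)}|g|\,dx\bigr)^2$. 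This is the Beckner-type quantity that the remark preceding the corollary proposes to compare with Jerison's mean-oscillation quantity.

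Second, I would connect the Jerison left-hand side $\int_{B_r(y)}|g-g_{B_r(y)}|^2\,dx$ to this quantity. Writing $V:=\mathrm{Vol}(B_r(y))$ and $g_{B_r(y)}=\frac1V\int_{B_r(y)}g\,dx$, expanding the square gives the identity
\[
\int_{B_r(y)}|g-g_{B_r(y)}|^2\,dx=\int_{B_r(y)}|g|^2\,dx-\frac1V\left|\int_{B_r(y)}g\,dx\right|^2,
\]
and since $V\le 1$ one has $\frac1V\ge 1$. The whole statement then reduces to the comparison $\int_{B_r(y)}|g-g_{B_r(y)}|^2\,dx\le\int_{B_r(y)}|g|^2\,dx-\bigl(\int_{B_r(y)}|g|\,dx\bigr)^2$. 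The hard part will be precisely this comparison: it is the only place the hypothesis $V\le 1$ is used, and it is delicate because it hinges on controlling $\bigl(\int_{B_r(y)}|g|\,dx\bigr)^2$ by $\frac1V\bigl|\int_{B_r(y)}g\,dx\bigr|^2$. It is immediate when $g\ge 0$, where $\bigl|\int g\bigr|=\int|g|$ and the estimate follows at once from $\frac1V\ge 1$; I expect this non-negativity (or an equivalent reduction) to be the genuine technical point to pin down.

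Third, chaining the identity and comparison of the previous paragraph with the $q=2$ case of \eqref{Jer.comp.2} yields, for each fixed $s\in(0,1]$,
\[
\int_{B_r(y)}|g-g_{B_r(y)}|^2\,dx\le C_{B,s}\frac{Q}{2s}\langle\Ls g,g\rangle_{L^2(B_r(y))}.
\]
Because the left-hand side does not depend on $s$, I would then take the infimum over $s\in(0,1]$ of the right-hand side, which is exactly the constant $M$ in \eqref{upper.b.j}. Finally, the stated ``in particular'' case is just the value $s=1$ of this family: inserting $C_{B,1}=\frac{(n!)^{1/(n+1)}}{\pi n^2}$ from \eqref{bestintsob} together with the identity $\langle\L g,g\rangle_{L^2(B_r(y))}=\|\na g\|^2_{L^2(B_r(y))}$ recorded in the Nash section recovers the homogeneous-quasi-norm analogue of Jerison's inequality \eqref{jer} for $p=2$, with the explicit constant $\frac{(n!)^{1/(n+1)}}{\pi n^2}\frac{Q}{2}$.
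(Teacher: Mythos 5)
Your proposal reproduces the paper's proof essentially step for step: expand the square to get the identity
\[
\int_{B_{r}(y)} |g-g_{B_{r}(y)}|^{2}\,dx=\int_{B_r(y)}|g|^2\,dx-\frac{1}{\mathrm{Vol}(B_r(y))}\left|\int_{B_r(y)}g\,dx\right|^2\,,
\]
use $\mathrm{Vol}(B_r(y))\le 1$ to drop the factor $\frac{1}{\mathrm{Vol}(B_r(y))}$, chain this with \eqref{Jer.comp.2} at $q=2$, take the infimum over $s\in(0,1]$, and specialise to $s=1$ using $\langle\L g,g\rangle_{L^2}=\|\na g\|^2_{L^2}$. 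That is exactly the paper's route, so on the level of strategy there is nothing to add.

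The step you single out as ``the hard part'' deserves comment, because the paper does \emph{not} resolve it either: it passes from $\int|g|^2-\frac{1}{V}\bigl|\int g\bigr|^2$ to $\int|g|^2-\bigl(\int|g|\bigr)^2$ with no justification beyond ``since $\mathrm{Vol}(B_r(y))\le 1$''. Your caution is warranted. The required comparison $\frac{1}{V}\bigl|\int g\bigr|^2\ge\bigl(\int|g|\bigr)^2$ does not follow from $V\le 1$ alone once $g$ changes sign: for $g$ odd on a symmetric ball one has $\int g=0$ while $\int|g|>0$, so the intermediate inequality fails. As you observe, the argument is clean for $g\ge 0$ (where $\bigl|\int g\bigr|=\int|g|$ and $V\le1$ suffices); for general $g$ one must either restrict to nonnegative functions, or apply \eqref{Jer.comp.2} to $|g|$ and invoke $|\na|g||\le|\na g|$ a.e.\ (which rescues the $s=1$ statement but is not obviously available for the fractional operators $\Ls$). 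In short: same approach as the paper, and the gap you flagged is a genuine one that the paper's own proof leaves open rather than one you have failed to see.
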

\begin{rem}
    Note that the constant $C_{B,s}$ depends only on the dimension of $\H$; i.e., it can be regarded as fixed. Hence the constant that appears in the upper bound for the quantity \eqref{upper.b.j} is independent of any involved parameters. 
\end{rem}
\begin{proof}[Proof of Corollary \ref{cor.jer}]
 Observe that the integral $I=\int_{B_{r}(y)} |g-g_{B_{r}(y)}|^{2}\,dx$ can be rewritten as 
   \begin{eqnarray}
\label{thmJer.b}
 	I & = &  \int_{B_r(y)} \left[g^2-\frac{2g}{\text{Vol}(B_r(y))} \int_{B_r(y)}g\,dx+\frac{1}{\text{Vol}^{2}(B_r(y))}\left(  \int_{B_r(y)}g\,dx \right)^2\right]\,dx \nonumber\\
 & = &  \int_{B_r(y)} g^2\,dx-\frac{2}{\text{Vol}(B_r(y))} \left(  \int_{B_r(y)}gdx\right)^2+ \frac{1}{\text{Vol}(B_r(y))} \left(  \int_{B_r(y)}g\,dx\right)^2 \nonumber\\
 & = &  \left[\int_{B_r(y)} g^2\,dx-\frac{1}{\text{Vol}(B_r(y))}\left(  \int_{B_r(y)}g\,dx\right)^2\right]\,.
   \end{eqnarray}
   Now, since $\text{Vol}(B_r(y)) \leq 1$,
   we get 
   \[
  \int_{B_{r}(y)} |g(x)-g_{B_{r}(y)}|^{2}\,dx\leq  \int_{B_r(y)}|g|^2\,dx-\left( \int_{B_r(y)}|g|\,dx\right)^2\,.
   \]   
   Combining the latter with \eqref{Jer.comp.2} for $q=2$ gives the desired result, and the proof is complete. 
\begin{rem}
Let us note that the results of these sections can be generalised in the setting of other stratified groups, or even graded groups, whenever the latter makes sense. However, in this case, we will only have representations of the involved constants in terms of ground state solutions to relevant nonlinear PDEs. Precisely, Theorem \ref{thm.poi.H} is true for all stratified groups due to Theorem 7.2 in \cite{CKR21c}, and Theorem 8.2 can be stated for all graded Lie groups thanks to Theorem 4.3 in \cite{CKR21c}. The details of these results will appear elsewhere.
\end{rem}
    
\end{proof}

\end{document}